\theoremstyle{plain}
\newtheorem{thm}{Theorem}[section]
\newtheorem{lem}[thm]{Lemma}
\newtheorem{prop}[thm]{Proposition}
\theoremstyle{definition}
\newtheorem{rem}[thm]{Remark}
\newtheorem{defi}[thm]{Definition}
\newtheorem{conv}[thm]{Convention}
\numberwithin{equation}{section}
\def\esup{\operatornamewithlimits{ess\,sup}}
\def\Id{\operatorname{I}}
\def\ces{\operatorname{Ces}}
\def\cop{\operatorname{Cop}}
\def\Ces{\operatorname{ces}}
\def\Cop{\operatorname{cop}}
\def\ap{\approx}
\def\qq{\qquad}
\def\rw{\rightarrow}
\def\ls{\lesssim}
\def\hra{\hookrightarrow}
\def\M{\mathcal M}
\def\m{\mathcal M}
\def\a{\alpha}
\def\b{\beta}
\def\la{\lambda}
\def\vp{\varphi}
\def\i{\infty}
\def\I{(0,\i)}
\def\N{\mathbb N}
\def\R{\mathbb R}
\def\Z{\mathbb Z}
\def\R{\mathbb R}
\def\M{\mathfrak M}
\def\W{{\mathcal W}}
\def\mp{{\mathfrak M}}
\def\a{\alpha}
\def\b{\beta}
\def\O{\Omega}
\def\la{\lambda}
\def\vp{\varphi}
\def\i{\infty}
\def\I{(0,\i)}
\def\ga{{\gamma}}
\def\dual{\,^{^{\bf c}}\!}
\begin{document}

\title{Pointwise Multipliers between weighted Copson and Ces\`{a}ro function spaces}

\author[]{A.Gogatishvili, R.Ch.~Mustafayev, T.~{\"U}nver}

\thanks{The research of A. Gogatishvili was partly supported by the grants P201-13-14743S of the Grant Agency of the Czech Republic and RVO: 67985840, by Shota Rustaveli National Science Foundation grants no. 31/48 (Operators in some function spaces and their applications in Fourier Analysis) and no. DI/9/5-100/13 (Function spaces, weighted inequalities for integral operators and problems of summability of Fourier series). The research of all authors was partly supported by the joint project between  Academy of Sciences of Czech Republic and The Scientific and Technological Research Council of Turkey}

\subjclass[2010]{Primary 26D10; Secondary 26D15.}

\keywords{Ces\`{a}ro and Copson function spaces, embeddings, iterated Hardy inequalities, weights.}

\begin{abstract}
In this paper the solution of the pointwise multiplier problem between weighted Copson function spaces  $\operatorname{Cop}_{p_1,q_1}(u_1,v_1)$ and weighted Ces\`{a}ro function spaces $\operatorname{Ces}_{p_2,q_2}(u_2,v_2)$ is presented, where $p_1,\,p_2,\,q_1,\,q_2 \in (0,\infty)$, $p_2 \le q_2$ and $u_1,\,u_2,\,v_1,\,v_2$ are weights on $(0,\infty)$.
\end{abstract}

\maketitle

\

\section{Introduction}\label{introduction}

\

Various properties of the different spaces defined by the Ces\'{a}ro operator have been studied extensively in the literature. The Ces\`{a}ro sequence spaces $\Ces_p$ and the Ces\`{a}ro function spaces $\ces_p$ have been introduced by Shiue in 1970 in \cite{shiue1} and \cite{shiue}, respectively. In 1971 Leibowitz proved that $\Ces_1 = \{0\}$ and for $1 < q < p \leq \infty$, $\ell_p$ and $\Ces_q$ sequence spaces are proper subspaces of $\Ces_p$ \cite{Leibowitz}. Jagers has obtained the associate space for $\Ces_p$, $(1 < p < \infty)$ \cite{jagers}. In \cite{syzhanglee}, Sy, Zhang and Lee gave a description of dual spaces of $\ces(p)$ spaces based on Jagers' result. In 1996 different, isomorphic description due to Bennett appeared in \cite{bennett1996}. In \cite[Theorem 21.1]{bennett1996} Bennett observes that the classical Ces\`{a}ro function space and the classical Copson function space coincide for $p > 1$. He also derives estimates for the norms of the corresponding inclusion operators. The same result, with different estimates, is due to Boas \cite{boas1970}, who in fact obtained the integral analogue of the Askey-Boas Theorem \cite[Lemma 6.18]{boas1967} and \cite{askeyboas}. These results generalized in \cite{grosse} using the blocking technique. In \cite{astasmal2009} they investigated dual spaces for $\ces (p)$ for $1 < p < \infty$. Their description can be viewed as being analogous to one given for sequence spaces in \cite{bennett1996} (For more detailed information about history of classical Ces\`{a}ro spaces see recent survey paper \cite{asmalsurvey}). 

In this paper, we will describe the pointwise multiplier spaces of weighted Copson and Ces\'{a}ro function spaces. The weighted Ces\`{a}ro and Copson function spaces are defined in \cite{gmu} as follows:
\begin{defi}\label{defi.2.1}
Let  $0 <p, q \le \infty$, $u \in \mp^+ \I$, $v\in \W\I$. The weighted Ces\`{a}ro and Copson spaces are defined by
\begin{align*}
\ces_{p,q} (u,v) : & = \bigg\{ f \in \mp^+ \I: \|f\|_{\ces_{p,q}(u,v)} : = \big\| \|f\|_{p,v,(0,\cdot)} \big\|_{q,u,\I} < \i \bigg\}, \\
\intertext{and} \cop_{p,q} (u,v) : & = \bigg\{ f \in \mp^+ \I: \|f\|_{\cop_{p,q} (u,v)} : = \big\| \|f\|_{p,v,(\cdot,\i)} \big\|_{q,u,\I} < \i \bigg\},
\end{align*}
respectively.
\end{defi}

\begin{defi}
Let $0<q\leq \i$. We denote by $\O_q$ the set of all functions $u \in \mp^+ \I$ such that
$$
0<\|u\|_{q,(t,\i)}<\i,~~ t>0,
$$
and by $\dual{\O}_q$ the set of all functions $u \in \mp^+ \I$ such that
$$
0<\|u\|_{q,(0,t)}<\i,~~ t>0.
$$
\end{defi}

Let $v \in \W\I$. It is easy to see that $\ces_{p,q} (u,v)$ and $\cop_{p,q} (u,v)$ are quasi-normed vector spaces when $u \in \O_q$ and $u \in \dual{\O}_q$, respectively.

Note that the function spaces $C$ and $D$ defined by Grosse-Erdmann in \cite{grosse} are related with this definition in the following way:
$$
\ces_{p,q}(u,v) = C(p,q,u)_v \qq \mbox{and} \qq \cop_{p,q}(u,v) = D(p,q,u)_v.
$$

We use the notations $\ces_p(u) : = \ces_{1,p}(u,{\bf 1})$ and $\cop_p(u) : = \cop_{1,p}(u,{\bf 1})$. Obviously, $\ces(p) = \ces_p (x^{-1})$ and $\cop(p) = \cop_p (x^{-1})$. In \cite{kamkub}, Kami{\'n}ska and Kubiak computed the dual norm of the Ces\`{a}ro function space $\ces_{p}(u)$, generated by $1 < p < \infty$ and an arbitrary positive weight $u$. A description presented in \cite{kamkub} resembles the approach of Jagers \cite{jagers} for sequence spaces.  

Let $X$ and $Y$ be a (quasi)-Banach spaces of Borel measurable functions on $\I$. A {\it multiplier} from a space $X$ into a space $Y$ is a function $f$ such that $f \cdot g \in Y$ for all $g \in X$. In particular, the K\"{o}the dual $X'$ of $X$ is defined as the space $\m (X, L_1)$ of multipliers into $L_1$. The (linear) space of all such multipliers is denoted by $\m(X,Y)$, that is,
$$
\m (X,Y) : = \{f:~ fg \in Y~\mbox{for all}~g \in X\}.
$$
The $\m (X,Y)$ becomes a quasi-normed space with the quantity
$$
\|f\|_{\m(X,Y)} : = \sup_{g \not\sim 0} \frac{\|fg\|_{Y}}{\|g\|_X}.
$$
Given $f \in \W(0,\infty)$, if we define a weighted space by $Y_f = \{g:\, f \cdot g \in Y\}$, then
$$
\|f\|_{\m (X,Y)} = \sup_{g \not \sim  0} \frac{\|g\|_{Y_f}}{\|g\|_X} = \|I\|_{X \rw Y_f},
$$
and we see that $\|f\|_{\m (X,Y)} < \infty$ if and only if $\|I\|_{X \rw Y_f} < \infty$.

The multiplier problem between $\ell_s(w)$ and one of the $\Ces_{p,q}(a,b)$  and $\Cop_{p,q}(a,b)$ spaces was considered by Grosse-Erdmann \cite{grosse}. It is mentioned in \cite[p. 30]{grosse} that multipliers between two spaces of type $\Ces_{p,q}(a,b)$  and $\Cop_{p,q}(a,b)$ are more difficult to treat.

In \cite{gmu} the embeddings between the Copson and Ces\`{a}ro function spaces and vice versa, that is, the embeddings
\begin{align}
\cop_{p_1,q_1}(u_1,v_1) & \hra \ces_{p_2,q_2}(u_2,v_2), \label{mainemb1}\\
\ces_{p_2,q_2}(u_2,v_2) & \hra \cop_{p_1,q_1}(u_1,v_1). \label{mainemb2}
\end{align}
were characterized. Using these characterizations we give the solution  to the multiplier problem between weighted Ces\`{a}ro and Copson function spaces.

It is easy to see that the problem of characterization of the space of multipliers between $\cop_{p_1,q_1}(u_1,v_1)$ and $\ces_{p_2,q_2}(u_2,v_2)$ can be  
reduced to the characterization of the multiplier problem with three parameters and three weight functions (see, Proposition \ref{prop.reduc}). So we will concentrate our attention on calculation of $M(\cop_{r}(u),\ces_{p,q}(w,v))$.

In order to describe the pointwise multipliers, we need to introduce the following weighted Ces\`{a}ro-type spaces with three parameters:
\begin{defi}\label{defi.2.2}
	Let  $0 <p, \, q, \,r \le \infty$, $u \in \mp^+ \I$, $v,\,w\in \W\I$. The weighted Ces\`{a}ro and Copson type spaces are defined by
	\begin{align*}
	\ces_{p,q,r} (u,v,w) : & = \bigg\{ f \in \mp^+ \I: \|f\|_{\ces_{p,q,r}(u,v,w)} : = \bigg\| \big\| \|f\|_{p,w,(0,\cdot)} \big\|_{q,v,(0,\cdot)} \bigg\|_{r,u,\I} < \i \bigg\}, \\
	\intertext{and} 	\cop_{p,q,r} (u,v,w) : & = \bigg\{ f \in \mp^+ \I: \|f\|_{\cop_{p,q,r}(u,v,w)} : = \bigg\| \big\| \|f\|_{p,w,(\cdot,\infty)} \big\|_{q,v,(\cdot,\infty)} \bigg\|_{r,u,\I} < \infty \bigg\},
	\end{align*}
	respectively.
\end{defi}

Throughout the paper we assume that $I : = (a,b)\subseteq
(0,\infty)$. By ${\mathfrak M} (I)$ we denote the set of all
measurable functions on $I$. The symbol ${\mathfrak M}^+ (I)$ stands
for the collection of all $f\in{\mathfrak M} (I)$ which are
non-negative on $I$, while ${\mathfrak M}^+ (I;\downarrow)$ and
${\mathfrak M}^+ (I;\uparrow)$ are used to denote the subset of
those functions which are non-increasing and non-decreasing on $I$,
respectively. When $I = (0,\infty)$, we write simply ${\mathfrak M}^+$,
${\mathfrak M}^{\downarrow}$ and ${\mathfrak M}^{\uparrow}$ instead of ${\mathfrak M}^+ (I)$,
${\mathfrak M}^+ (I;\downarrow)$ and ${\mathfrak M}^+ (I;\uparrow)$,
accordingly. The family of all weight functions (also called just weights) on $I$, that is, measurable, positive and finite a.e. on $I$, is given by $\W (I)$.  

For $p\in (0,\i]$, we define the functional $\|\cdot\|_{p,I}$ on $\mp (I)$ by 
\begin{equation*}
\|f\|_{p,I} : = \left\{
\begin{array}{cl}
	\bigg(\int_I |f(x)|^p \,dx \bigg)^{\frac{1}{p}} & \qq\mbox{if}\qq p<\i, \\
	\esup_{I} |f(x)| & \qq\mbox{if}\qq p=\i.
\end{array}
\right.
\end{equation*}

If $w\in \W(I)$, then the weighted Lebesgue space $L_p(w,I)$ is given by
\begin{equation*}
L_p(w,I) \equiv L_{p,w}(I) : = \{f\in \mp (I):\,\, \|f\|_{p,w,I} : =
\|fw\|_{p,I} < \i\},
\end{equation*}
and it is equipped with the quasi-norm $\|\cdot\|_{p,w,I}$. When $I = \I$, we often write simply $L_{p,w}$ and $L_p(w)$ instead of $L_{p,w}(I)$ and $L_p(w,I)$, respectively.

We adopt the following usual conventions.
\begin{conv}\label{Notat.and.prelim.conv.1.1}
{\rm (i)} Throughout the paper we put $0/0 = 0$, $0 \cdot (\pm \i) = 0$ and $1 / (\pm\i) =0$.

{\rm (ii)} We put
$$
p' : = \left\{\begin{array}{cl} \frac p{1-p} & \text{if} \quad 0<p<1,\\
\infty &\text{if}\quad p=1, \\
\frac p{p-1}  &\text{if}\quad 1<p<\infty,\\
1  &\text{if}\quad p=\infty.
\end{array}
\right.
$$

{\rm (iii)} If $I = (a,b) \subseteq \R$ and $g$ is a monotone function on $I$, then by $g(a)$ and $g(b)$ we mean the limits $\lim_{x\rw a+}g(x)$ and $\lim_{x\rw b-}g(x)$, respectively.
\end{conv}

To state our results we use the notation $p \rw q$ for $0 < p,\,q \le \infty$ defined by
$$
\frac{1}{p \rw q} = \frac{1}{q} - \frac{1}{p} \qq \mbox{if} \qq q <
p,
$$
and $p \rw q = \infty$ if $q \ge p$ (see, for instance, \cite[p.30]{grosse}).

Throughout the paper, we always denote by $c$ and $C$ a positive constant, which is independent of main parameters but it may vary from line to line. However a constant with subscript or superscript such as $c_1$ does not change in different occurrences. By $a\lesssim b$, ($b\gtrsim a$) we mean that $a\leq \la b$, where $\la>0$ depends on inessential parameters. If $a\lesssim b$ and $b\lesssim a$, we write $a\approx b$ and say that $a$ and $b$ are equivalent.  We will denote by $\bf 1$ the function ${\bf 1}(x) = 1$, $x \in \R$.  Given two quasi-normed vector spaces $X$ and $Y$, we write $X=Y$ if $X$ and $Y$ are equal in the algebraic and the topological sense (their quasi-norms are equivalent). The symbol $X\hookrightarrow Y$ ($Y \hookleftarrow X$) means that $X\subset Y$ and the natural embedding $\Id$ of $X$ in $Y$ is continuous, that is, there exist a constant $c > 0$ such that $\|z\|_Y \le c\|z\|_X$ for all $z\in X$. The best constant of the embedding $X\hookrightarrow Y$ is $\|\Id\|_{X \rw Y}$.

The paper is organized as follows. Some new "gluing" lemmas are presented in Section \ref{gluing}. The characterization of spaces of multipliers between weighted Ces\`{a}ro and Copson function spaces are given in Section \ref{multipliers}.


\section{Some new "gluing" lemmas}\label{gluing}

In this section, we present some generalizations of "gluing" lemma, which is very useful  and has independent interest (cf. \cite[Lemma 2.2]{gkp_2009} and \cite[Theorem 3.1]{gogperstepwall}).

Recall that, if $F \in {\mathfrak M}^+ (\I;\downarrow)$, then
\begin{equation}\label{Fubini.1}
\esup_{t \in (0,\infty)} F(t)G(t) = \esup_{t \in (0,\infty)} F(t)
\esup_{\tau \in (0,t)} G(\tau);	
\end{equation}
likewise, when $F \in {\mathfrak M}^+ (\I;\uparrow)$, then
\begin{equation}\label{Fubini.2}	
\esup_{t \in (0,\infty)} F(t)G(t) = \esup_{t \in (0,\infty)} F(t)
\esup_{\tau \in (t,\infty)} G(\tau)
\end{equation}
(see, for instance, \cite[p. 85]{gp_2006}).

Given $a \in {\mathfrak M}^+ (\I;\uparrow)$ denote by
$$
{\mathcal A}(x,t) : = \frac{a(x)}{a(x) + a(t)} \qquad (x > 0,\, t > 0).
$$
Observe that
$$
{\mathcal A}(x,t) \approx \min \bigg\{ 1,\frac{a(x)}{a(t)} \bigg\}.
$$	
Moreover,
\begin{equation}\label{prop.1}
a(t){\mathcal A}(x,t) = a(x){\mathcal A}(t,x).
\end{equation}

We say that a function $f$ is $a$-quasiconcave if $f$ is equivalent to 
an increasing function on $(0,\infty)$ and $f / a$ is equivalent to a decreasing function on
$(0,\infty)$. It is easy to see that ${\mathcal A}(x,t)$ is $a$-quasiconcave function of $x$ for any fixed $t > 0$.

It have been shown in \cite[p. 85]{gp_2006} that the relations
\begin{align}
\esup_{t \in (0,\infty)}{\mathcal A}(x,t) g(t) & \approx \esup_{t \in (0,\infty)} g(t) \min \bigg\{ 1,\frac{a(x)}{a(t)} \bigg\} \notag \\
& = \esup_{t \in (0,x)} a(t) \esup_{\tau \in (t,\infty)} \frac{g(\tau)}{a(\tau)} \notag \\
& = a(x) \esup_{t \in (x,\infty)} \frac{1}{a(t)} \esup_{\tau \in (0,t)} g(\tau)   \label{Fubini.3}
\end{align}
holds for any $g \in {\mathfrak M}^+ (0,\infty)$ and $a \in {\mathfrak M}^+ (\I;\uparrow)$. Consequently,  $\esup_{t \in (0,\infty)}{\mathcal A}(x,t) g(t)$ is $a$-quasiconcave function. 

\begin{lem}\label{gluing.lem.1}
Let $a \in \W\I$ be non-decreasing and $g,\,h \in \M^+\I$. Then
\begin{align*}
\esup_{x \in (0,\infty)} \bigg( \esup_{t \in (0,\infty)} {\mathcal A}(x,t) g(t) \bigg) \bigg( \esup_{t\in \I} {\mathcal A}(t,x) h(t)\bigg) & \\
& \hspace{-5.5cm} \ap \esup_{x \in (0,\infty)} g(x) \bigg( \esup_{t \in (x,\infty)} h(t) \bigg)  \\
& \hspace{-5cm} + \esup_{x \in (0,\infty)} a(x)^{-1} g(x) \bigg(\esup_{t\in (0,x)} a(t) h(t)\bigg).
\end{align*}
\end{lem}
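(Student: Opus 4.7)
The plan is to decompose each inner essential supremum by splitting the range of $t$ at the crossover point $t = x$, using ${\mathcal A}(x,t) \approx \min\{1, a(x)/a(t)\}$. Setting
\begin{align*}
P(x) & := \esup_{t \in (0,x)} g(t), & Q(x) & := a(x) \esup_{t \in (x,\infty)} \frac{g(t)}{a(t)}, \\
M(x) & := \esup_{t \in (x,\infty)} h(t), & N(x) & := \frac{1}{a(x)} \esup_{t \in (0,x)} a(t) h(t),
\end{align*}
and exploiting $\max\{A,B\} \approx A + B$ for non-negative quantities, I would obtain
$$
\esup_{t \in \I} {\mathcal A}(x,t) g(t) \approx P(x) + Q(x), \qquad \esup_{t \in \I} {\mathcal A}(t,x) h(t) \approx M(x) + N(x).
$$
Denote the right-hand side of the lemma by $R_1 + R_2$. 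The claim reduces to $\esup_{x} (P+Q)(M+N) \approx R_1 + R_2$.

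For the lower bound $\gtrsim$, I would test the left-hand side pointwise: since ${\mathcal A}(x,x) = 1/2$ we have $\esup_t {\mathcal A}(x,t) g(t) \gtrsim g(x)$, while restricting the second supremum to $t > x$ (where ${\mathcal A}(t,x) \ge 1/2$) gives $\gtrsim M(x)$, and restricting to $t < x$ (where ${\mathcal A}(t,x) \approx a(t)/a(x)$) gives $\gtrsim N(x)$. Taking $\esup_x$ on these two product bounds recovers $R_1$ and $R_2$ separately.

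For the upper bound $\lesssim$, expand $(P+Q)(M+N)$ into four summands and bound $\esup_x$ of each by $R_1 + R_2$. The strategy is to swap the order of suprema and then use the monotonicity of $a$ to evaluate $\esup_x a(x)$ or $\esup_x 1/a(x)$ over the admissible range of $x$ at the appropriate endpoint. The pure products collapse at once: one checks $\esup_x P(x) M(x) \approx R_1$ (where after swapping, the $x$ becomes free in a trivial interval) and $\esup_x Q(x) N(x) \approx R_2$ (where the factor $a(x)$ in $Q$ cancels against $1/a(x)$ in $N$).

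The main obstacle is the two mixed cross terms. For $\esup_x Q(x) M(x)$, swapping the triple of suprema leads to
$$
\esup_{t,\,s} a(\min(t,s))\, \frac{g(t)\, h(s)}{a(t)},
$$
and splitting into $s > t$ versus $s \le t$ yields $R_1$ and $R_2$ respectively. Dually, $\esup_x P(x) N(x)$ reduces to a supremum of $g(t)\, a(s)\, h(s)/a(\max(t,s))$, which again splits into two sub-cases giving $R_1$ and $R_2$. Summing the four resulting estimates closes the upper bound. The bookkeeping in these two mixed cases --- correctly identifying which sub-case produces $R_1$ and which produces $R_2$, and confirming that the $a$-factors collapse in the right direction --- is the most delicate part of the argument.
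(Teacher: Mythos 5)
Your argument is correct, but it follows a genuinely different route from the paper's. The paper does not expand both factors: it first collapses the \emph{first} factor to $g(x)$ under the outer supremum, i.e. it proves $\esup_{x}\big(\esup_{t}{\mathcal A}(x,t)g(t)\big)F(x)\approx\esup_{x}g(x)F(x)$ with $F(x)=\esup_{t}{\mathcal A}(t,x)h(t)$, by writing $\esup_{t}{\mathcal A}(x,t)g(t)$ as the iterated supremum in \eqref{Fubini.3} and interchanging suprema via \eqref{Fubini.1}, \eqref{prop.1} and \eqref{Fubini.2}; only then does it split $F(x)\approx a(x)^{-1}\esup_{t<x}a(t)h(t)+\esup_{t>x}h(t)$. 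This sidesteps your four-term expansion and in particular the two mixed cross terms entirely. Your decomposition of both factors into $P+Q$ and $M+N$, with the pure products $PM$, $QN$ giving $R_1$, $R_2$ exactly (indeed $\esup_x PM=R_1$ and $\esup_x QN=R_2$ by \eqref{Fubini.1} and \eqref{Fubini.2}) and the cross terms $QM$, $PN$ each splitting along $s\gtrless t$ into contributions to $R_1$ and $R_2$, is sound — I checked that the $a$-factors collapse in the right direction, since $\sup_{x<\min(t,s)}a(x)\le a(\min(t,s))$ and $\sup_{x>\max(t,s)}a(x)^{-1}\le a(\max(t,s))^{-1}$ by monotonicity. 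What each approach buys: the paper's reduction is shorter here, leaning on the prefabricated identities \eqref{Fubini.1}--\eqref{Fubini.3}; yours is self-contained and is in fact the strategy the paper itself is forced to use for the integral analogues (Lemmas \ref{gluing.lem.0} and \ref{gluing.lem.4}), where the sup-interchange trick is unavailable and the cross terms are handled by discretization — in the all-suprema case no discretization is needed, as you show. One small caution: the pointwise statements in your lower bound, such as $\esup_{t}{\mathcal A}(x,t)g(t)\gtrsim g(x)$ read at the single point $t=x$, are not literally valid for an essential supremum; they should be replaced by $\gtrsim\esup_{t<x}g(t)=P(x)$ (using ${\mathcal A}(x,t)\ge 1/2$ for $t<x$) and then closed with \eqref{Fubini.1}--\eqref{Fubini.2} as above, which is exactly the $PM$ and $QN$ computation you already have.
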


\begin{proof}
Using the second relation in \eqref{Fubini.3}, by \eqref{Fubini.1}, \eqref{prop.1}, and \eqref{Fubini.2}, we obtain that
\begin{align*}
\esup_{x \in (0,\infty)} \bigg( \esup_{t \in (0,\infty)} {\mathcal A}(x,t)g(t) \bigg) \bigg(\esup_{t\in \I} {\mathcal A}(t,x) h(t)\bigg)  & \\
& \hspace{-5cm } \approx \esup_{x \in (0,\infty)} g(x)  \bigg(\esup_{t\in \I} {\mathcal A}(t,x) h(t)\bigg). 
\end{align*}
It remains to observe that
$$
\esup_{t\in \I} {\mathcal A}(t,x) h(t) \approx a(x)^{-1} \esup_{t \in (0,x)} a(t)h(t) + \esup_{t \in (x,\infty)} h(t).
$$
\end{proof}

Assume that $g \in {\mathfrak M}^+ (0,\infty)$, $a \in {\mathfrak M}^+ (\I;\uparrow)$ and $0 < \beta < \infty$. Recall that the function 
$$
\bigg( \int_0^{\infty}  {\mathcal A}(x,t)^{\beta}g(t)\,dt \bigg)^{\frac{1}{\beta}}
$$
is $a$-quasiconcave function, as well (see, for instance, \cite[p. 318]{gp_2003}).

\begin{lem}\label{gluing.lem.2}
Let $\b$ be a positive number. Suppose that $g,\,h \in \M^+\I$ and $a \in \W\I$ is non-decreasing. Then
\begin{align*}
\esup_{x \in (0,\infty)} \bigg( \esup_{t \in (0,\infty)} {\mathcal A}(x,t)g(t) \bigg) \bigg( \int_0^{\infty}  {\mathcal A}(t,x)^{\beta}h(t)\,dt \bigg)^{\frac{1}{\beta}} & \\
& \hspace{-5.5cm} \ap \esup_{x \in (0,\infty)} g(x) \bigg( \int_x^{\infty} h(t)\,dt\bigg)^{\frac{1}{\b}}
+ \esup_{x \in (0,\infty)} a(x)^{-1}g(x)\bigg( \int_0^x a(t)^{\b} h(t)\,dt\bigg)^{\frac{1}{\b}}.
\end{align*}
\end{lem}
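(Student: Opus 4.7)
The plan is to follow the two-step pattern of Lemma \ref{gluing.lem.1}, the only change being that the inner essential supremum in the second factor is replaced by an $L^{\beta}$-integral. Set
$$
G(x) := \bigg(\int_0^{\infty} {\mathcal A}(t,x)^{\beta} h(t)\,dt\bigg)^{1/\beta},
$$
which, as recalled just before the statement, is $a$-quasiconcave: up to equivalence, $G$ is non-increasing and $a \cdot G$ is non-decreasing. Step~1 will reduce the left-hand side to $\esup_{x} g(x) G(x)$; Step~2 will replace $G(x)$ by the two summands appearing on the right-hand side.

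For Step~1, I claim that
$$
\esup_{x \in \I} \bigg(\esup_{t \in \I} {\mathcal A}(x,t) g(t)\bigg) G(x) \approx \esup_{x \in \I} g(x) G(x),
$$
and I will prove this exactly as in Lemma \ref{gluing.lem.1}, because that argument uses only the $a$-quasiconcavity of the second factor, not its specific form. The lower bound follows from ${\mathcal A}(x,x) = 1/2$. For the upper bound, rewrite $\esup_{t} {\mathcal A}(x,t) g(t)$ via the second identity in \eqref{Fubini.3} as $\esup_{t < x} a(t) Q(t)$ with $Q(t) := \esup_{\tau > t} g(\tau)/a(\tau)$, and then swap the outer essential suprema twice. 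Each swap uses \eqref{Fubini.1} or \eqref{Fubini.2} together with the monotonicity properties of $G$ and $a \cdot G$: first to replace $\esup_{x > t} G(x)$ by $G(t)$, and then, after factoring $a(t) Q(t) G(t) = Q(t)\,[a(t)G(t)]$, to replace $\esup_{t < \tau} a(t) G(t)$ by $a(\tau) G(\tau)$.

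For Step~2, split $G(x)^{\beta}$ as $\int_0^x + \int_x^{\infty}$. Because $a$ is non-decreasing, one has ${\mathcal A}(t,x) \approx a(t)/a(x)$ on $(0,x)$ and ${\mathcal A}(t,x) \approx 1$ on $(x,\infty)$, hence
$$
G(x) \approx a(x)^{-1} \bigg(\int_0^x a(t)^{\beta} h(t)\,dt\bigg)^{1/\beta} + \bigg(\int_x^{\infty} h(t)\,dt\bigg)^{1/\beta}.
$$
Inserting this equivalence into the conclusion of Step~1 yields exactly the two summands on the right-hand side of the claim. I do not anticipate any genuine obstacle: Step~2 is a routine pointwise splitting, and Step~1 is a direct transcription of the argument of Lemma \ref{gluing.lem.1}, relying only on the $a$-quasiconcavity of $G$ that was already observed in the paragraph preceding the statement.
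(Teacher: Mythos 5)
Your proof is correct and follows the paper's own argument essentially verbatim: the paper likewise first reduces the left-hand side to $\esup_{x}g(x)G(x)$ using the second relation in \eqref{Fubini.3} together with \eqref{Fubini.1}, \eqref{prop.1} and \eqref{Fubini.2}, and then concludes with the same pointwise splitting of $G$. (One terminological quibble: by the paper's definition it is $a\cdot G$, not $G$, that is $a$-quasiconcave --- the recalled fact transfers to your $G$ via \eqref{prop.1} --- but the monotonicity properties you actually invoke, $G$ non-increasing and $a\cdot G$ non-decreasing, are exactly the right ones.)
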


\begin{proof}
Using the second relation in \eqref{Fubini.3}, by \eqref{Fubini.1}, \eqref{prop.1}, and \eqref{Fubini.2}, we obtain that	
\begin{align*}
\esup_{x \in (0,\infty)} \bigg( \esup_{t \in (0,\infty)} {\mathcal A}(x,t)g(t) \bigg) \bigg( \int_0^{\infty}  {\mathcal A}(t,x)^{\beta}h(t)\,dt \bigg)^{\frac{1}{\beta}}  & \\
& \hspace{-5cm } \approx \esup_{x \in (0,\infty)} g(x) \bigg( \int_0^{\infty}  {\mathcal A}(t,x)^{\beta}h(t)\,dt \bigg)^{\frac{1}{\beta}}. 
\end{align*}
To complete the proof it remains to observe that
$$
\bigg( \int_0^{\infty}  {\mathcal A}(t,x)^{\beta}h(t)\,dt \bigg)^{\frac{1}{\beta}}  \approx a(x)^{-1}\bigg( \int_0^x a(t)^{\beta} h(t) \,dt \bigg)^{\frac{1}{\beta}} + \bigg( \int_x^{\infty} h(t) \,dt \bigg)^{\frac{1}{\beta}}. 
$$
\end{proof}

\begin{lem}\label{gluing.lem.3}
Let $\b$ be a positive number. Suppose that $g,\,h \in \M^+\I$ and $a \in \W\I$ is non-decreasing.
Then
\begin{align*}
\esup_{x \in (0,\infty)} \bigg( \int_0^{\infty}  {\mathcal A}(x,t)^{\beta}g(t)\,dt \bigg)^{\frac{1}{\beta}} \bigg( \esup_{t \in (0,\infty)} {\mathcal A}(t,x)h(t) \bigg) & \\
& \hspace{-5.5cm} \ap \esup_{x \in (0,\infty)} h(x)\bigg( \int_0^x g(t)\,dt\bigg)^{\frac{1}{\b}} + \esup_{x \in (0,\infty)} a(x)h(x) \bigg( \int_x^{\infty} a(t)^{-\b} g(t)\,dt\bigg)^{\frac{1}{\b}}.
\end{align*}
\end{lem}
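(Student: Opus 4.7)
The plan is to mirror the two-step strategy used in Lemmas \ref{gluing.lem.1} and \ref{gluing.lem.2}: first, reduce the product of the two "composite" factors to a product where the $\esup$-type factor is replaced by the plain weight $h$; second, substitute the known pointwise expansion of the integral-type factor and distribute.

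Set $G(x):=\bigl(\int_0^\infty \mathcal{A}(x,t)^\beta g(t)\,dt\bigr)^{1/\beta}$ and $H(x):=\esup_{t\in\I}\mathcal{A}(t,x) h(t)$. The first step is to establish
\[
\esup_{x\in\I} G(x) H(x)\;\approx\;\esup_{x\in\I} G(x) h(x).
\]
The lower bound ``$\gtrsim$'' is immediate because $H(x)\ge \mathcal{A}(x,x) h(x)=\tfrac12 h(x)$. For the upper bound I would expand $H$ by definition and interchange the suprema, which gives
\[
\esup_{x\in\I} G(x) H(x)=\esup_{t\in\I} h(t)\,\esup_{x\in\I} G(x)\mathcal{A}(t,x).
\]
Then I would use that $G$ is $a$-quasiconcave (cited right before the statement): $G$ is equivalent to an increasing function and $G/a$ is equivalent to a decreasing function. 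Since $\mathcal{A}(t,x)\approx\min\{1,a(t)/a(x)\}$, splitting the inner sup at $x=t$ gives $\esup_{x<t} G(x)\approx G(t)$ and $a(t)\esup_{x>t}G(x)/a(x)\approx a(t)\cdot G(t)/a(t)=G(t)$, so $\esup_{x\in\I}G(x)\mathcal{A}(t,x)\approx G(t)$. Alternatively, this reduction can be carried out entirely through the chain \eqref{Fubini.1}, \eqref{Fubini.2}, \eqref{prop.1}, together with the (variable-swapped) second relation in \eqref{Fubini.3}, exactly as in the proofs of the previous two gluing lemmas.

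The second step is the explicit expansion of $G$. Splitting $\mathcal{A}(x,t)^\beta\approx\min\{1,a(x)^\beta/a(t)^\beta\}$ at $t=x$ yields
\[
G(x)\;\approx\;\Bigl(\int_0^x g(t)\,dt\Bigr)^{1/\beta}+a(x)\Bigl(\int_x^\infty a(t)^{-\beta}g(t)\,dt\Bigr)^{1/\beta},
\]
which is recorded in the text right before Lemma \ref{gluing.lem.2}. Substituting this into $\esup_{x\in\I}h(x) G(x)$ and distributing the $\esup$ over the two summands produces precisely the two terms on the right-hand side of the statement.

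I expect the only delicate point to be the upper bound in the first step, specifically the justification of $\esup_{x\in\I} G(x)\mathcal{A}(t,x)\approx G(t)$. The symmetry relation \eqref{prop.1} is the technical workhorse that lets one swap the roles of $t$ and $x$ inside $\mathcal{A}$, while \eqref{Fubini.1} and \eqref{Fubini.2} provide the "Fubini for $\esup$" needed to align the monotone factors ($G$ is essentially increasing, $G/a$ is essentially decreasing) with the corresponding halves of $\mathcal{A}(t,x)$. Everything else is bookkeeping parallel to what has already been done in Lemmas \ref{gluing.lem.1} and \ref{gluing.lem.2}.
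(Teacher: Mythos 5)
Your argument is correct, but it is not the route the paper takes. The paper's entire proof of Lemma \ref{gluing.lem.3} is a one-line reduction to Lemma \ref{gluing.lem.2} via the symmetry identity \eqref{prop.1}: writing $\mathcal{A}(x,t)^{\beta}g(t)=a(x)^{\beta}\,\mathcal{A}(t,x)^{\beta}\,a(t)^{-\beta}g(t)$ and $\mathcal{A}(t,x)h(t)=a(x)^{-1}\,\mathcal{A}(x,t)\,a(t)h(t)$, the factors $a(x)$ and $a(x)^{-1}$ cancel and the left-hand side of Lemma \ref{gluing.lem.3} becomes the left-hand side of Lemma \ref{gluing.lem.2} with $\tilde g:=a\,h$ and $\tilde h:=a^{-\beta}g$; the right-hand side of Lemma \ref{gluing.lem.2} for this choice is exactly the claimed right-hand side. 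You instead re-run the two-step proof of Lemma \ref{gluing.lem.2} directly on the new configuration: first $\esup_{x}G(x)H(x)\approx\esup_{x}G(x)h(x)$ via the $a$-quasiconcavity of $G$ and the Fubini-for-suprema identities, then the two-term expansion of $G$. Both are sound; the paper's reduction buys brevity and reuses the already-proved lemma verbatim, while your direct argument is self-contained and makes the mechanism visible. Two minor points of hygiene: the pointwise bound $H(x)\ge\mathcal{A}(x,x)h(x)$ is not literally legitimate for an essential supremum (a single point is a null set), but this is harmless since your main chain $\esup_{x}G(x)H(x)=\esup_{t}h(t)\esup_{x}G(x)\mathcal{A}(t,x)\approx\esup_{t}h(t)G(t)$ is already a two-sided equivalence; and the expansion of $G$ you invoke is recorded only inside the proof of Lemma \ref{gluing.lem.2} (with the arguments of $\mathcal{A}$ swapped), not before its statement, though it follows at once from $\mathcal{A}(x,t)\approx\min\{1,a(x)/a(t)\}$.
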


\begin{proof}
Using identity \eqref{prop.1}, the statement easily follows from Lemma \ref{gluing.lem.2}. 
\end{proof}

We quote some known results. Proofs  can be found in \cite[Proposition 2.1]{ghs_1996}.

\begin{defi} \label{D:2.1}
	Let $N,M\in \overline{\Z}$, $N<M$. A positive almost  non-increasing
	sequence $\{\tau _k\} _{k=N}^M$ (that is, there exists $K\geq1$ such that $\tau_{n+1}\leq K\tau_n$) is called {\it almost geometrically decreasing} if there are $\alpha \in (1,\infty )$ and $L\in \N$
	such that
	$$
	{\alpha}\tau _k\leq \tau _{k-L} \quad
	\text{for all} \quad k\in \{ N+L, \dots ,M\}. 
	$$
	A positive almost non-decreasing sequence $\{\sigma _k\} _{k=N}^M$ (that is, there exists $K\geq 1$ such that $\sigma_n \leq K\sigma_{n+1}$) is called {\it almost geometrically increasing} if there are $\alpha \in (1,
	+\infty )$ and $L\in \N$ such that
	$$
	\sigma _k\geq \alpha \sigma _{k-L} \quad \text{for all} \quad k\in
	\{ N+L, \dots ,M\}.$$
\end{defi}

\begin{lem}\label{AGD lemma}
	Let $q \in (0,\infty]$, $N,M\in \overline{\Z}$, $N\leq M$, $\mathcal{Z}=\{N,N+1,...,M-1,M\}$ and let $\{\tau_k\}_{k=N}^M$ be an almost geometrically decreasing sequence. Then
	\begin{align}
	\left \| \left\{ \tau_k \sum_{m=N}^k  a_m \right\}  \right \|_{\ell^q(\mathcal{Z})} & \ap \|\{\tau_k a_k\}\|_{\ell^q(\mathcal{Z})} \\
	\intertext{and}
	\left \| \left\{ \tau_k \sup_{N\leq m\leq k} a_m \right\} \right\|_{\ell^q(\mathcal{Z})} & \ap \| \{\tau_k a_k\}\|_{\ell^q(\mathcal{Z})}
	\end{align}
	for all non-negative sequences $\{a_k\}_{k=N}^M$.
\end{lem}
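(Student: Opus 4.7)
The plan is to reduce both estimates to a single discrete convolution inequality with a geometrically decaying kernel. The lower bounds are immediate: since $a_k \le \sum_{m=N}^k a_m$ and $a_k \le \sup_{N\le m \le k} a_m$, one has
\[
\|\{\tau_k a_k\}\|_{\ell^q(\mathcal{Z})} \le \Big\| \Big\{\tau_k \sum_{m=N}^k a_m \Big\} \Big\|_{\ell^q(\mathcal{Z})}
\]
and the analogous inequality for the supremum version, so the rest of the proof would concentrate on the reverse inequalities.

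First I would convert the hypothesis that $\{\tau_k\}$ is almost geometrically decreasing into a uniform pointwise bound on the ratios. By definition there exist constants $K \ge 1$, $L \in \mathbb{N}$ and $\alpha > 1$ with $\tau_{j+1} \le K \tau_j$ and $\alpha \tau_k \le \tau_{k-L}$. Writing $k-m = jL + r$ with $j = \lfloor (k-m)/L \rfloor$ and $0 \le r < L$, iterating the second inequality $j$ times and applying the first at most $L-1$ times yields constants $C_1 > 0$ and $\gamma := \alpha^{-1/L} \in (0,1)$, depending only on $\alpha$, $K$, $L$, such that
\[
\frac{\tau_k}{\tau_m} \le C_1 \, \gamma^{k-m}, \qquad N \le m \le k \le M.
\]

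Setting $b_m := \tau_m a_m$, the ratio estimate gives
\[
\tau_k \sum_{m=N}^k a_m \le C_1 \sum_{m=N}^k \gamma^{k-m} b_m, \qquad \tau_k \sup_{N \le m \le k} a_m \le C_1 \sum_{m=N}^k \gamma^{k-m} b_m,
\]
so both upper estimates reduce to the single discrete convolution inequality
\[
\Big\| \Big\{ \sum_{m=N}^k \gamma^{k-m} b_m \Big\}_k \Big\|_{\ell^q(\mathcal{Z})} \lesssim \|\{b_k\}\|_{\ell^q(\mathcal{Z})}.
\]
For $q = \infty$ this is trivial because $\sum_{j\ge 0} \gamma^j < \infty$. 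For $q \in [1,\infty)$ it is Young's convolution inequality in $\ell^q$ with the $\ell^1$ kernel $g_j = \gamma^j \mathbf{1}_{\{j \ge 0\}}$. For $q \in (0,1)$ I would use the elementary bound $(x+y)^q \le x^q + y^q$ for non-negative reals, which upon iteration gives
\[
\sum_k \Big(\sum_{m\le k} \gamma^{k-m} b_m \Big)^q \le \sum_k \sum_{m\le k} \gamma^{q(k-m)} b_m^q = \sum_m b_m^q \sum_{k \ge m} \gamma^{q(k-m)} \lesssim \sum_m b_m^q
\]
after a Fubini swap, which is exactly the desired bound.

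The main technical point is the first step: the iteration must absorb the ``local drift'' factor coming from the quasi-monotonicity constant $K$ into the final constant $C_1$, so that the ratio bound is uniform in $k$ and $m$ with only the geometric factor $\gamma^{k-m}$ depending on the indices. Once that ratio estimate is in hand, the convolution estimate handles both the sum and the supremum version simultaneously across all $q \in (0,\infty]$, and the lemma follows. The analogous statement for almost geometrically increasing sequences would be obtained by reindexing $k \mapsto -k$ or by repeating the argument with the sup/sum taken over $m \ge k$.
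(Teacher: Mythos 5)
Your argument is correct and complete. Note that the paper itself does not prove this lemma: it is quoted verbatim, with the proof attributed to Proposition~2.1 of Goldman--Heinig--Stepanov \cite{ghs_1996}, so there is no in-paper proof to compare against. Your route is the standard one and all the steps check out: the iteration $\tau_k\le\alpha^{-j}\tau_{m+r}\le K^{r}\alpha^{-j}\tau_m$ with $k-m=jL+r$, $0\le r<L$, stays within the admissible index range (every intermediate index $k-iL\ge m\ge N$, so the hypothesis $\alpha\tau_k\le\tau_{k-L}$ applies at each step), and it yields $\tau_k/\tau_m\le (K/\gamma)^{L-1}\gamma^{k-m}$ with $\gamma=\alpha^{-1/L}$; this is precisely where the almost non-increasing constant $K$ gets absorbed, as you flag. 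The reduction of both the sum and the sup estimate to the single convolution bound with kernel $\gamma^{k-m}$ is clean, and the three-way case split ($q=\infty$ trivially, $q\in[1,\infty)$ by Young, $q\in(0,1)$ by $q$-subadditivity plus Fubini) covers all of $q\in(0,\infty]$, including the case of infinite $N$ or $M$ since nothing in the argument uses finiteness of $\mathcal Z$. The only cosmetic slip is the phrase ``upon iteration'' for the bound $\bigl(\sum_i c_i\bigr)^q\le\sum_i c_i^q$ when the index set is infinite; this is harmless, as the countable version follows from the finite one by monotone convergence. Your closing remark that Lemma~\ref{AGI lemma} follows by reindexing is also accurate.
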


\begin{lem}\label{AGI lemma}
	Let $q\in (0,+\infty ]$, $ N,M\in \overline{\Z}$, $N\le M$ and let $\{ \sigma_k\} _{k=N}^M$ be an almost geometrically increasing sequence. Then
	\begin{align}\label{E1.3}
	\left\| \left\{ \sigma _k \sum_{m=k}^{M} a_m \right\} \right\| _{\ell^q(\mathcal{Z})} & \approx \| \{\sigma _k a_k\}\| _{\ell^q(\mathcal{Z})} \\
	\intertext{and}
	\left \| \left\{ \sigma _k \sup _{k\leq m\leq M} a_m \right\} \right\| _{\ell^q(\mathcal{Z})} & \approx \| \{\sigma _k a_k \}\| _{\ell^q(\mathcal{Z})}
	\end{align}
	for all non-negative sequences $\{ a_k\} _{k=N}^M$.
\end{lem}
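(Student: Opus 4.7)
The plan is to prove each equivalence by combining a trivial lower estimate with an upper estimate that converts the geometric growth of $\{\sigma_k\}$ into rapid decay of the ratios $\sigma_k/\sigma_m$ for $m\geq k$. In both cases the ``$\gtrsim$'' direction is immediate, since $a_k$ is one of the terms in $\sum_{m=k}^{M} a_m$ and in $\sup_{k\leq m\leq M}a_m$, so $\sigma_k a_k$ is dominated pointwise by either right-hand side; passing to the $\ell^q(\mathcal{Z})$ norm then places $\|\{\sigma_k a_k\}\|_{\ell^q(\mathcal{Z})}$ on the smaller side of both asymptotic equivalences.

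The first genuine step is to show that the two conditions defining an almost geometrically increasing sequence (namely $\sigma_n\leq K\sigma_{n+1}$ and $\sigma_k\geq \alpha\sigma_{k-L}$) imply the quantitative bound
$$
\sigma_k \leq C\,\gamma^{-(m-k)}\,\sigma_m, \qquad k\leq m,\ k,m\in \mathcal{Z},
$$
for some $C>0$ and $\gamma>1$. To see this, write $m-k = jL + r$ with $0\leq r <L$; iterating $\sigma_k\geq \alpha\sigma_{k-L}$ gives $\sigma_{k+jL}\geq \alpha^j\sigma_k$, while iterating $\sigma_{n+1}\geq K^{-1}\sigma_n$ handles the remaining $r$ steps, producing $\sigma_m\geq K^{-(L-1)}\alpha^{j}\sigma_k$. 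Taking $\gamma = \alpha^{1/L}$ and absorbing the loss from $j\geq (m-k)/L - 1$ into $C$ yields the desired decay.

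Armed with this estimate, the sum version reduces to
$$
\sigma_k\sum_{m=k}^{M} a_m \leq C\sum_{m=k}^{M} \gamma^{-(m-k)}\,\sigma_m a_m,
$$
so the problem becomes bounding the discrete Hardy-type operator $\{c_m\}\mapsto \{\sum_{m\geq k}\gamma^{-(m-k)}c_m\}$ on $\ell^q(\mathcal{Z})$. For $q\geq 1$ this follows from Minkowski's inequality applied to the decomposition $\sum_{n\geq 0}\gamma^{-n}c_{k+n}$ and the translation invariance of $\ell^q$-norms of the shifted sequences $\{c_{k+n}\}_k$. For $0<q<1$ one instead uses the subadditivity $\bigl(\sum x_m\bigr)^q\leq \sum x_m^q$ followed by a Fubini exchange, which produces $\sum_k\sum_{m\geq k}\gamma^{-q(m-k)}c_m^q = \sum_m c_m^q \sum_{k\leq m}\gamma^{-q(m-k)}\lesssim \|c\|_{\ell^q}^q$. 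The case $q=\infty$ is immediate from $\sum_n\gamma^{-n}<\infty$. The supremum version is handled in parallel, starting from $\sigma_k\sup_{k\leq m\leq M}a_m = \sup_{k\leq m\leq M}\sigma_k a_m$ and the pointwise domination $\sup_{m\geq k}\gamma^{-(m-k)}c_m \leq \bigl(\sum_{m\geq k}\gamma^{-q(m-k)} c_m^q\bigr)^{1/q}$, after which the same Fubini argument applies.

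The main obstacle I expect is the case $0<q<1$, where Minkowski fails and one must fall back on $q$-subadditivity, together with the need to verify that the constant $C$ in the decay estimate $\sigma_k\lesssim \gamma^{-(m-k)}\sigma_m$ is genuinely uniform over $k,m\in\mathcal{Z}$. The latter forces one to use both clauses of Definition~\ref{D:2.1} simultaneously: the geometric increase alone leaves gaps at non-multiples of $L$, while the almost non-decrease alone provides no decay of ratios at all.
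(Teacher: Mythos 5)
Your proof is correct. Note that the paper does not actually prove Lemma~\ref{AGI lemma}: it is quoted verbatim from \cite[Proposition~2.1]{ghs_1996} with no argument given, so there is nothing in the source to diverge from. Your argument is the standard one underlying that reference --- the trivial lower bound, the quantitative decay $\sigma_k\le C\gamma^{-(m-k)}\sigma_m$ extracted from the two clauses of Definition~\ref{D:2.1} (geometric growth along steps of length $L$, almost-monotonicity to bridge the remainder $r<L$), and then $\ell^q$-boundedness of convolution against a geometrically decaying kernel, split into Minkowski for $q\ge1$, $q$-subadditivity plus Fubini for $0<q<1$, and the trivial estimate for $q=\infty$ --- and it is complete as written, including the supremum variant via $\sup_m x_m\le\bigl(\sum_m x_m^q\bigr)^{1/q}$.
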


\begin{lem}\label{gluing.lem.0}
Let $\a$ and $\b$ be positive numbers. Suppose that $g,\,h \in \M^+\I$ and $a \in \W\I$ is non-decreasing.
Then
\begin{align*}
\esup_{x \in (0,\infty)} \bigg( \int_0^{\infty} {\mathcal A}(x,t)^{\b} g(t)\,dt \bigg)^{\frac{1}{\b}} \bigg(\int_0^{\infty} {\mathcal A}(t,x)^{\a} h(t)\,dt\bigg)^{\frac{1}{\a}} & \\
& \hspace{-5.5cm} \ap \esup_{x \in (0,\infty)} \bigg( \int_0^x g(t)\,dt\bigg)^{\frac{1}{\b}} \bigg(\int_x^{\infty} h(t)\,dt\bigg)^{\frac{1}{\a}}  \\
& \hspace{-5cm} + \esup_{x \in (0,\infty)} \bigg( \int_x^{\infty} a(t)^{-\b} g(t)\,dt \bigg)^{\frac{1}{\b}}  \bigg( \int_0^x a(t)^{\a}h(t)\,dt\bigg)^{\frac{1}{\a}}.
\end{align*}
\end{lem}

\begin{proof}
Denote by
\begin{align*}
A_5 : & = \esup_{x \in (0,\infty)} \bigg( \int_0^x g(t)\,dt\bigg)^{\frac{1}{\b}} \bigg( \int_x^{\infty} h(t)\,dt\bigg)^{\frac{1}{\a}}, \\
A_6 : & = \esup_{x \in (0,\infty)} \bigg( \int_x^{\infty} a(t)^{-\b} g(t)\,dt \bigg)^{\frac{1}{\b}}  \bigg( \int_0^x a(t)^{\a}h(t)\,dt\bigg)^{\frac{1}{\a}}.
\end{align*}
Obviously,
\begin{align*}
\esup_{x \in (0,\infty)} \bigg( \int_0^{\infty} {\mathcal A}(x,t)^{\b} g(t)\,dt \bigg)^{\frac{1}{\b}} \bigg(\int_0^{\infty} {\mathcal A}(t,x)^{\a} h(t)\,dt\bigg)^{\frac{1}{\a}} & \\
& \hspace{-7cm} \ap \esup_{x \in (0,\infty)} \bigg( \int_0^x g(t)\,dt + a(x)^{\b} \int_x^{\infty} a(t)^{-\b}g(t) \,dt\bigg)^{\frac{1}{\b}} \bigg(a(x)^{-\a} \int_0^x a(t)^{\a}h(t)\,dt + \int_x^{\infty} h(t)\,dt\bigg)^{\frac{1}{\a}} \\
& \hspace{-7cm} \ap A_5 + A_6 + B_5 + B_6,
\end{align*}
where
\begin{align*}
B_5 : & = \esup_{x \in (0,\infty)} \bigg( \int_0^x g(t)\,dt\bigg)^{\frac{1}{\b}} a(x)^{-1} \bigg( \int_0^x a(t)^{\a}h(t)\,dt\bigg)^{\frac{1}{\a}}, \\
B_6 : & = \esup_{x \in (0,\infty)} a(x) \bigg(\int_x^{\infty} a(t)^{-\b}g(t) \,dt\bigg)^{\frac{1}{\b}} \bigg(\int_x^{\infty} h(t)\,dt\bigg)^{\frac{1}{\a}}.
\end{align*}
It is enough to show that $B_i \ls A_5 + A_6$, $i = 5,6$.
	
First, let us show that $B_5 \ls A_5 + A_6$. We will consider the case when $\int_0^{\infty} g(t)\,dt < \infty$ (The case when $\int_0^{\infty} g(t)\,dt = \infty$ is much simpler to treat). Define a sequence $\{x_m\}_{m = -\infty}^M$ such that $\int_0^{x_m} g(t)\,dt = 2^m$ if $-\infty < m \le M$ and $2^M \le \int_0^{\infty} g(t)\,dt < 2^{M+1}$. Denote by $x_{M+1} : = \infty$. Then, by \eqref{Fubini.2} and Lemma~\ref{AGI lemma}, we have that
\begin{align*}
B_5 & = \esup_{x \in (0,\infty)} \bigg( \int_0^x g(t)\,dt\bigg)^{\frac{1}{\b}} \esup_{y \in (x,\infty)} a(y)^{-1} \bigg( \int_0^y a(t)^{\a}h(t)\,dt\bigg)^{\frac{1}{\a}} \\
& \ap \sup_{- \infty < m \le M} 2^{\frac{m}{\b}} \esup_{y \in (x_m,\infty)} a(y)^{-1} \bigg( \int_0^y a(t)^{\a}h(t)\,dt\bigg)^{\frac{1}{\a}} \\
& \ap \sup_{- \infty < m \le M} 2^{\frac{m}{\b}} \esup_{y \in (x_m,x_{m+1})} a(y)^{-1} \bigg( \int_0^y a(t)^{\a}h(t)\,dt\bigg)^{\frac{1}{\a}}.
\end{align*}
For every $-\infty < m \le M$, there exists $y_m \in (x_m,x_{m+1})$ such that
$$
\esup_{y \in (x_m,x_{m+1})} a(y)^{-1} \bigg( \int_0^y a(t)^{\a}h(t)\,dt\bigg)^{\frac{1}{\a}} \le 2 a(y_m)^{-1} \bigg( \int_0^{y_m} a(t)^{\a}h(t)\,dt\bigg)^{\frac{1}{\a}}.
$$
Therefore,
\begin{align*}
B_5 & \ls \sup_{- \infty < m \le M} 2^{\frac{m}{\b}} a(y_m)^{-1} \bigg( \int_0^{y_m} a(t)^{\a}h(t)\,dt\bigg)^{\frac{1}{\a}} \\
& \ap \sup_{- \infty < m \le M} 2^{\frac{m}{\b}} a(y_m)^{-1} \bigg( \int_0^{y_{m-2}} a(t)^{\a}h(t)\,dt\bigg)^{\frac{1}{\a}} + \sup_{- \infty < m \le M} 2^{\frac{m}{\b}} a(y_m)^{-1} \bigg( \int_{y_{m-2}}^{y_m} a(t)^{\a}h(t)\,dt\bigg)^{\frac{1}{\a}} = : I + II.
\end{align*}
Note that $\int_{y_{m-2}}^{y_m} g(t)\,dt \ap 2^m$, $- \infty < m \le M$. It yields that
\begin{align*}
I & \ap  \sup_{- \infty < m \le M} \bigg( \int_{y_{m-2}}^{y_m} g(t)\,dt \bigg)^{\frac{1}{\b}} a(y_m)^{-1} \bigg( \int_0^{y_{m-2}} a(t)^{\a}h(t)\,dt\bigg)^{\frac{1}{\a}} \\
& = \sup_{- \infty < m \le M} \esup_{y_{m-2} < x < y_m} \bigg( \int_x^{y_m} g(t)\,dt \bigg)^{\frac{1}{\b}} a(y_m)^{-1} \bigg( \int_0^{y_{m-2}} a(t)^{\a}h(t)\,dt\bigg)^{\frac{1}{\a}} \\
& \leq \sup_{- \infty < m \le M} \esup_{y_{m-2} < x < y_m} \bigg( \int_x^{y_m} a(t)^{-\b}g(t)\,dt \bigg)^{\frac{1}{\b}}  \bigg( \int_0^{y_{m-2}} a(t)^{\a}h(t)\,dt\bigg)^{\frac{1}{\a}} \\
& \leq \sup_{- \infty < m \le M} \esup_{y_{m-2} < x < y_m} \bigg( \int_x^{\infty} a(t)^{-\b}g(t)\,dt \bigg)^{\frac{1}{\b}}  \bigg( \int_0^x a(t)^{\a}h(t)\,dt\bigg)^{\frac{1}{\a}} \\	
& \lesssim \esup_{x \in (0,\infty)} \bigg( \int_x^{\infty} a(t)^{-\b} g(t)\,dt\bigg)^{\frac{1}{\b}}  \bigg( \int_0^x a(t)^{\a}h(t)\,dt\bigg)^{\frac{1}{\a}} = A_6.
\end{align*}
For $II$ we have that
\begin{align*}
II & \ap  \sup_{- \infty < m \le M} \bigg( \int_{y_{m-4}}^{y_{m-2}} g(t)\,dt \bigg)^{\frac{1}{\b}} a(y_m)^{-1} \bigg( \int_{y_{m-2}}^{y_m} a(t)^{\a}h(t)\,dt\bigg)^{\frac{1}{\a}} \\
& = \sup_{- \infty < m \le M}  \esup_{y_{m-4} < x < y_{m-2}} \bigg( \int_{y_{m-4}}^x g(t)\,dt \bigg)^{\frac{1}{\b}} a(y_m)^{-1} \bigg( \int_{y_{m-2}}^{y_m} a(t)^{\a}h(t)\,dt\bigg)^{\frac{1}{\a}} \\
& \le \sup_{- \infty < m \le M}  \esup_{y_{m-4} < x < y_{m-2}} \bigg( \int_{y_{m-4}}^x g(t)\,dt \bigg)^{\frac{1}{\b}} \bigg( \int_{y_{m-2}}^{y_m} h(t)\,dt\bigg)^{\frac{1}{\a}} \\
& \le \sup_{- \infty < m \le M}  \esup_{y_{m-4} < x < y_{m-2}} \bigg( \int_{y_{m-4}}^x g(t)\,dt \bigg)^{\frac{1}{\b}} \bigg( \int_x^{y_m} h(t)\,dt\bigg)^{\frac{1}{\a}} \\	
& \le \sup_{- \infty < m \le M}  \esup_{y_{m-4} < x < y_{m-2}} \bigg( \int_0^x g(t)\,dt \bigg)^{\frac{1}{\b}} \bigg( \int_x^{\infty} h(t)\,dt\bigg)^{\frac{1}{\a}} \\	
& \le \esup_{x \in (0,\infty)} \bigg( \int_0^x g(t)\,dt\bigg)^{\frac{1}{\b}} \bigg( \int_x^{\infty} h(t)\,dt\bigg)^{\frac{1}{\a}} = A_5.
\end{align*}
Combining, we get that $B_5 \ls A_5 + A_6$.
	
Now we show that $B_6 \ls A_5 + A_6$. Let $\int_0^{\infty} a(t)^{-\b}g(t)\,dt < \infty$ (It is much simpler to deal with the case when $\int_0^{\infty} a(t)^{-\b}g(t)\,dt = \infty$). Define  a sequence $\{x_m\}_{m = N}^{\infty}$ such that $\int_{x_m}^{\infty} a(t)^{-\b} g(t)\,dt = 2^{-m}$ if $N \le m < \infty$ and $2^{-N} < \int_0^{\infty} a(t)^{-\b} g(t)\,dt \le 2^{- N + 1}$. Denote by $x_{N-1} : = 0$.

By using Lemma~\ref{AGD lemma}, we find that
\begin{align*}
B_6 & = \esup_{x \in (0,\infty)} \bigg(\int_x^{\infty} a(t)^{-\b}g(t) \,dt\bigg)^{\frac{1}{\b}} \esup_{y \in (0,x)} a(y)\bigg(\int_y^{\infty} h(t)\,dt\bigg)^{\frac{1}{\a}} \\
& \ap \sup_{N \le m < \infty}  2^{- \frac{m}{\b}} \esup_{y \in (0,x_m)} a(y)\bigg(\int_y^{\infty} h(t)\,dt\bigg)^{\frac{1}{\a}} \\
& \ap \sup_{N \le m < \infty}  2^{- \frac{m}{\b}} \esup_{y \in (x_{m-1},x_m)} a(y)\bigg(\int_y^{\infty} h(t)\,dt\bigg)^{\frac{1}{\a}}.
\end{align*}
For every $N \leq m < \infty$, there exists $y_m \in (x_{m-1},x_m)$ such that
$$
\esup_{y \in (x_{m-1},x_m)} a(y)\bigg(\int_y^{\infty} h(t)\,dt\bigg)^{\frac{1}{\a}} \le 2  a(y_m)\bigg(\int_{y_m}^{\infty} h(t)\,dt\bigg)^{\frac{1}{\a}}.
$$
Hence
\begin{align*}
B_6 & \ls \sup_{N \le m < \infty}  2^{- \frac{m}{\b}} a(y_m)\bigg(\int_{y_m}^{\infty} h(t)\,dt\bigg)^{\frac{1}{\a}} \\
& \ap \sup_{N \le m < \infty}  2^{- \frac{m}{\b}} a(y_m)\bigg(\int_{y_m}^{y_{m+2}} h(t)\,dt\bigg)^{\frac{1}{\a}} +  \sup_{N \le m < \infty}  2^{- \frac{m}{\b}} a(y_m)\bigg(\int_{y_{m+2}}^{\infty} h(t)\,dt\bigg)^{\frac{1}{\a}} = : III + IV.
\end{align*}
Since $\int_{y_m}^{y_{m+2}} a(t)^{-\b} g(t)\,dt \ap 2^{-m}$, we have that
\begin{align*}
III & \ap \sup_{N \le m < \infty}  \bigg( \int_{y_{m+2}}^{y_{m+4}} a(t)^{-\b} g(t)\,dt \bigg)^{\frac{1}{\b}} a(y_m)\bigg(\int_{y_m}^{y_{m+2}} h(t)\,dt\bigg)^{\frac{1}{\a}} \\
& = \sup_{N \le m < \infty}  \esup_{y_{m+2} < x < y_{m+4}}\bigg( \int_x^{y_{m+4}} a(t)^{-\b} g(t)\,dt \bigg)^{\frac{1}{\b}} a(y_m)\bigg(\int_{y_m}^{y_{m+2}} h(t)\,dt\bigg)^{\frac{1}{\a}} \\
& \leq \sup_{N \le m < \infty}  \esup_{y_{m+2} < x < y_{m+4}}\bigg( \int_x^{y_{m+4}} a(t)^{-\b} g(t)\,dt \bigg)^{\frac{1}{\b}} \bigg(\int_{y_m}^{y_{m+2}} a(t)^{\a}h(t)\,dt\bigg)^{\frac{1}{\a}} \\	
& \leq \sup_{N \le m < \infty}  \esup_{y_{m+2} < x < y_{m+4}}\bigg( \int_x^{y_{m+4}} a(t)^{-\b} g(t)\,dt \bigg)^{\frac{1}{\b}} \bigg(\int_{y_m}^x a(t)^{\a}h(t)\,dt\bigg)^{\frac{1}{\a}} \\	
& \leq \sup_{N \le m < \infty}  \esup_{y_{m+2} < x < y_{m+4}}\bigg( \int_x^{\infty} a(t)^{-\b} g(t)\,dt \bigg)^{\frac{1}{\b}} \bigg(\int_0^x a(t)^{\a}h(t)\,dt\bigg)^{\frac{1}{\a}} \\		
& \approx \esup_{x \in (0,\infty)} \bigg( \int_x^{\infty} a(t)^{-\b} g(t)\,dt \bigg)^{\frac{1}{\b}}  \bigg( \int_0^x a(t)^{\a}h(t)\,dt\bigg)^{\frac{1}{\a}} =  A_6.
\end{align*}
Moreover,
\begin{align*}
IV & \ap \sup_{N \le m < \infty}  \bigg( \int_{y_m}^{y_{m+2}} a(t)^{-\b}g(t)\,dt \bigg)^{\frac{1}{\b}} a(y_m)\bigg(\int_{y_{m+2}}^{\infty} h(t)\,dt\bigg)^{\frac{1}{\a}} \\
& \le \sup_{N \le m < \infty}  \bigg( \int_{y_m}^{y_{m+2}} g(t)\,dt \bigg)^{\frac{1}{\b}} \bigg(\int_{y_{m+2}}^{\infty} h(t)\,dt\bigg)^{\frac{1}{\a}} \\
& \le \sup_{N \le m < \infty}  \bigg( \int_0^{y_{m+2}} g(t)\,dt \bigg)^{\frac{1}{\b}} \bigg(\int_{y_{m+2}}^{\infty} h(t)\,dt\bigg)^{\frac{1}{\a}} \\
& \lesssim \esup_{x \in (0,\infty)} \bigg( \int_0^x g(t)\,dt\bigg)^{\frac{1}{\b}} \bigg( \int_x^{\infty} h(t)\,dt\bigg)^{\frac{1}{\a}} = A_5.
\end{align*}
Therefore, we obtain $B_6 \ls A_5 + A_6$. The proof is complete.
\end{proof}

\begin{lem}\label{gluing.lem.4}
Let $\a,\,\b,\,\gamma$ be positive numbers. Suppose that $g,\,h \in \M^+\I$ and $a \in \W\I$ is non-decreasing. Then
\begin{align*}
\int_0^{\infty} \bigg( \int_0^{\infty} {\mathcal A}(x,t)^{\a}g(t)\,dt \bigg)^{\frac{\gamma}{\a} - 1} \bigg( \int_0^{\infty} {\mathcal A}(t,x)^{\b} h(t)\,dt\bigg)^{\frac{\gamma}{\b}}g(x)\,dx & \\
& \hspace{-5.5cm} \ap \int_0^{\infty} \bigg( \int_0^x g(t)\,dt\bigg)^{\frac{\gamma}{\a} - 1} \bigg( \int_x^{\infty} h(t)\,dt\bigg)^{\frac{\gamma}{\b}} g(x)\,dx \\
& \hspace{-5cm} + \int_0^{\infty} \bigg( \int_x^{\infty} a(t)^{-\a} g(t)\,dt \bigg)^{\frac{\gamma}{\a} - 1}  \bigg( \int_0^x  a(t)^{\b}h(t)\,dt\bigg)^{\frac{\gamma}{\b}}\,a(x)^{-\a}g(x)\,dx.
\end{align*}
\end{lem}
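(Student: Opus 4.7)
The plan is to adapt the argument from Lemma \ref{gluing.lem.0} to the outer integral setting. I would begin by using ${\mathcal A}(x,t) \approx \min\{1, a(x)/a(t)\}$ to rewrite
\begin{align*}
\int_0^{\infty} {\mathcal A}(x,t)^{\a} g(t)\,dt & \approx \int_0^x g\,dt + a(x)^{\a}\int_x^{\infty} a^{-\a}g\,dt =: G_1(x) + G_2(x), \\
\int_0^{\infty} {\mathcal A}(t,x)^{\b} h(t)\,dt & \approx a(x)^{-\b}\int_0^x a^{\b}h\,dt + \int_x^{\infty} h\,dt =: H_1(x) + H_2(x).
\end{align*}
A direct power count (all factors $a(x)$ cancel) collapses the second summand on the right-hand side to $\int_0^{\infty} G_2^{\gamma/\a-1} H_1^{\gamma/\b} g\,dx$, so the target estimate becomes
$$\int_0^{\infty} (G_1+G_2)^{\gamma/\a-1}(H_1+H_2)^{\gamma/\b} g\,dx \approx A_5 + A_6,$$
where $A_5 := \int_0^{\infty} G_1^{\gamma/\a-1} H_2^{\gamma/\b} g\,dx$ and $A_6 := \int_0^{\infty} G_2^{\gamma/\a-1} H_1^{\gamma/\b} g\,dx$.

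Since $\gamma/\b > 0$ the factor $(H_1+H_2)^{\gamma/\b}$ always splits as $\approx H_1^{\gamma/\b}+H_2^{\gamma/\b}$. The treatment of $(G_1+G_2)^{\gamma/\a-1}$ depends on the sign of $\gamma/\a-1$. When $\gamma/\a \ge 1$ it splits as $\approx G_1^{\gamma/\a-1}+G_2^{\gamma/\a-1}$, so the left-hand side decomposes into $A_5 + A_6 + B_5 + B_6$ with cross terms
$$B_5 := \int_0^{\infty} G_1^{\gamma/\a-1} H_1^{\gamma/\b} g\,dx, \qquad B_6 := \int_0^{\infty} G_2^{\gamma/\a-1} H_2^{\gamma/\b} g\,dx,$$
and everything reduces to showing $B_5, B_6 \lesssim A_5 + A_6$. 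When $\gamma/\a < 1$ one has instead $(G_1+G_2)^{\gamma/\a-1} \approx \max(G_1,G_2)^{\gamma/\a-1}$; the four-term structure is then recovered by splitting the integration axis along $\{G_1 \le G_2\}$ versus $\{G_2 < G_1\}$ and matching each $G_i$-term with the correct $H_j$-term.

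The core of the proof is the cross-term bound $B_5, B_6 \lesssim A_5 + A_6$, which I would carry out exactly along the lines of Lemma \ref{gluing.lem.0}. For $B_5$, define a partition $\{x_m\}$ by $G_1(x_m) = 2^m$, so that $G_1 \approx 2^m$ and $\int_{x_m}^{x_{m+1}} g \approx 2^m$ on each block; pick $y_m \in (x_m, x_{m+1})$ nearly maximising $H_1$, split $\int_0^{y_m} a^{\b} h = \int_0^{y_{m-2}} + \int_{y_{m-2}}^{y_m}$, and apply Lemmas \ref{AGI lemma} and \ref{AGD lemma} to the resulting almost-geometric sums. One piece is then identified with $A_5$ (through the observation $\int_{y_{m-4}}^{y_{m-2}} g \approx 2^m$, which converts block-sums back to integral bounds) and the other with $A_6$. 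The treatment of $B_6$ is symmetric, based on the dual partition $\int_{x_m}^{\infty} a^{-\a} g = 2^{-m}$, and it too is split according to whether the defining integral is finite or infinite exactly as in Lemma \ref{gluing.lem.0}.

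The main anticipated obstacle is the case $\gamma/\a < 1$: here the pointwise comparisons between $(G_1+G_2)^{\gamma/\a-1}$ and the individual $G_i^{\gamma/\a-1}$ run in the wrong direction, so both the algebraic splitting into four terms and the lower bound $A_5 + A_6 \lesssim \LHS$ must be obtained via a careful domain-split argument that pairs each $G_i$ with the matching $H_j$. Once that is done, the dyadic machinery carries over unchanged, since the only property really used is that the weight $g(x)\,dx$ has mass $\approx 2^m$ on each block, which is precisely what allows the integral analogues of the $\ell^{\infty}$ block-sum estimates from Lemma \ref{gluing.lem.0} to go through.
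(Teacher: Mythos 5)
Your proposal follows the paper's proof essentially verbatim: the same reduction via ${\mathcal A}(x,t)\approx\min\{1,a(x)/a(t)\}$ to a four-term decomposition (your $A_5,A_6,B_5,B_6$ are the paper's $A_7,A_8,B_7,B_8$), and the same dyadic partitions $\int_0^{x_m}g=2^m$, resp.\ $\int_{x_m}^{\infty}a^{-\a}g=2^{-m}$, with near-maximizers $y_m$ and Lemmas \ref{AGI lemma} and \ref{AGD lemma} used to prove the cross-term bounds $B_i\lesssim A_7+A_8$. The only divergences are that you explicitly flag the case $\gamma/\a<1$ as needing a separate argument for the four-term equivalence --- a step the paper dismisses with ``Obviously'' --- and that in this integral (as opposed to supremum) setting the block identity one actually needs is $\int_{y_{m-2}}^{y_m}\big(\int_x^{y_m}g\big)^{\gamma/\a-1}g(x)\,dx\approx 2^{m\gamma/\a}$ rather than merely $\int_{y_{m-2}}^{y_m}g\approx 2^m$, a refinement your sketch glosses over but which fits your stated plan.
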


\begin{proof}
Denote by
\begin{align*}
A_7 : & = \int_0^{\infty} \bigg( \int_0^x g(t)\,dt\bigg)^{\frac{\gamma}{\a} - 1} \bigg( \int_x^{\infty} h(t)\,dt\bigg)^{\frac{\gamma}{\b}} g(x)\,dx, \\
A_8 : & = \int_0^{\infty} \bigg( \int_x^{\infty} a(t)^{-\a} g(t)\,dt \bigg)^{\frac{\gamma}{\a} - 1} \bigg( \int_0^x  a(t)^{\b}h(t)\,dt\bigg)^{\frac{\gamma}{\b}}\,a(x)^{-\a}g(x)\,dx.
\end{align*}
Obviously,
\begin{align*}
\int_0^{\infty} \bigg( \int_0^{\infty} {\mathcal A}(x,t)^{\a}g(t)\,dt \bigg)^{\frac{\gamma}{\a} - 1} \bigg( \int_0^{\infty} {\mathcal A}(t,x)^{\b} h(t)\,dt\bigg)^{\frac{\gamma}{\b}}g(x)\,dx & \\
& \hspace{-9.5cm} \ap \int_0^{\infty} \bigg( \int_0^x g(t)\,dt + a(x)^{\a} \int_x^{\infty} a(t)^{-\a}g(t) \,dt\bigg)^{\frac{\gamma}{\a} - 1} \bigg(a(x)^{-\b} \int_0^x a(t)^{\b}h(t)\,dt + \int_x^{\infty} h(t)\,dt\bigg)^{\frac{\gamma}{\b}} g(x)\,dx \\
& \hspace{-9.5cm} \ap A_7 + A_8 + B_7 + B_8,
\end{align*}
where
\begin{align*}
B_7 : & = \int_0^{\infty} \bigg( \int_0^x g(t)\,dt\bigg)^{\frac{\gamma}{\a} - 1} a(x)^{-\gamma} \bigg( \int_0^x a(t)^{\b}h(t)\,dt\bigg)^{\frac{\gamma}{\b}} g(x)\,dx, \\
B_8 : & = \int_0^{\infty} \bigg( \int_x^{\infty} a(t)^{-\a}g(t) \,dt\bigg)^{\frac{\gamma}{\a} - 1} \bigg(\int_x^{\infty} h(t)\,dt\bigg)^{\frac{\gamma}{\b}} a(x)^{\gamma - \a} g(x)\,dx.
\end{align*}
It is enough to show that $B_i \ls A_7 + A_8$, $i = 7,8$.
	
First, let us show that $B_7 \ls A_7 + A_8$. Let $\int_0^{\infty} g(t)\,dt < \infty$ (The case when $\int_0^{\infty} g(t)\,dt = \infty$ is much simpler to treat). Define a sequence $\{x_m\}_{m = -\infty}^M$ such that $\int_0^{x_m} g(t)\,dt = 2^m$ if $-\infty < m \le M$ and $2^M \le \int_0^{\infty} g(t)\,dt < 2^{M+1}$. Denote by $x_{M+1} : = \infty$. Then we have by Lemma~\ref{AGI lemma} that
\begin{align*}
B_7 & \le \int_0^{\infty} \bigg( \int_0^x g(t)\,dt\bigg)^{\frac{\gamma}{\a} - 1} \bigg( \esup_{y \in (x,\infty) }a(y)^{-\gamma} \bigg( \int_0^y a(t)^{\b}h(t)\,dt\bigg)^{\frac{\gamma}{\b}} \bigg) \, g(x)\,dx \\
& \ap \sum_{m = -\infty}^M 2^{m\frac{\gamma}{\a}} \esup_{y \in (x_m,\infty) }a(y)^{-\gamma} \bigg( \int_0^y a(t)^{\b}h(t)\,dt\bigg)^{\frac{\gamma}{\b}} \\
& \ap \sum_{m = -\infty}^M 2^{m\frac{\gamma}{\a}} \esup_{y \in (x_m,x_{m+1}) }a(y)^{-\gamma} \bigg( \int_0^y a(t)^{\b}h(t)\,dt\bigg)^{\frac{\gamma}{\b}}.
\end{align*}
For every $-\infty < m \le M$ there exists $y_m \in (x_m,x_{m+1})$ such that
$$
\esup_{y \in (x_m,x_{m+1})} a(y)^{-\gamma} \bigg( \int_0^y a(t)^{\b}h(t)\,dt\bigg)^{\frac{\gamma}{\b}} \le 2 a(y_m)^{-\gamma} \bigg( \int_0^{y_m} a(t)^{\b}h(t)\,dt\bigg)^{\frac{\gamma}{\b}}.
$$
Therefore,
\begin{align*}
B_7 & \ls \sum_{m = -\infty}^M 2^{m\frac{\gamma}{\a}} a(y_m)^{-\gamma} \bigg( \int_0^{y_m} a(t)^{\b}h(t)\,dt\bigg)^{\frac{\gamma}{\b}} \\
& \ap \sum_{m = -\infty}^M 2^{m\frac{\gamma}{\a}} a(y_m)^{-\gamma} \bigg( \int_0^{y_{m-2}} a(t)^{\b}h(t)\,dt\bigg)^{\frac{\gamma}{\b}} + \sum_{m = -\infty}^M 2^{m\frac{\gamma}{\a}} a(y_m)^{-\gamma} \bigg( \int_{y_{m-2}}^{y_m} a(t)^{\b}h(t)\,dt\bigg)^{\frac{\gamma}{\b}} = : V + VI.
\end{align*}
Since $\int_{y_{m-2}}^{y_m} \bigg( \int_x^{y_m} g(t)\,dt \bigg)^{\frac{\gamma}{\a}-1} g(x)\,dx \ap 2^{m \frac{\gamma}{\a}}$ for $- \infty < m \le M$, we have that
\begin{align*}
V & \ap  \sum_{m = -\infty}^M \bigg( \int_{y_{m-2}}^{y_m} \bigg( \int_x^{y_m} g(t)\,dt \bigg)^{\frac{\gamma}{\a}-1} g(x)\,dx \bigg) \,\, a(y_m)^{-\gamma} \bigg( \int_0^{y_{m-2}} a(t)^{\b}h(t)\,dt\bigg)^{\frac{\gamma}{\b}} \\
& \le \sum_{m = -\infty}^M \bigg( \int_{y_{m-2}}^{y_m} \bigg( \int_x^{y_m} a(t)^{-\a}g(t)\,dt \bigg)^{\frac{\gamma}{\a}-1} a(x)^{-a} g(x)\,dx \bigg) \,\, \bigg( \int_0^{y_{m-2}} a(t)^{\b}h(t)\,dt\bigg)^{\frac{\gamma}{\b}} \\
& \le \sum_{m = -\infty}^M \int_{y_{m-2}}^{y_m} \bigg( \int_x^{\infty} a(t)^{-\a}g(t)\,dt \bigg)^{\frac{\gamma}{\a}-1} \bigg( \int_0^x a(t)^{\b}h(t)\,dt\bigg)^{\frac{\gamma}{\b}} a(x)^{-a} g(x)\,dx \\
& \ls \int_0^{\infty} \bigg( \int_x^{\infty} a(\tau)^{-\a} g(\tau)\,d\tau\bigg)^{\frac{\gamma}{\a} - 1}  \bigg( \int_0^x  a(t)^{\b}h(t)\,dt\bigg)^{\frac{\gamma}{\b}}\,a(x)^{-\a}g(x)\,dx = A_8.
\end{align*}
In view of $\int_{y_{m-4}}^{y_{m-2}} \bigg( \int_{y_{m-4}}^x g(t)\,dt \bigg)^{\frac{\gamma}{\a}-1} g(x)\,dx \ap 2^{m \frac{\gamma}{\a}}$, $- \infty < m \le M$, we get that
\begin{align*}
VI & \ap  \sum_{m = -\infty}^M \bigg( \int_{y_{m-4}}^{y_{m-2}} \bigg( \int_{y_{m-4}}^x g(t)\,dt \bigg)^{\frac{\gamma}{\a}-1} g(x)\,dx \bigg) \,\, a(y_m)^{-\gamma} \bigg( \int_{y_{m-2}}^{y_m} a(t)^{\b}h(t)\,dt\bigg)^{\frac{\gamma}{\b}} \\
& \le \sum_{m = -\infty}^M \bigg( \int_{y_{m-4}}^{y_{m-2}} \bigg( \int_{y_{m-4}}^x g(t)\,dt \bigg)^{\frac{\gamma}{\a}-1} g(x)\,dx \bigg) \, \bigg( \int_{y_{m-2}}^{y_m} h(t)\,dt\bigg)^{\frac{\gamma}{\b}} \\
& \le \sum_{m = -\infty}^M \int_{y_{m-4}}^{y_{m-2}} \bigg( \int_0^x g(t)\,dt \bigg)^{\frac{\gamma}{\a}-1} \bigg( \int_x^{\infty} h(t)\,dt\bigg)^{\frac{\gamma}{\b}} g(x)\,dx \\
& \ls \int_0^{\infty} \bigg( \int_0^x g(t)\,dt\bigg)^{\frac{\gamma}{\a} - 1} \bigg( \int_x^{\infty} h(t)\,dt\bigg)^{\frac{\gamma}{\b}} g(x)\,dx = A_7.
\end{align*}
Combining, we get that $B_7 \ls A_7 + A_8$.

Now we show that $B_8 \ls A_7 + A_8$. Let $\int_0^{\infty} a(t)^{-\a}g(t)\,dt < \infty$ (It is much simpler to deal with the case when $\int_0^{\infty} a(t)^{-\a}g(t)\,dt = \infty$). Define  a sequence $\{x_m\}_{m = N}^{\infty}$ such that $\int_{x_m}^{\infty} a(t)^{-\a}g(t)\,dt = 2^{-m}$ if $N \le m < \infty$ and $2^{-N} < \int_0^{\infty} a(t)^{-\a}g(t)\,dt \le 2^{- N + 1}$. Denote by $x_{N-1} : = 0$.
By using elementary calculations, in view of Lemma~\ref{AGD lemma}, we find that
\begin{align*}
B_8 & \le \int_0^{\infty} \bigg( \int_x^{\infty} a(t)^{-\a}g(t) \,dt\bigg)^{\frac{\gamma}{\a} - 1} \bigg( \esup_{y \in (0,x)} a(y)^{\gamma} \bigg(\int_y^{\infty} h(t)\,dt\bigg)^{\frac{\gamma}{\b}} \bigg) \,a(x)^{-\a} g(x)\,dx \\
& \ap \sum_{m = N}^{\infty} 2^{-m\frac{\gamma}{\a}} \esup_{y \in (0,x_m)} a(y)^{\gamma} \bigg(\int_y^{\infty} h(t)\,dt\bigg)^{\frac{\gamma}{\b}} \\
& \ap \sum_{m = N}^{\infty} 2^{-m\frac{\gamma}{\a}} \esup_{y \in (x_{m-1},x_m)} a(y)^{\gamma} \bigg(\int_y^{\infty} h(t)\,dt\bigg)^{\frac{\gamma}{\b}}. 
\end{align*}
For every $m = N,\, N+1,\ldots$ there exists $y_m \in (x_{m-1},x_m)$ such that
$$
\esup_{y \in (x_{m-1},x_m)} a(y)^{\gamma}\bigg(\int_y^{\infty} h(t)\,dt\bigg)^{\frac{\gamma}{\b}} \le 2  a(y_m)^{\gamma}\bigg(\int_{y_m}^{\infty} h(t)\,dt\bigg)^{\frac{\gamma}{\b}}.
$$
Hence
\begin{align*}
B_8 & \ls \sum_{m = N}^{\infty} 2^{-m\frac{\gamma}{\a}} a(y_m)^{\gamma}\bigg(\int_{y_m}^{\infty} h(t)\,dt\bigg)^{\frac{\gamma}{\b}} \\
& \ap \sum_{m = N}^{\infty} 2^{-m\frac{\gamma}{\a}} a(y_m)^{\gamma}\bigg(\int_{y_m}^{y_{m+2}} h(t)\,dt\bigg)^{\frac{\gamma}{\b}} + \sum_{m = N}^{\infty} 2^{-m\frac{\gamma}{\a}} a(y_m)^{\gamma}\bigg(\int_{y_{m+2}}^{\infty} h(t)\,dt\bigg)^{\frac{\gamma}{\b}} = : VII + VIII.
\end{align*}
Since $\int_{y_{m+2}}^{y_{m+4}} \bigg( \int_x^{y_{m+4}} a(t)^{-\a} g(t)\,dt \bigg)^{\frac{\gamma}{\a}-1} a(x)^{-\a} g(x)\,dx \ap 2^{-m \frac{\gamma}{\a}}$, $N \le m < \infty$, we have that
\begin{align*}
VII & \ap \sum_{m = N}^{\infty} \bigg( \int_{y_{m+2}}^{y_{m+4}} \bigg( \int_x^{y_{m+4}} a(t)^{-\a} g(t)\,dt \bigg)^{\frac{\gamma}{\a}-1} a(x)^{-\a} g(x)\,dx \bigg) \,\, a(y_m)^{\gamma}\bigg(\int_{y_m}^{y_{m+2}} h(t)\,dt\bigg)^{\frac{\gamma}{\b}} \\
& \le \sum_{m = N}^{\infty} \bigg( \int_{y_{m+2}}^{y_{m+4}} \bigg( \int_x^{y_{m+4}} a(t)^{-\a} g(t)\,dt \bigg)^{\frac{\gamma}{\a}-1} a(x)^{-\a} g(x)\,dx \bigg) \, \bigg(\int_{y_m}^{y_{m+2}} a(t)^{\b}h(t)\,dt\bigg)^{\frac{\gamma}{\b}} \\
& \le \sum_{m = N}^{\infty} \int_{y_{m+2}}^{y_{m+4}} \bigg( \int_x^{\infty} a(t)^{-\a} g(t)\,dt \bigg)^{\frac{\gamma}{\a}-1} \bigg(\int_0^x a(t)^{\b}h(t)\,dt\bigg)^{\frac{\gamma}{\b}} \,a(x)^{-\a} g(x)\,dx \\
& \ls \int_0^{\infty} \bigg( \int_x^{\infty} a(\tau)^{-\a} g(\tau)\,d\tau\bigg)^{\frac{\gamma}{\a} - 1}  \bigg( \int_0^x  a(t)^{\b}h(t)\,dt\bigg)^{\frac{\gamma}{\b}}\,a(x)^{-\a}g(x)\,dx = A_8.
\end{align*}
In view of $\int_{y_m}^{y_{m+2}} \bigg( \int_{y_m}^x a(t)^{-\a} g(t)\,dt \bigg)^{\frac{\gamma}{\a}-1} a(x)^{-\a} g(x)\,dx \ap 2^{-m \frac{\gamma}{\a}}$, $N \le m < \infty$, we get that
\begin{align*}
VIII & \ap \sum_{m = N}^{\infty} \bigg( \int_{y_m}^{y_{m+2}} \bigg( \int_{y_m}^x a(t)^{-\a} g(t)\,dt \bigg)^{\frac{\gamma}{\a}-1} a(x)^{-\a} g(x)\,dx \bigg) \,\, a(y_m)^{\gamma}\bigg(\int_{y_{m+2}}^{\infty} h(t)\,dt\bigg)^{\frac{\gamma}{\b}} \\
& \le \sum_{m = N}^{\infty} \bigg( \int_{y_m}^{y_{m+2}} \bigg( \int_{y_m}^x g(t)\,dt \bigg)^{\frac{\gamma}{\a}-1} g(x)\,dx \bigg) \, \bigg(\int_{y_{m+2}}^{\infty} h(t)\,dt\bigg)^{\frac{\gamma}{\b}} \\
& \le \sum_{m = N}^{\infty} \int_{y_m}^{y_{m+2}} \bigg( \int_0^x g(t)\,dt \bigg)^{\frac{\gamma}{\a}-1} \bigg(\int_x^{\infty} h(t)\,dt\bigg)^{\frac{\gamma}{\b}} \, g(x)\,dx \\
& \ls \int_0^{\infty} \bigg( \int_0^x g(t)\,dt\bigg)^{\frac{\gamma}{\a} - 1} \bigg( \int_x^{\infty} h(t)\,dt\bigg)^{\frac{\gamma}{\b}} g(x)\,dx = A_7.
\end{align*}
Therefore, we obtain $B_8 \ls A_7 + A_8$. The proof is complete.	
\end{proof}

Finally, we recall the following statement from \cite{gmu}.
\begin{lem}\cite[Lemma 3.13]{gmu}\label{gluing.lem}
Let $\b$ be a positive number. Suppose that $g,\,h \in \M^+\I$ and $a \in \W\I$ is non-decreasing.
Then
\begin{align*}
\int_0^{\infty} \bigg( \int_0^{\infty} {\mathcal A}(x,t)g(t)\,dt \bigg)^{\b - 1} \bigg(\esup_{t\in \I} {\mathcal A}(t,x) h(t)\bigg)^{\b}g(x)\,dx & \\
& \hspace{-5.5cm} \ap \int_0^{\infty} \bigg( \int_0^x g(t)\,dt\bigg)^{\b - 1} \bigg(\esup_{t \in (x, \infty)} h(t)\bigg)^{\b} g(x)\,dx \\
& \hspace{-5cm} + \int_0^{\infty} \bigg( \int_x^{\infty} a(t)^{-1} g(t)\,dt\bigg)^{\b - 1}  \bigg( \esup_{t \in (0,x)} a(t)h(t)\bigg)^{\b}\,a(x)^{-1}g(x)\,dx.
\end{align*}
\end{lem}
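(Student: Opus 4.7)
The plan is to follow the template of the proof of Lemma~\ref{gluing.lem.4} above, replacing its factor $(\int_0^\infty {\mathcal A}(t,x)^\beta h(t)\,dt)^{\gamma/\beta}$ by the supremum factor $(\esup_{t\in(0,\infty)}{\mathcal A}(t,x)h(t))^\beta$ and adjusting the intermediate estimates accordingly. The structural steps are identical: split each inner factor at $t=x$, expand the integrand into four terms, and absorb the two cross terms by a dyadic blocking argument.

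Using ${\mathcal A}(x,t)\approx\min\{1,a(x)/a(t)\}$ and (directly from the definition) ${\mathcal A}(t,x)\approx\min\{1,a(t)/a(x)\}$, the inner factors split at $t=x$ as
\begin{align*}
\int_0^\infty {\mathcal A}(x,t)g(t)\,dt & \approx P_1(x)+P_2(x),\\
\esup_{t\in(0,\infty)}{\mathcal A}(t,x)h(t) & \approx Q_1(x)+Q_2(x),
\end{align*}
where $P_1(x):=\int_0^x g$, $P_2(x):=a(x)\int_x^\infty a^{-1}g$, $Q_1(x):=a(x)^{-1}\esup_{(0,x)}ah$ and $Q_2(x):=\esup_{(x,\infty)}h$. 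Invoking $(Q_1+Q_2)^\beta\approx Q_1^\beta+Q_2^\beta$ and, when $\beta\geq 1$, the analogous relation for $(P_1+P_2)^{\beta-1}$, the left-hand side expands as
\[
\LHS\approx A+B+B_9+B_{10},
\]
where $A$ and $B$ are the two terms on the right-hand side of the lemma, and
\begin{align*}
B_9 & :=\int_0^\infty\bigg(\int_0^x g\bigg)^{\beta-1}\bigg(\esup_{(0,x)}ah\bigg)^{\beta} a(x)^{-\beta}g(x)\,dx,\\
B_{10} & :=\int_0^\infty\bigg(\int_x^\infty\frac{g}{a}\bigg)^{\beta-1}\bigg(\esup_{(x,\infty)}h\bigg)^{\beta} a(x)^{\beta-1}g(x)\,dx.
\end{align*}
Since the lower bound $A+B\lesssim\LHS$ is immediate, it remains to prove $B_9,B_{10}\lesssim A+B$.

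These cross-term bounds are obtained by the same dyadic blocking scheme as for $B_7$ and $B_8$ in the proof of Lemma~\ref{gluing.lem.4}. For $B_9$, assume $\int_0^\infty g<\infty$ (the other case is simpler) and set $\int_0^{x_m}g=2^m$ for $-\infty<m\leq M$, $x_{M+1}:=\infty$. Using \eqref{Fubini.2} to pull out the monotone envelope and Lemma~\ref{AGI lemma} to collapse the resulting geometric sum, $B_9$ reduces to $\sum_m 2^{m\beta}\esup_{y\in(x_m,x_{m+1})}[a(y)^{-\beta}(\esup_{(0,y)}ah)^\beta]$. Picking $y_m\in(x_m,x_{m+1})$ realizing this supremum up to a factor of $2$ and splitting $\esup_{(0,y_m)}ah$ at $y_{m-2}$ (and if needed at $y_{m-4}$), the contribution from $(0,y_{m-2})$ is controlled by $B$ after converting $2^{m\beta}$ to $(\int_{y_{m-2}}^{y_m}g)^{\beta}$ and using $a(t)^{-1}\geq a(y_m)^{-1}$, while the contribution from $(y_{m-2},y_m)$ is controlled by $A$ after using $a(t)\leq a(y_m)$ to absorb $a(y_m)^{-\beta}$. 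The bound $B_{10}\lesssim A+B$ is proved by the mirror argument, using the sequence defined by $\int_{x_m}^\infty a^{-1}g=2^{-m}$, Lemma~\ref{AGD lemma} in place of Lemma~\ref{AGI lemma}, and splitting $\esup_{(y_m,\infty)}h$ at $y_{m+2}$.

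The main obstacle is the four-term expansion when $0<\beta<1$: in that regime $(P_1+P_2)^{\beta-1}\not\approx P_1^{\beta-1}+P_2^{\beta-1}$, and only the weaker equivalence $(P_1+P_2)^{\beta-1}\approx\max(P_1,P_2)^{\beta-1}$ holds. The remedy is to partition the $x$-integration as $\{P_1\geq P_2\}\cup\{P_1<P_2\}$: on the first region the expansion yields only $A$ and $B_9$, on the second only $B$ and $B_{10}$. The dyadic discretization above is then applied on each region separately, and the combined estimates give $\LHS\approx A+B$.
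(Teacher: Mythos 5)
The paper does not actually prove this lemma --- it is quoted verbatim from \cite[Lemma 3.13]{gmu} --- so your attempt can only be measured against the template you chose, namely the proof of Lemma~\ref{gluing.lem.4}. For $\b\ge 1$ your argument is sound: the pointwise four-term expansion is valid, the lower bound $A+B\ls\LHS$ is indeed immediate, and the dyadic blocking for $B_9$ and $B_{10}$ goes through along the lines of the estimates for $B_7$ and $B_8$.

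The genuine gap is in the case $0<\b<1$, and it is not where you located it. You correctly notice that the expansion of $(P_1+P_2)^{\b-1}$ breaks down, but your remedy only repairs the \emph{upper} bound. Writing $E_1=\{P_1\ge P_2\}$ and $E_2=\{P_1<P_2\}$, your partition gives $\LHS\ap A|_{E_1}+B_9|_{E_1}+B|_{E_2}+B_{10}|_{E_2}$, from which $\LHS\ls A+B$ follows once $B_9,B_{10}\ls A+B$. But the lower bound $A+B\ls\LHS$, which you declare ``immediate'' at the outset, is immediate only for $\b\ge1$: when $\b<1$ the integrand of $A$ on $E_2$ is $P_1^{\b-1}Q_2^{\b}g\ge P_2^{\b-1}Q_2^{\b}g\ap$ (the relevant part of the) $\LHS$ integrand, with the inequality pointing the wrong way, and symmetrically for $B$ on $E_1$. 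This is not a removable technicality: taking $a(x)=x$, $g$ with a small mass $\ve$ on $(0,1)$ and a large mass $N$ near a point $K\gg1$, and $h=\chi_{(1/2,1)}$, one checks that $\int_0^c\big(\int_0^{\i}{\mathcal A}(x,t)g(t)\,dt\big)^{\b-1}g(x)\,dx$ can be far smaller than $\frac1\b\big(\int_0^cg\big)^{\b}$, so $A|_{E_2}\ls\LHS$ can only be rescued by routing through the $Q_1$-part of the right-hand factor, i.e.\ by a further blocking argument of the same type as (but distinct from) your estimates for $B_9$ and $B_{10}$. Your proposal contains no such argument, so the equivalence is not established for $0<\b<1$. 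A secondary point: several monotonicity steps you transplant from Lemma~\ref{gluing.lem.4} (e.g.\ enlarging $\int_{y_{m-4}}^{x}g$ to $\int_0^xg$ under the exponent $\b-1$) reverse direction when $\b<1$ and must be replaced by exact evaluation of the dyadic blocks, such as $\int_{y_{m-4}}^{y_{m-2}}\big(\int_0^xg\big)^{\b-1}g(x)\,dx\ap2^{m\b}$, which holds for every $\b>0$.
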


\section{Multipliers}\label{multipliers}

To state further results we need the following definitions.
\begin{defi}
	Let $U$ be a continuous, strictly increasing function on $[0,\i)$ such that $U(0)=0$ and $\lim_{t \rw \infty} U(t) = \infty$. Then we say that $U$ is admissible.
\end{defi}

Let $U$ be an admissible function. We say that a $U$-quasiconcave function $\vp$ is non-degenerate if
$$
\lim_{t \rw 0+} \vp(t) = \lim_{t \rw \infty} \frac{1}{\vp(t)} = \lim_{t \rw \infty} \frac{\vp(t)}{U(t)} = \lim_{t \rw 0+} \frac{U(t)}{\vp(t)} = 0.
$$
The family of non-degenerate $U$-quasiconcave functions is denoted by $Q_U$.
\begin{defi}
	Let $U$ be an admissible function, and let $w$ be a non-negative measurable function on $(0,\i)$. We say that the function $\vp$, defined by 
	\begin{equation*}
	\vp(t)=U(t)\int_0^{\infty} \frac{w(\tau)\,d\tau}{U(\tau)+U(t)}, \qq t\in (0,\infty),
	\end{equation*}
	is a fundamental function of $w$ with respect to $U$. One will also say that $w(\tau)\,d\tau$ is a representation measure of $\vp$ with respect to $U$.
\end{defi}

\begin{rem}\label{nondegrem}
	Let $\vp$ be the fundamental function of $w$ with respect to $U$. Assume that
	\begin{equation*}
	\int_0^{\infty}\frac{w(\tau)\,d\tau}{U(\tau)+U(t)} < \infty, ~ t> 0, \qq \int_0^1 \frac{w(\tau)\,d\tau}{U(\tau)}=\int_1^{\infty}w(\tau)\,d\tau=\infty.
	\end{equation*}
	Then $\vp\in Q_{U}$.
\end{rem}

\begin{rem}\label{limsupcondition}
	Suppose that $\vp (x) < \infty$ for all $x \in (0,\infty)$, where $\vp$ is defined by
	\begin{equation*}
	\vp(x)=\esup_{t\in(0,x)}{U(t)} \esup_{\tau\in(t,\infty)}\frac{w(\tau)}{U(\tau)},~~t\in\I.
	\end{equation*}
	If
	$$
	\limsup_{t \rightarrow 0 +} w(t) = \limsup_{t \rightarrow +\infty} \frac{1}{w(t)} = \limsup_{t \rightarrow 0 +} \frac{U(t)}{w(t)} = \limsup_{t \rightarrow +\infty} \frac{w(t)}{U(t)} = 0,
	$$
	then
	$\vp\in Q_{U}$.
\end{rem}

We are going to use the following operators
$$
A_{q,p}: \mp^+ \I \rw \mp^+ \I \quad \mbox{and} \quad A_{q,p}^*: \mp^+ \I \rw \mp^+ \I 
$$
defined by
\begin{align*}
A_{q,p}(u)(x): = \bigg( \int_0^x u^q \bigg)^{-\frac{1}{p}} u^{\frac{q-p}{p}}(x) \quad \mbox{and} \quad A_{q,p}^*(u)(x): = \bigg( \int_x^{\infty} u^q \bigg)^{\frac{1}{p}} u^{\frac{p-q}{p}}(x)
\end{align*}
for all $x > 0$.

In order to state our results we will use the following notations:
$$
 V(x) : = \| v \|_{p' ,(0,x)} \quad \mbox{and} \quad {\mathcal V}(x,t):= \frac{V(x)}{V(x) + V(t)} \qquad (t > 0,\,x > 0).
$$

The following simple stataement allows us to reduce the characterization of the multiplier problem between weighted Copson function spaces  $\cop_{p_1,q_1}(u_1,v_1)$ and weighted Ces\`{a}ro function spaces $\ces_{p_2,q_2}(u_2,v_2)$ to the problem with three parameters and three weight functions.
\begin{prop}\label{prop.reduc}
	Let $0 <p_1,\,p_2,\,q_1,\,q_2 < \infty$ and $v_1,\,v_2 \in \W\I$, $u_1 \in \dual{\O_{q_1}}$ and $u_2 \in \O_{q_2}$. Then	
	$$
	\|f\|_{M\big(\cop_{p_1,q_1}(u_1,v_1),\ces_{p_2,q_2}(u_2,v_2)\big)}  = \big\|f^{p_1}\big\|_{M\bigg( \cop_{\frac{q_1}{p_1}}\big(u_1^{p_1}\big), \ces_{\frac{p_2}{p_1},\frac{q_2}{p_1}}\big(u_2^{p_1}, v_1^{-p_1}v_2^{p_1}\big) \bigg)}^{\frac{1}{p_1}}.
	$$	
\end{prop}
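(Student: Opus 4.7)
The proof is essentially a substitution argument, so my plan is to exhibit a bijection on $\mathfrak{M}^+(\I)$ that matches the two multiplier norms up to the power $p_1$.

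First, I would start from the definition
\[
\|f\|_{M(\cop_{p_1,q_1}(u_1,v_1),\,\ces_{p_2,q_2}(u_2,v_2))} = \sup_{g \not\sim 0} \frac{\|fg\|_{\ces_{p_2,q_2}(u_2,v_2)}}{\|g\|_{\cop_{p_1,q_1}(u_1,v_1)}},
\]
where the supremum is taken over $g \in \mathfrak{M}^+(\I)$ (with positive $\cop$-norm). The natural change of variable suggested by the target formula is $h := g^{p_1} v_1^{p_1}$, which gives a bijection on $\mathfrak{M}^+(\I)$ with inverse $g = h^{1/p_1} v_1^{-1}$ (recall that $v_1$ is a weight, hence a.e. finite and positive).

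Next, I would rewrite both norms in terms of $h$. For the denominator, Fubini-free manipulation gives
\[
\|g\|_{\cop_{p_1,q_1}(u_1,v_1)}^{q_1} = \int_0^{\infty} \bigg( \int_x^{\infty} g(t)^{p_1} v_1(t)^{p_1}\,dt \bigg)^{q_1/p_1} u_1(x)^{q_1}\,dx = \int_0^{\infty} \bigg( \int_x^{\infty} h(t)\,dt \bigg)^{q_1/p_1} u_1(x)^{q_1}\,dx,
\]
which is exactly $\|h\|_{\cop_{q_1/p_1}(u_1^{p_1})}^{q_1/p_1}$, so that $\|g\|_{\cop_{p_1,q_1}(u_1,v_1)}^{p_1} = \|h\|_{\cop_{q_1/p_1}(u_1^{p_1})}$. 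For the numerator, the key observation is that $f^{p_2} g^{p_2} v_2^{p_2} = (f^{p_1} h)^{p_2/p_1} v_1^{-p_2} v_2^{p_2}$, whence
\[
\|fg\|_{\ces_{p_2,q_2}(u_2,v_2)}^{q_2} = \int_0^{\infty} \bigg( \int_0^x (f^{p_1} h)(t)^{p_2/p_1}\bigl(v_1^{-p_1}v_2^{p_1}\bigr)(t)^{p_2/p_1}\,dt\bigg)^{q_2/p_2} u_2(x)^{q_2}\,dx,
\]
which is precisely $\|f^{p_1} h\|_{\ces_{p_2/p_1,\,q_2/p_1}(u_2^{p_1},\,v_1^{-p_1}v_2^{p_1})}^{q_2/p_1}$, so $\|fg\|_{\ces_{p_2,q_2}(u_2,v_2)}^{p_1} = \|f^{p_1} h\|_{\ces_{p_2/p_1,\,q_2/p_1}(u_2^{p_1},\,v_1^{-p_1}v_2^{p_1})}$.

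Finally, I would raise the original multiplier quotient to the $p_1$-st power, substitute the two identities above, and invoke the bijectivity of $g \leftrightarrow h$ to rewrite
\[
\|f\|_{M(\cop_{p_1,q_1}(u_1,v_1),\,\ces_{p_2,q_2}(u_2,v_2))}^{p_1} = \sup_{h \not\sim 0} \frac{\|f^{p_1} h\|_{\ces_{p_2/p_1,\,q_2/p_1}(u_2^{p_1},\,v_1^{-p_1}v_2^{p_1})}}{\|h\|_{\cop_{q_1/p_1}(u_1^{p_1})}} = \|f^{p_1}\|_{M(\cop_{q_1/p_1}(u_1^{p_1}),\,\ces_{p_2/p_1,\,q_2/p_1}(u_2^{p_1},\,v_1^{-p_1}v_2^{p_1}))},
\]
and then take $p_1$-th roots. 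There is no real obstacle here beyond careful bookkeeping of the exponents $p_1, p_2/p_1, q_1/p_1, q_2/p_1$ and of the weight transformations $u_i \mapsto u_i^{p_1}$, $(v_1,v_2) \mapsto v_1^{-p_1}v_2^{p_1}$; the only point worth noting is that the bijection $g \mapsto g^{p_1} v_1^{p_1}$ maps $\mathfrak{M}^+(\I)$ onto itself and sends zero-norm elements to zero-norm elements, which legitimates the change of supremum.
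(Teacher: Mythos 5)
Your proposal is correct and is essentially the same computation as in the paper: the paper's proof likewise substitutes $h = g^{p_1} v_1^{p_1}$ inside the supremum, rewrites the integrand as $[f^{p_1}h]^{p_2/p_1}[v_1^{-p_1}v_2^{p_1}]^{p_2/p_1}$ with weights $u_i^{p_1}$, and identifies the result as the three-weight multiplier norm raised to the power $1/p_1$. Your added remarks on the bijectivity of the substitution and the treatment of zero-norm elements are just a slightly more explicit bookkeeping of the same argument.
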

\begin{proof}
	Indeed:
	\begin{align*}
	\|f\|_{M\big(\cop_{p_1,q_1}(u_1,v_1),\ces_{p_2,q_2}(u_2,v_2)\big)} & = \sup_{g \not\sim 0} \frac{\|fg\|_{\ces_{p_2,q_2}(u_2,v_2)}} {\|g\|_{\cop_{p_1,q_1}(u_1,v_1)}} \\
	&= \sup_{g \not\sim 0} \frac{\bigg(\int_0^{\infty} \bigg(\int_0^t f^{p_2} g^{p_2} v_2^{p_2} \bigg)^{\frac{q_2}{p_2}} u_2^{q_2}(t) dt \bigg)^{\frac{1}{q_2}}} {\bigg(\int_0^{\infty} \bigg(\int_t^{\infty} g^{p_1}  v_1^{p_1} \bigg)^{\frac{q_1}{p_1}} u_1^{q_1}(t) dt \bigg)^{\frac{1}{q_1}}} \\
	& = \sup_{h \not\sim 0} \frac{\bigg(\int_0^{\infty} \bigg(\int_0^t f^{p_2} h^{\frac{p_2}{p_1}} v_1^{-p_2} v_2^{p_2} \bigg)^{\frac{q_2}{p_2}} u_2^{q_2}(t) dt \bigg)^{ \frac{1}{q_2} } } { \bigg( \int_0^{\infty} \bigg( \int_t^{\infty} h \bigg)^{ \frac{q_1}{p_1} } u_1^{q_1}(t) dt \bigg)^{ \frac{1}{q_1} }}\\
	& = \sup_{h \not\sim 0} \frac{\bigg(\int_0^{\infty} \bigg(\int_0^t \big[f^{p_1} h\big]^{\frac{p_2}{p_1}} \big[ v_1^{-p_1} v_2^{p_1} \big]^\frac{p_2}{p_1} \bigg)^{ \frac{p_1}{p_2} \frac{q_2}{p_1}} \big[u_2(t)^{p_1}\big]^\frac{q_2}{p_1} dt \bigg)^{ \frac{p_1}{q_2} \frac{1}{p_1} } } { \bigg( \int_0^{\infty} \bigg( \int_t^{\infty} h \bigg)^{ \frac{q_1}{p_1} } \big[u_1(t)^{p_1}\big]^{\frac{q_1}{p_1}} dt \bigg)^{\frac{p_1}{q_1} \frac{1}{p_1} }} \\
	& = \big\|f^{p_1}\big\|_{M\bigg( \cop_{\frac{q_1}{p_1}}\big(u_1^{p_1}\big), \ces_{\frac{p_2}{p_1},\frac{q_2}{p_1}}\big(u_2^{p_1}, v_1^{-p_1}v_2^{p_1}\big) \bigg)}^{\frac{1}{p_1}}.
	\end{align*} 
\end{proof}

Now we present and prove our main results. We start with simple cases.
\begin{thm}
Let $0< q =  p \le r=1 $, $v \in \W\I$, $u \in \dual{\O_{1}}$ and $w \in \O_{q}$. Then
\begin{equation*}
M(\cop_{r}(u),\ces_{p,q}(w,v)) = L_{p'}(\omega),
\end{equation*}
where
$$
\omega : = A_{1,1}(u) \cdot A_{p,p}^*(w)\cdot v.
$$

Moreover
\begin{align*}
\|f\|_{M(\cop_{r}(u),\ces_{p,q}(w,v))} \ap \, \|f\|_{p',\omega,(0,\infty)}.
\end{align*}
\end{thm}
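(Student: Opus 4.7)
My plan is to reduce both the source and target spaces to weighted Lebesgue spaces by Fubini, and then recognize the multiplier as a pointwise multiplier from a weighted $L_1$ into a weighted $L_p$, which in this setting admits an explicit computation in terms of an $L_{p'}$-norm.

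First, since $r=1$, Fubini gives
\begin{equation*}
\|g\|_{\cop_{r}(u)}=\int_0^{\infty}\!\int_t^{\infty}\!g(s)\,ds\,u(t)\,dt=\int_0^{\infty}g(s)U(s)\,ds,
\end{equation*}
where $U(s):=\int_0^s u = 1/A_{1,1}(u)(s)$, so $\cop_{r}(u) = L_1(U,(0,\infty))$. Similarly, with $p=q$,
\begin{equation*}
\|h\|_{\ces_{p,q}(w,v)}^p = \int_0^{\infty}\!\int_0^t(hv)^p(s)\,ds\,w(t)^p\,dt =\int_0^{\infty}(hv)^p(s)W_p(s)\,ds,
\end{equation*}
with $W_p(s):=\int_s^{\infty}w^p = A_{p,p}^*(w)(s)^p$; hence $\ces_{p,q}(w,v) = L_p\bigl(vW_p^{1/p}\bigr)$. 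Note that the hypotheses $u\in\dual{\O}_1$ and $w\in\O_q$ ensure that $U$ and $W_p$ are finite and positive a.e.

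Next, substituting $h:=gU$ in the definition of the multiplier quasi-norm,
\begin{equation*}
\|f\|_{M(\cop_{r}(u),\ces_{p,q}(w,v))} = \sup_{g\not\sim 0}\frac{\bigl\|f\,g\,v\,W_p^{1/p}\bigr\|_{p,\I}}{\|gU\|_{1,\I}} =\sup_{h\not\sim 0}\frac{\|\phi\,h\|_{p,\I}}{\|h\|_{1,\I}},
\end{equation*}
where $\phi:=f\cdot v\,W_p^{1/p}/U = f\cdot \omega$ is precisely $f$ multiplied by the weight stated in the theorem.

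The remaining task is to identify the pointwise multiplier norm from $L_1\I$ into $L_p\I$ with $\|\phi\|_{p',\I}$. For $p=1$ this is the familiar duality $M(L_1,L_1)=L_{\infty}$, and $p'=\infty$ matches. For $0<p<1$, raise to the $p$-th power and substitute $\eta:=h^p$; the inequality $\|\phi h\|_p \le C \|h\|_1$ becomes
\begin{equation*}
\int_0^{\infty}\phi^p\,\eta\;\le\;C^p\!\left(\int_0^{\infty}\eta^{1/p}\right)^{\!p}\qquad(\eta\ge 0).
\end{equation*}
Since $1/p>1$, standard $L^{1/p}$--$L^{1/(1-p)}$ duality (in the forward direction, $\eta\mapsto\eta$) yields
\begin{equation*}
\sup_{\eta\not\sim 0}\frac{\int\phi^p\eta}{\bigl(\int\eta^{1/p}\bigr)^p}=\|\phi^p\|_{1/(1-p),\I}=\|\phi\|_{p',\I}^{\,p},
\end{equation*}
using $p\cdot\tfrac{1}{1-p}=p'$. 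Therefore $\sup_h \|\phi h\|_p/\|h\|_1 = \|\phi\|_{p',\I}$, and combining with the identification $\phi=f\omega$ concludes
\begin{equation*}
\|f\|_{M(\cop_{r}(u),\ces_{p,q}(w,v))}\approx \|f\omega\|_{p',\I}=\|f\|_{p',\omega,\I}.
\end{equation*}
The only non-routine step is the $L_1\to L_p$ multiplier identification for $p<1$; the rest is Fubini and bookkeeping with $A_{1,1}(u)$ and $A^*_{p,p}(w)$.
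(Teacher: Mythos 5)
Your argument is correct, and it arrives at the same quantity as the theorem, in fact with genuine equality rather than mere equivalence. The paper's own proof is a one-line application of the embedding characterization \cite[Theorem 2.1]{gmu}: it writes $\|f\|_{M(\cop_{r}(u),\ces_{p,q}(w,v))}=\|\Id\|_{\cop_{r}(u)\rw\ces_{p,q}(w,fv)}$ and reads off the answer $\big\|\|u\|_{1,(0,\cdot)}^{-1}\|w\|_{p,(\cdot,\infty)}\big\|_{p',fv,\I}$ from that cited result. What you do instead is reprove the relevant special case of that embedding theorem from scratch: Fubini collapses $\cop_{1}(u)$ to $L_1(U)$ and $\ces_{p,p}(w,v)$ to $L_p(vW_p^{1/p})$ (this is exactly where the hypotheses $r=1$ and $p=q$ enter, and it is also why the result takes such a clean form only in this case), after which the substitution $h=gU$ and the converse H\"older inequality in $L_{1/p}$ (for $p<1$, with the paper's convention $p'=p/(1-p)$; the case $p=1$ being the usual $M(L_1,L_1)=L_\infty$) identify the multiplier norm as $\|f\omega\|_{p',\I}$. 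Your route is more elementary and self-contained, buys an exact identity instead of a two-sided bound, and makes visible that the hypotheses $u\in\dual{\O}_1$, $w\in\O_q$ are needed precisely so that $U$ and $W_p$ are positive and finite a.e.\ and the substitution is legitimate; the paper's route is shorter and consistent with the citation-based pattern used for all the harder parameter ranges later in the section. The only point worth flagging is cosmetic: since the supremum in the multiplier norm runs over $g\in\mp^+\I$, $g\not\sim 0$, you should note (as you implicitly do) that $h=gU$ ranges over the same class because $U>0$ a.e.
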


\begin{proof}
By \cite[Theorem 2.1]{gmu}, we have that
\begin{align*}
\|f\|_{M(\cop_{r}(u),\ces_{p,q}(w,v))} & = \|\Id\|_{\cop_{r}(u)\rightarrow \ces_{p,q}(w,f v)} \\
& \ap \big\| \|u\|_{1,(0,\cdot)}^{- 1} \|w\|_{p,(\cdot,\infty)} \big\|_{p', f v,\I} \\
& = \|f\|_{p', \omega, (0,\infty)}.
\end{align*}
\end{proof}

\begin{thm}
Let $0 < p, q, r < \infty$, $q \neq p$ and $p \leq r = 1$. Let  $v  \in \W\I$, $u \in \dual{\O_{r}}$ and $w \in \O_{q}$.

\begin{itemize}
\item[(i)] If $1\le q$, then
\begin{equation*}
M(\cop_{r}(u),\ces_{p,q}(w,v)) =  \ces_{p',\infty}(\omega_1,\omega_2),
\end{equation*}
where
$$
\omega_1 : = A_{p,p}^*(w), \quad \omega_2 : = A_{1,1}(u) \cdot v.
$$

Moreover
\begin{align*}
\|f\|_{M(\cop_{r}(u),\ces_{p,q}(w,v))} \ap \, \|f\|_{\ces_{p',\infty}(\omega_1,\omega_2)}.
\end{align*}

\item[(ii)] If $q < 1$, then
\begin{align*}
M(\cop_{r}(u),\ces_{p,q}(w,v)) =  \ces_{p', q'} (\omega_1, \omega_2),
\end{align*}
where
$$
\omega_1 : = A_{q,1}^* (w), \quad \omega_2 : = A_{1,1}(u) \cdot v.
$$

Moreover
\begin{align*}
\|f\|_{M(\cop_{r}(u),\ces_{p,q}(w,v))} \ap \, \|f\|_{\ces_{p', q'} (\omega_1, \omega_2)}.
\end{align*}
\end{itemize}
\end{thm}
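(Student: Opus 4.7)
The plan is to follow the strategy of the preceding theorem. First, by the identity $\|f\|_{M(X,Y)} = \|\Id\|_{X \to Y_f}$ from the introduction, I would rewrite
\begin{equation*}
\|f\|_{M(\cop_{r}(u),\ces_{p,q}(w,v))} = \|\Id\|_{\cop_{1}(u) \to \ces_{p,q}(w,fv)},
\end{equation*}
reducing the multiplier problem to an embedding characterization of Copson-into-Ces\`{a}ro type. Then I would apply the appropriate embedding theorem from \cite{gmu}: the preceding theorem used \cite[Theorem 2.1]{gmu} for the case $q = p \le r = 1$, and \cite{gmu} contains the analogous characterization in the two sub-cases $1 \le q$ and $q < 1$ corresponding exactly to (i) and (ii).

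The explicit form of the weights $\omega_1,\omega_2$ can be understood as follows. Since $r=1$, Fubini's theorem yields
\begin{equation*}
\|g\|_{\cop_{1}(u)} = \int_0^{\infty} g(s)\,U(s)\,ds, \qquad U(s) := \|u\|_{1,(0,s)},
\end{equation*}
so $\cop_{1}(u)$ is (up to the substitution $\phi = gU$) the weighted Lebesgue space $L_1(U)$; this explains the appearance of $\omega_2 = A_{1,1}(u)\,v = v/U$ as the weight of the inner $L_{p'}$-norm of $f$ in both cases. The outer weight $\omega_1$ comes from the embedding characterization: in case (i), since $q \ge 1$, the outer norm collapses to an essential supremum and one expects $\omega_1 = A^*_{p,p}(w) = \|w\|_{p,(\cdot,\infty)}$, giving $\|f\|_{\ces_{p',\infty}(\omega_1,\omega_2)}$; in case (ii), since $q<1$, the outer norm becomes a genuine $L_{q'}$-integral with $\omega_1 = A^*_{q,1}(w) = \|w\|^{q}_{q,(\cdot,\infty)}\,w^{1-q}$, giving $\|f\|_{\ces_{p',q'}(\omega_1,\omega_2)}$.

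The hard part will be verifying that the sharp form is exactly as claimed. A na\"{\i}ve Minkowski estimate in the integral defining $\|\phi\|_{\ces_{p,q}(w,V)}$ only produces the outer weight $A^*_{q,q}(w)$, not the $A^*_{p,p}(w)$ or $A^*_{q,1}(w)$ that appear in the theorem, so the sharp argument must pass through a more delicate route. I envisage discretizing the outer integral via the almost-geometrically monotone sequences of Lemmas \ref{AGD lemma} and \ref{AGI lemma}, then reorganizing the resulting double integral using the gluing lemmas of Section \ref{gluing} (in particular Lemma \ref{gluing.lem.4} in case (ii)), and matching against dual test functions built from indicator functions of intervals $(0,t)$ normalized by $U$ and $v$. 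I expect case (ii) to be the more delicate one, both because the target space is only quasi-Banach and because the exponent $q'=q/(1-q)$ combined with the iterated Hardy structure requires careful bookkeeping to land on the precise $\ces_{p',q'}(\omega_1,\omega_2)$ form.
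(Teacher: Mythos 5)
Your opening reduction is exactly the paper's: write $\|f\|_{M(\cop_{1}(u),\ces_{p,q}(w,v))}=\|\Id\|_{\cop_{1}(u)\rw\ces_{p,q}(w,fv)}$ and invoke the Copson-into-Ces\`aro embedding characterization from \cite{gmu} (Theorem~2.2 there, parts (i) and (ii) matching your two sub-cases). But the second half of your plan misdiagnoses where the work lies. Theorem~2.2 of \cite{gmu} is already a sharp \emph{two-sided} characterization, not a one-sided Minkowski bound, so there is nothing left to sharpen: in case (i) it gives $\sup_{t\in\I}\big\|\|u\|_{1,(0,\cdot)}^{-1}\big\|_{p',fv,(0,t)}\|w\|_{q,(t,\infty)}$, and in case (ii) it gives $\big(\int_{(0,\infty)}\big\|\|u\|_{1,(0,\cdot)}^{-1}\big\|_{p',fv,(0,t)}^{q'}\,d\big(-\|w\|_{q,(t,\infty)}^{q'}\big)\big)^{1/q'}$. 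The entire remaining proof is the bookkeeping you left unexecuted: absorb $\|u\|_{1,(0,\cdot)}^{-1}v=\omega_2$ into the inner $L_{p'}$-norm of $f$, and in (ii) rewrite the Stieltjes measure as $d\big(-\|w\|_{q,(t,\infty)}^{q'}\big)\ap\big[A_{q,1}^*(w)(t)\big]^{q'}\,dt$ to identify $\omega_1$ (this is a one-line differentiation using $q'=q/(1-q)$, not a "delicate" step). No discretization, no gluing lemmas, and no dual test functions occur in the paper's proof of this theorem; those tools enter only in the later theorems, where the cited embedding criteria involve the kernel ${\mathcal V}(x,t)$ and genuinely need anti-discretization.

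The tension you noticed -- that the computation naturally produces the outer weight $A_{q,q}^*(w)=\|w\|_{q,(\cdot,\infty)}$ rather than the $A_{p,p}^*(w)$ printed in part (i) -- is real, but you drew the wrong conclusion from it. The paper's own proof of (i) ends with $\sup_{t\in\I}\|f\|_{p',\omega_2,(0,t)}\|w\|_{q,(t,\infty)}$, i.e.\ with outer weight $A_{q,q}^*(w)$; the $A_{p,p}^*(w)$ in the statement appears to be a misprint (compare Theorem~\ref{maintheorem1.1}(i), where the analogous weight is $A_{q,q}^*(w)$). Your concern about $A_{q,1}^*(w)$ in (ii) is likewise unfounded, as shown above. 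Treating this mismatch as evidence that "the sharp argument must pass through a more delicate route," and then sketching a re-derivation of the embedding theorem via Lemmas~\ref{AGD lemma}, \ref{AGI lemma} and \ref{gluing.lem.4}, is the genuine gap: that route is unnecessary, and since you never carry it out, the identification of $\omega_1$ and $\omega_2$ -- which is the whole content of the proof -- is never actually established in your proposal.
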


\begin{proof}
{\rm (i)} Let $1 \le q$. By \cite[Theorem 2.2, (i)]{gmu}, we get that
\begin{align*}
\|f\|_{M(\cop_{r}(u),\ces_{p,q}(w,v))}  & = \|\Id\|_{\cop_{r}(u)\rightarrow \ces_{p,q}(w,f v)} \\
& \ap \sup_{t \in \I} \big\| \|u\|_{1,(0,\cdot)}^{-1} \big\|_{p',fv,(0,t)} \|w\|_{q,(t,\infty)} \\
& \ap \sup_{t \in \I} \big\| f \big\|_{p',\|u\|_{1,(0,\cdot)}^{-1} v, (0,t)} \|w\|_{q,(t,\infty)} \\
& = \big\| \| f \|_{p',\omega_2,(0,\cdot)} \big\|_{\infty,\omega_1,(0,\infty)} \\
& = \|f\|_{\ces_{p',\infty}(\omega_1,\omega_2)}.
\end{align*}

{\rm (ii)}  Let $q < 1$. By \cite[Theorem 2.2, (ii)]{gmu}, we obtain that
\begin{align*}
\|f\|_{M(\cop_{r}(u),\ces_{p,q}(w,v))}  & = \|\Id\|_{\cop_{r}(u)\rightarrow \ces_{p,q}(w,f v)} \\
& \ap \bigg(\int_{(0,\infty)} \big\| \|u\|_{1,(0,\cdot)}^{-1}	\big\|_{p',fv,(0,t)}^{q'} d \,\bigg(- \|w\|_{q,(t,\infty)}^{q'} \bigg)\bigg)^{\frac{1}{q'}} \\
& = \bigg(\int_{(0,\infty)} \| f \|_{p', \|u\|_{1,(0,\cdot)}^{-1}\,v,(0,t)}^{q'} d \,\bigg(- \|w\|_{q,(t,\infty)}^{q'} \bigg)\bigg)^{\frac{1}{q'}} \\
& = \big\| \| f \|_{p',\omega_2,(0,\cdot)} \big\|_{q',\omega_1,(0,\infty)} \\
& = \|f\|_{\ces_{ p', q'} (\omega_1,\omega_2)}.
\end{align*}
\end{proof}

\begin{thm}
Let $0 < p,q,r < \infty$, $r \neq 1$ and $p = q \leq 1$. Let $v \in \W\I$, $u \in \dual{\O_{r}}$ and $w \in \O_{q}$.
\begin{itemize}
\item[(i)] If $r \le p$, then
\begin{equation*}
M(\cop_{r}(u),\ces_{p,q}(w,v)) =  \ces_{p',\infty}(\omega_1, \omega_2),
\end{equation*}
where
$$
\omega_1 : = A_{r,r}(u), \quad \omega_2 : = A_{p,p}^* (w) \cdot v.
$$
Moreover,
\begin{align*}
\|f\|_{M(\cop_{r}(u),\ces_{p,q}(w,v))} \ap \, \|f\|_{\ces_{p',\infty}(\omega_1, \omega_2)}.
\end{align*}

\item[(ii)] If $ p < r$, then
\begin{align*}
M(\cop_{r}(u),\ces_{p,q}(w,v))  & \\
&\hspace{-4cm}= \left\{
\begin{array}{cc}
\ces_{p', r \rw p}(\omega_1, \omega_2) \bigcap L_{p'}(\omega_2) & ~ \mbox{if}  \qq \|u\|_{r,(0,\infty)} < \infty, \\
\ces_{p', r \rw p}(\omega_1, \omega_2)   & ~ \mbox{if}  \qq \|u\|_{r,(0,\infty)} =\infty,
\end{array}
\right.
\end{align*}
where
$$
\omega_1 : = A_{r,p}(u), \quad  \omega_2 : = A_{p,p}^* (w) v.
$$
Moreover,
$$
\|f\|_{M(\cop_{r}(u),\ces_{p,q}(w,v))} \ap \|f\|_{\ces_{p', r \rw p}(\omega_1, \omega_2)} +
\|u\|_{r,(0,\infty)} \|f\|_{p',\omega_2,(0,\infty)}.
$$
\end{itemize}
\end{thm}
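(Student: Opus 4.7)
The plan is to mirror the strategy of the two preceding theorems. First, I would rewrite the multiplier norm as an embedding constant via
$$
\|f\|_{M(\cop_{r}(u),\ces_{p,q}(w,v))} = \|\Id\|_{\cop_{r}(u)\rightarrow \ces_{p,q}(w,f v)},
$$
and then invoke the characterization of the embedding $\cop_{r}(u) \hookrightarrow \ces_{p,q}(w,fv)$ from \cite{gmu} in the parameter regime $p = q \le 1$, $r \ne 1$. Since $p \le 1$, the inner functional is a $p$-quasi-norm, and the relevant embedding result from \cite{gmu} produces either a supremum-type or an $L_{r\rw p}$-type expression depending on whether $r \le p$ or $p < r$.

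For part (i), where $r \le p$, the applicable embedding theorem should yield
$$
\|\Id\|_{\cop_{r}(u)\rightarrow \ces_{p,q}(w,f v)} \ap \esup_{t > 0} \|u\|_{r,(0,t)}^{-1}\, \|f\|_{p',\, \|w\|_{p,(\cdot,\infty)} v,\, (0,t)}.
$$
Since $A_{r,r}(u)(t) = \|u\|_{r,(0,t)}^{-1}$ and $A_{p,p}^{*}(w)(t) = \|w\|_{p,(t,\infty)}$, the right-hand side coincides with $\|f\|_{\ces_{p',\infty}(\omega_1,\omega_2)}$ for the $\omega_1,\omega_2$ as stated. For part (ii), where $p < r$, the index $r \rw p$ is finite and the corresponding embedding theorem produces an $L_{r\rw p}$-type integral. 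I expect the outcome to be
$$
\|\Id\|_{\cop_{r}(u)\rightarrow \ces_{p,q}(w,f v)} \ap \|f\|_{\ces_{p', r \rw p}(\omega_1,\omega_2)} + \|u\|_{r,(0,\infty)}\, \|f\|_{p',\omega_2,(0,\infty)},
$$
with the last term being non-trivial only when $\|u\|_{r,(0,\infty)} < \infty$. This endpoint contribution is a standard artifact in Hardy-type inequalities on $(0,\infty)$: when $u^r$ is integrable, the quantity $\bigl(\int_0^x u^r\bigr)^{-1/p}$ does not decay at infinity, so a separate term controlling the $L_{p'}(\omega_2)$-behavior on the whole line is needed.

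The main obstacle lies in two places. First, locating the precise embedding theorem in \cite{gmu} valid for the parameters $p = q \le 1$, $r \ne 1$: the results there are split into several subcases, and the subcase $p < r$ requires careful identification of the exponent $r \rw p$ and of the supplementary "full-line" term. Second, verifying that the resulting embedding constant can be reformulated as the $\ces$-space quasi-norm with the stated weights; this is a weight-bookkeeping calculation analogous to those carried out in the proofs of the previous two theorems, and it hinges on the identities $A_{r,r}(u)(t) = \|u\|_{r,(0,t)}^{-1}$, $A_{r,p}(u)(x) = \bigl(\int_0^x u^r\bigr)^{-1/p} u^{(r-p)/p}(x)$, and $A_{p,p}^{*}(w)(t) = \|w\|_{p,(t,\infty)}$. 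Once the correct embedding theorem from \cite{gmu} is applied and these algebraic identifications are made, the two cases of the statement follow directly.
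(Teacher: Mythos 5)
Your proposal follows essentially the same route as the paper: both reduce the multiplier norm to the embedding constant $\|\Id\|_{\cop_{r}(u)\rightarrow \ces_{p,q}(w,fv)}$, invoke the corresponding two-case embedding characterization from \cite{gmu} (Theorem 2.3 there), and then perform the same weight bookkeeping via $A_{r,r}(u)$, $A_{r,p}(u)$ and $A_{p,p}^*(w)$, including the supplementary full-line term when $\|u\|_{r,(0,\infty)}<\infty$. One small point: the coefficient of that extra term should be $\|u\|_{r,(0,\infty)}^{-1}$ (so that it vanishes, under the convention $1/\infty=0$, when $\|u\|_{r,(0,\infty)}=\infty$), which is what the paper's proof actually derives; the form $\|u\|_{r,(0,\infty)}$ you wrote merely reproduces a typo in the theorem statement itself.
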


\begin{proof}
{\rm (i)} Let $r \le p$. By \cite[Theorem 2.3, (i)]{gmu}, we get that
\begin{align*}
\|f\|_{M(\cop_{r}(u),\ces_{p,q}(w,v))} & \ap \sup_{t \in \I}  \|u\|_{r,(0,t)}^{-1} \,\big\| \|w\|_{p,(\cdot,\infty)} \big\|_{1 \rw p,f v,(0,t)} \\
& = \sup_{t \in \I}  \|u\|_{r,(0,t)}^{-1} \,\big\| f \big\|_{p',\|w\|_{p,(\cdot,\infty)}\,v,(0,t)} \\
& = \big\| \|f\|_{p',\omega_2,(0,\cdot)} \big\|_{\infty,\omega_1,(0,\infty)} \\
& = \|f\|_{\ces_{p',\infty}(\omega_1,\omega_2)}.
\end{align*}

{\rm (ii)} Let $p < r$. By \cite[Theorem 2.3, (ii)]{gmu}, we get that
\begin{align*}
\|f\|_{M(\cop_{r}(u),\ces_{p,q}(w,v))} \ap & \, \bigg(\int_{(0,\infty)} \big\| \|w\|_{p,(\cdot,\infty)}\big\|_{1 \rw p, f \, v,(0,t)}^{r \rw p}  d \,\bigg( -\|u\|_{r,(0,t)}^{- r \rw p}\bigg)\bigg)^{\frac{1}{r \rw p}} \\
& + \|u\|_{r,(0,\infty)}^{-1} \,\big\| \|w\|_{p,(\cdot,\infty)}\big\|_{1 \rw p, f \, v,(0,\infty)} \\
= & \, \bigg(\int_{(0,\infty)}\| f \|_{p', \|w\|_{p,(\cdot,\infty)}\, v,(0,t)}^{r \rw p}  d \,\bigg( -\|u\|_{r,(0,t)}^{- r \rw p}\bigg)\bigg)^{\frac{1}{r \rw p}} \\
& + \|u\|_{r,(0,\infty)}^{-1} \,\| f \|_{p', \|w\|_{p,(\cdot,\infty)} \, v,(0,\infty)} \\
= & \, \|f\|_{\ces_{p',r\rw p_2}(\omega_1,\omega_2)} + \|u\|_{r,(0,\infty)}^{-1} \,\| f \|_{p',\omega_2,(0,\infty)}. 
\end{align*}
\end{proof}

\begin{thm}\label{maintheorem1.1}
Let $0 <p,q,r < \infty$, $p < 1$ and $ r \leq p < q $. Let $v \in \W\I$, $u \in \dual{\O_{r}}$ and $w \in \O_{q}$. Assume that $V$ is admissible and $\vp_1 \in Q_{V^{{1} / p'}}$,
where
$$
\vp_1(x):= \esup_{t\in \I} {\mathcal V}(x,t) V(t) \|u\|_{r,(0,t)}^{-1}.
$$
\begin{itemize}
\item[(i)] If $ 1 \leq q$, then
\begin{equation*}
M(\cop_{r}(u),\ces_{p,q}(w,v)) = \ces_{p',\infty}(\omega,v),
\end{equation*}
where
$$
\omega : = A_{r,r}(u) \cdot A_{q,q}^*(w).
$$
Moreover,
\begin{align*}
\|f\|_{M(\cop_{r}(u),\ces_{p,q}(w,v))} \ap \|f\|_{\ces_{p',\infty}(\omega,v)}.
\end{align*}

\item[(ii)] If $ q < 1$, then
\begin{equation*}
M(\cop_{r}(u),\ces_{p,q}(w,v)) = \ces_{p',\infty}(\omega_1,v) \cap
\ces_{p',q',\infty}(\omega_2,\omega_3,v),
\end{equation*}
where
\begin{align*}
\omega_1 : = A_{r,r}(u) \cdot A_{q,q}^*(w),  \quad
\omega_2 : = A_{r,r}(u), \quad \omega_3 : = A_{q,1}^* (w).
\end{align*}

Moreover
\begin{align*}
\|f\|_{M(\cop_{r}(u),\ces_{p,q}(w,v))} \ap \|f\|_{\ces_{p',\infty}(\omega_1,v)} + 
\|f\|_{\ces_{p',q',\infty}(\omega_2,\omega_3,v)}.
\end{align*}
\end{itemize}
\end{thm}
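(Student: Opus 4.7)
The starting point is the same identity used throughout the preceding theorems, namely
\[
\|f\|_{M(\cop_r(u),\ces_{p,q}(w,v))} = \|\Id\|_{\cop_r(u)\rightarrow \ces_{p,q}(w,fv)}.
\]
The plan is therefore to invoke the embedding characterization of $\cop_r(u)\hra\ces_{p,q}(w,\tilde v)$ from \cite{gmu} in the range $p<1$, $r\le p<q$, applied with $\tilde v = fv$, and then to simplify the resulting expression by a suitable gluing lemma from Section \ref{gluing}.

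For this range of parameters the embedding constant is characterized in \cite{gmu} by an expression built out of the kernel ${\mathcal V}(x,t)=V(x)/(V(x)+V(t))$, the two factors $\|u\|_{r,(0,\cdot)}^{-1}$ and $\|w\|_{q,(\cdot,\infty)}$, and the outer weight $\tilde v$. The assumptions that $V$ is admissible and that $\vp_1\in Q_{V^{1/p'}}$ are precisely the non-degeneracy conditions under which that embedding theorem applies. After substituting $\tilde v=fv$ one obtains a double essential supremum (when $q\ge 1$) or a mixed supremum-integral (when $q<1$) in which one factor has the form $\esup_t {\mathcal V}(x,t)\|u\|_{r,(0,t)}^{-1}$, contributed by the $\cop_r(u)$ side, and the other has the form $\esup_t {\mathcal V}(t,x)\|w\|_{q,(t,\infty)}$ or an $L^{q'}$-integral of ${\mathcal V}(t,x)$-powers against $d(-\|w\|_{q,(t,\infty)}^{q'})$, contributed by the $\ces_{p,q}$ side.

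In part (i), the case $q\ge 1$, this is the situation treated by Lemma \ref{gluing.lem.1} with $a=V$, $g(t)=\|u\|_{r,(0,t)}^{-1}$ and $h(t)=\|w\|_{q,(t,\infty)}$. The lemma decomposes the double supremum into the sum of two terms, and after absorbing the outer weight $\|f\|_{p',v,(0,x)}$ on both sides one sees that the first term is exactly the $\ces_{p',\infty}(\omega,v)$ norm with $\omega = A_{r,r}(u)\cdot A_{q,q}^*(w)$, while monotonicity of $V$ and of $\|u\|_{r,(0,\cdot)}^{-1}$ shows that the second term is dominated by the first. In part (ii), the case $q<1$, the $L^{q'}$-structure of the $\ces_{p,q}$-side factor forces the use of Lemma \ref{gluing.lem.2} instead. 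The resulting decomposition gives two genuinely independent contributions: one is again identified with the $\ces_{p',\infty}(\omega_1,v)$ norm, while the second, after rewriting the $V$-weighted $L^{q'}$ integral of $\|w\|_{q,(\cdot,\infty)}$ in terms of $A_{q,1}^*(w)$, matches the three-parameter $\ces_{p',q',\infty}(\omega_2,\omega_3,v)$ norm of Definition \ref{defi.2.2}. The intersection structure on the right-hand side corresponds precisely to the sum of these two terms.

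The main obstacle lies in part (ii): one must perform the bookkeeping that takes the output of the gluing lemma and reassembles it into the iterated norm of Definition \ref{defi.2.2} with the correct inner/outer weight pairing $(\omega_2,\omega_3,v)$. In particular, one has to justify the interchange that identifies the $q'$-integral against $d(-\|w\|_{q,(t,\infty)}^{q'})$ with the $L^{q'}$-norm weighted by $A_{q,1}^*(w)$, and to verify that the two terms produced by Lemma \ref{gluing.lem.2} are genuinely non-comparable, so that the intersection in the conclusion cannot be reduced to a single Ces\`{a}ro-type space. Part (i) is easier, and once the reduction via Lemma \ref{gluing.lem.1} is in place the argument proceeds analogously to the proofs of the simpler theorems above.
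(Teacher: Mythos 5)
Your proposal follows the paper's own proof essentially verbatim: reduce to the embedding $\cop_{r}(u)\hra\ces_{p,q}(w,fv)$, invoke the characterization of \cite[Theorem 2.8]{gmu}, and glue via Lemma \ref{gluing.lem.1} in case (i) and Lemma \ref{gluing.lem.2} in case (ii), identifying the second output term of the latter with the $\ces_{p',q',\infty}(\omega_2,\omega_3,v)$ norm through $A_{q,1}^*(w)$. The only slip is that the factor supplied by the embedding theorem is $\esup_{t}\mathcal V(x,t)\,V(t)\,\|u\|_{r,(0,t)}^{-1}$, so the gluing lemmas must be applied with $g(t)=V(t)\|u\|_{r,(0,t)}^{-1}$ rather than $g(t)=\|u\|_{r,(0,t)}^{-1}$; with that correction the two terms in case (i) coincide via \eqref{Fubini.1} and the argument goes through exactly as you describe (the non-comparability check you mention in case (ii) is not needed for the stated equivalence).
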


\begin{proof}
{\rm (i)} By \cite[Theorem 2.8, (i)]{gmu}, we have that
\begin{align*}
\|I\|_{\cop_{r}(u) \rw \ces_{p,q}(w,v)}\ap \sup_{x \in (0,\infty)} \bigg(\esup_{t\in \I} {\mathcal V}(x,t) V(t) \|u\|_{r,(0,t)}^{-1}\bigg) \bigg( \sup_{t\in \I} {\mathcal V}(t,x) \|w\|_{q,(t,\infty)} \bigg)
\end{align*}

Using Lemma~\ref{gluing.lem.1} with
$$
a(t) = V(t), \quad g(t) =  V(t)  \|u\|_{r,(0,t)}^{-1}, \quad \mbox{and} \quad h(t) = \|w\|_{q,(t,\infty)} \quad (t > 0),
$$
in view of \eqref{Fubini.1}, we obtain that
\begin{align*}
\|\Id\|_{\cop_{r}(u) \rw \ces_{p,q}(w,v)}  \ap  \esup_{x\in \I} V(x) \|u\|_{r,(0,x)}^{-1} \|w\|_{q,(x,\infty)}.
\end{align*}

Therefore,
\begin{align*}
\|f\|_{M(\cop_{r}(u), \ces_{p,q}(w,v))} & \ap \esup_{x\in \I} \|v f\|_{p',(0,x)} \|u\|_{r,(0,x)}^{-1} \|w\|_{q,(x,\infty)} \\
& = \esup_{x\in \I} \| f\|_{p',v,(0,x)} \|u\|_{r,(0,x)}^{-1} \|w\|_{q,(x,\infty)}\\
&=\|f\|_{\ces_{p',\infty}(\omega,v)}.
\end{align*}

{\rm (ii)} By \cite[Theorem 2.8, (ii)]{gmu}, we have that
\begin{align*}
\| \Id \|_{\cop_{r}(u) \rw \ces_{p,q}(w,v)} &\\
&\hspace{-4cm} \ap \esup_{x\in \I} \bigg( \esup_{t\in \I} {\mathcal V}(x,t) V(t) \|u\|_{r,(0,t)}^{-1} \bigg) \bigg( \int_0^\infty {\mathcal V}(t,x)^{q'}  \bigg( \int_t^\i w^q \bigg)^{q'} w^q(t) dt \bigg)^{\frac{1}{q'}}.
\end{align*}
Using Lemma \ref{gluing.lem.2} with
$$
\b = q', ~ a(t) = V(t), ~ g(t) = V(t)  \|u\|_{r,(0,t)}^{-1}, ~ \mbox{and} ~ h(t) = \bigg( \int_t^\i w^{q} \bigg)^{q'} w^q(t) dt \quad (t > 0),
$$
in view of \eqref{Fubini.1}, we arrive at
\begin{align*}
\|\Id\|_{\cop_{r}(u) \rw \ces_{p,q}(w,v)}  & \\
& \hspace{-3cm} \ap \esup_{x\in \I} V(x) \|u\|_{r,(0,x)}^{-1} \bigg( \int_x^\i \bigg( \int_t^\i w^{q} \bigg)^{q'} w^{q}(t) dt \bigg)^{\frac{1}{q'}} \\
& \hspace{-2.5cm} + \esup_{x\in \I} \|u\|_{r,(0,x)}^{-1} \bigg( \int_0^x V(t)^{q'} \bigg( \int_t^\i w^{q} \bigg)^{q'} w^{q}(t) dt\bigg)^{\frac{1}{q'}}  \\
& \hspace{-3cm} \ap \esup_{x\in \I} V(x) \|u\|_{r,(0,x)}^{-1} \|w\|_{q,(x,\infty)} \\
& \hspace{-2.5cm}  + \esup_{x\in \I} \|u\|_{r,(0,x)}^{-1} \bigg( \int_0^x V(t)^{q'} \bigg( \int_t^\i w^q \bigg)^{q'} w^q(t) dt\bigg)^{\frac{1}{q'}}.
\end{align*}
Hence 
\begin{align*}
\|f\|_{M(\cop_{r}(u), \ces_{p,q}(w,v))} & \\
& \hspace{-3cm} \ap \esup_{x\in \I} \|v f\|_{p',(0,x)} \|u\|_{r,(0,x)}^{-1} \|w\|_{q,(x,\infty)}\\
& \hspace{-2.5cm} + \esup_{x\in \I} \|u\|_{r,(0,x)}^{-1} \bigg( \int_0^x \|v f\|_{p',(0,t)}^{q'} \bigg( \int_t^\i w^q \bigg)^{q'} w^q(t) dt\bigg)^{\frac{1}{q'}} 
\\
& \hspace{-3cm} = \esup_{x\in \I} \|f\|_{p',v,(0,x)} \|u\|_{r,(0,x)}^{-1} \|w\|_{q,(x,\infty)} \\
& \hspace{-2.5cm} + \esup_{x\in \I} \|u\|_{r,(0,x)}^{-1} \bigg( \int_0^x \|f\|_{p', v, (0,t)}^{q'} \bigg( \int_t^\i w^q \bigg)^{q'} w^q(t) dt\bigg)^{\frac{1}{q'}} 
\\
& \hspace{-3cm} = \bigg\| \|f\|_{p',v,(0,x)} \bigg\|_{\infty,\omega_1,(0,\infty)} + 
\bigg\| \big\| \|f\|_{p',v, (0,t)} \big\|_{q', \omega_3,(0,x)} \bigg\|_{\infty,\omega_2,(0,\infty)} \\
& \hspace{-3cm} =  \|f\|_{\ces_{p',\infty}(\omega_1, v)} + 
\|f\|_{\ces_{p',q',\infty}(\omega_2,\omega_3,v)}.
\end{align*}
\end{proof}

\begin{thm}\label{maintheorem1.2}
Let $0 < p,q,r < \infty$, $ p < 1$ and $p < \min\{r,q\}$. Let $v \in \W\I$, $u \in \dual{\O_{r}}$ and $w \in \O_{q}$. Assume that $V$ is admissible and $ \vp_2 \in Q_{V^{{1} / {p'}}}$, where
$$
\vp_2(x):= \bigg(\int_{(0,\infty)} [{\mathcal V}(x,t) V(t)]^{r \rw p} d\bigg( - \|u\|_{r,(0,t)}^{- r \rw p} \bigg) \bigg)^{\frac{1}{r \rw p}}.
$$

{\rm (i)} If $\max\{1,r\} \leq q$, then
\begin{align*}
M(\cop_{r}(u),\ces_{p,q}(w,v)) & \\
& \hspace{-3cm} = \left\{
\begin{array}{cc}
\ces_{p', r \rw p, \infty}(\omega_1, \omega_2,v) \bigcap \ces_{p', \infty}(\omega_3,v) \bigcap \ces_{p',\infty}(\omega_1, v) & ~ \mbox{if} ~ \|u\|_{r,(0,\infty)} < \infty, \\
\ces_{p', r \rw p, \infty}(\omega_1, \omega_2,v) \bigcap \ces_{p', \infty}(\omega_3,v) & ~ \mbox{if} ~  \|u\|_{r,(0,\infty)} =\infty.
\end{array}
\right.
\end{align*} 
where
$$
\omega_1 : = A_{q,q}^* (w), \quad \omega_2 : = A_{r,p}(u), \quad \omega_3 : = A_{r \rw p,r \rw p}^* (A_{r,p}(u)) \cdot A_{q,q}^* (w).
$$
Moreover, 
\begin{align*}
\|f\|_{M(\cop_{r}(u),\ces_{p,q}(w,v))} \ap \|f\|_{\ces_{p', r \rw p, \infty}(\omega_1, \omega_2,v) } + \|f\|_{\ces_{p', \infty}(\omega_3,v)}+ \|u\|_{r,(0,\infty)}^{-1} \|f\|_{\ces_{p',\infty}(\omega_1, v)}.
\end{align*}
	
{\rm (ii)} If $1\le q < r$, then
\begin{align*}
M(\cop_{r}(u),\ces_{p,q}(w,v)) & \\
& \hspace{-3cm} = \left\{
\begin{array}{cc}
\ces_{p',r \rw p, r \rw q}(\omega_1,\omega_2,v) \bigcap \ces_{p',\infty, r \rw q}(\omega_3,\omega_4,v) \bigcap \ces_{p',\infty}(\omega_4, v) & ~ \mbox{if}  ~ \|u\|_{r,(0,\infty)} < \infty, \\
\ces_{p',r \rw p, r \rw q}(\omega_1,\omega_2,v) \bigcap \ces_{p',\infty, r \rw q}(\omega_3,\omega_4,v)   & ~ \mbox{if}  ~ \|u\|_{r,(0,\infty)} =\infty,
\end{array}
\right.
\end{align*}
where
$$
\omega_1 : = A_{q,r}^* (w), \quad \omega_2 : = A_{r,p}(u), \quad \omega_3 : = A_{q,q}^*(w), \quad \omega_4:=\bigg( A_{r \rw p,r \rw p}^* (A_{r,p}(u))\bigg)^{\frac{r \rw p}{q \rw p}} \cdot A_{r,p}(u)^{\frac{r \rw p}{r \rw q}}.
$$
Moreover
\begin{align*}
\|f\|_{M(\cop_{r}(u),\ces_{p,q}(w,v))} & \\
& \hspace{-2cm}\ap  \|f\|_{\ces_{p',r \rw p, r \rw q}(\omega_1,\omega_2,v)} + \|f\|_{\ces_{p',\infty, r \rw q}(\omega_3,\omega_4,v)}  + \|u\|_{r,\I}^{-1} \|f\|_{\ces_{p',\infty}(\omega_4, v)}.
\end{align*}

{\rm (iii)} If $r \le q < 1$, then
\begin{align*}
M(\cop_{r}(u),\ces_{p,q}(w,v)) & \\
& \hspace{-3.5cm} = \left\{
\begin{array}{cc}
\ces_{p',r \rw p, \infty}(\omega_1,\omega_2,v) \bigcap \ces_{p',q', \infty}(\omega_3,\omega_4,v) \bigcap \ces_{p',q'}(\omega_4,v)) & ~ \mbox{if}  ~ \|u\|_{r,(0,\infty)} < \infty, \\
\ces_{p',r \rw p, \infty}(\omega_1,\omega_2,v) \bigcap \ces_{p',q', \infty}(\omega_3,\omega_4,v)   & ~ \mbox{if}  ~ \|u_1\|_{q_1,(0,\infty)} = \infty,
\end{array}
\right.
\end{align*}
where
$$
\omega_1 : = A_{q,q}^* (w), \quad \omega_2 : = A_{r,p}(u), \quad \omega_3 : A_{r \rw p,r \rw p}^* (A_{r,p}(u)), \quad \omega_4:= A_{q,1}^*(w).
$$
Moreover
\begin{align*}
\|f\|_{M(\cop_{r}(u),\ces_{p,q}(w,v))} \ap \|f\|_{\ces_{p',r \rw p, \infty}(\omega_1,\omega_2,v)} + \|f\|_{\ces_{p',q', \infty}(\omega_3,\omega_4,v)}  +  \|u\|_{r,\I}^{-1} \|f\|_{\ces_{p',q'}(\omega_4,v)}.
\end{align*}

{\rm (iv)} If $q < \min\{1,r\}$, then
\begin{align*}
M(\cop_{r}(u),\ces_{p,q}(w,v)) & \\
& \hspace{-4cm} = \left\{
\begin{array}{cc}
\ces_{p', r \rw p, r \rw q}(\omega_1, \omega_2, v) \bigcap \ces_{p', q', r\rw q}(\omega_3, \omega_4, v) \bigcap \ces_{p', q'}(\omega_4, v)) & ~ \mbox{if}  ~ \|u\|_{r,(0,\infty)} < \infty, \\
\ces_{p', r \rw p, r \rw q}(\omega_1, \omega_2, v) \bigcap \ces_{p', q', r\rw q}(\omega_3, \omega_4, v)  & ~ \mbox{if}  ~ \|u\|_{r,(0,\infty)} =\infty,
\end{array}
\right.
\end{align*}
where
$$
\omega_1 : = A_{q,r}^* (w), \quad \omega_2 : = A_{r,p}(u), \quad \omega_3 : =  \bigg( A_{r \rw p,r \rw p}^* (A_{r,p}(u))\bigg)^{\frac{r \rw p}{q \rw p}} \cdot A_{r,p}(u)^{\frac{r \rw p}{r \rw q}}, \quad \omega_4 := A_{q,1}^*(w).
$$
    
Moreover
\begin{align*}
\|f\|_{M(\cop_{r}(u),\ces_{p,q}(w,v))} \ap \|f\|_{\ces_{p', r \rw p, r \rw q}(\omega_1, \omega_2, v)} + \|f\|_{\ces_{p', q', r\rw q}(\omega_3, \omega_4, v)}  +  \|u\|_{r,\I}^{-1} \|f\|_{\ces_{p', q'}(\omega_4, v)}.
\end{align*}    	
\end{thm}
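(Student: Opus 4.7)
The plan is to follow the template used for the simpler cases above, namely to rewrite the multiplier norm as an embedding norm
\[
\|f\|_{M(\cop_{r}(u),\ces_{p,q}(w,v))} = \|\Id\|_{\cop_{r}(u)\rw \ces_{p,q}(w,fv)},
\]
and then invoke the embedding characterizations from \cite{gmu} for $\cop_{r}(u)\hookrightarrow \ces_{p,q}(w, fv)$ under the assumption $p<\min\{r,q\}$. Those characterizations will produce an expression of the form
\[
\esup_{x\in\I}\Big(\Phi(x)\Big)^{1/\alpha}\Big(\Psi(x,f)\Big)^{1/\beta}
\]
or a similar integral analogue, where $\Phi(x)$ is built from the weight pair $(u,V)$ through the kernel $\mathcal V(x,t)$, and $\Psi(x,f)$ is built from $fv$ and $w$ through $\mathcal V(t,x)$. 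Because $V$ is admissible and $\vp_2\in Q_{V^{1/p'}}$, the function $x\mapsto\Phi(x)$ is $V^{1/p'}$-quasiconcave and can be handled by the gluing machinery of Section~\ref{gluing}.

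The second step is to apply the appropriate gluing lemma to decouple the $\mathcal V(x,t)$ and $\mathcal V(t,x)$ factors. Specifically, I would set $a = V$ and, depending on which of $r\rw p$, $q$, $q\rw p$, $q\rw r$ lie above or below $1$, choose:
\begin{itemize}
\item for case (i) ($\max\{1,r\}\le q$, giving an $\esup$ on the $w$-side and an integral on the $u$-side), Lemma~\ref{gluing.lem.3};
\item for case (ii) ($1\le q<r$, integrals on both sides), Lemma~\ref{gluing.lem.0} with $\beta=r\rw p$ and $\alpha=r\rw q$;
\item for case (iii) ($r\le q<1$, an $\esup$ on the $u$-side and an integral on the $w$-side), Lemma~\ref{gluing.lem.2};
\item for case (iv) ($q<\min\{1,r\}$, integrals on both sides with $q<1$), Lemma~\ref{gluing.lem.0} again, but now with $\alpha=q'$ playing the role previously played by $r\rw q$.
\end{itemize}
Each application converts the coupled $\mathcal V$-kernel expression into a sum $A+B$ (plus, when $\|u\|_{r,\I}<\infty$, a third boundary term coming from the endpoint contribution of Lemma~\ref{AGI lemma}). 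By construction the first summand $A$ reproduces the weight triple $(\omega_1,\omega_2)$ listed in the statement, and the second summand $B$ reproduces $(\omega_3,\omega_4)$; the endpoint term gives the $\|u\|_{r,\I}^{-1}\|f\|_{\ces_{p',\ast}(\omega_\ast,v)}$ summand.

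The third step is purely interpretative: in each of the three (or two) summands, I pull the norm $\|fv\|_{p',(0,t)}=\|f\|_{p',v,(0,t)}$ out to the innermost position, identify the outer $L^{r\rw p}$, $L^{\infty}$, $L^{r\rw q}$, or $L^{q'}$ weights with the $\omega_i$ prescribed in the statement, and read off the final norm as one of $\ces_{p',s}(\omega,v)$ or $\ces_{p',s,t}(\omega,\sigma,v)$ from Definition~\ref{defi.2.2}. The identifications
\[
A_{r,p}(u)\cdot V \approx \text{derivative of } V\|u\|_{r,(0,\cdot)}^{-1},\qquad A_{q,q}^{*}(w) = \text{derivative of } \|w\|_{q,(\cdot,\i)}
\]
(and their $A_{q,r}^{*}$ / $A_{q,1}^{*}$ variants when $q\ge r$ or $q<1$) provide the dictionary between the differential-integration form produced by the gluing lemma and the normalized weights $\omega_i$ used in the statement.

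The main obstacle is bookkeeping: the four cases differ only in which of the parameters are ${}<1$ or ${}\ge 1$, but the correct choice of gluing lemma, the correct exponent of $A_{r,p}(u)$ versus $A^{*}_{r\rw p,r\rw p}(A_{r,p}(u))$, and the correct treatment of the boundary term depend on this classification; the algebraic identity $r\rw p=(q\rw p)\cdot \tfrac{r\rw p}{q\rw p}$ responsible for the mixed exponents in $\omega_4$ in cases (ii) and (iv) must be applied at exactly the right point so that the integrated expression matches $\ces_{p',\infty,r\rw q}(\omega_3,\omega_4,v)$ rather than some other mixed-norm space. Once this bookkeeping is done case by case, each verification reduces to the already-proved embedding results of \cite{gmu} combined with the gluing lemmas of Section~\ref{gluing}.
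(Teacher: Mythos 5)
Your high-level strategy is the same as the paper's: rewrite the multiplier norm as $\|\Id\|_{\cop_r(u)\rw\ces_{p,q}(w,fv)}$, invoke the embedding characterizations of \cite[Theorem 2.9]{gmu}, and decouple the two $\mathcal V$-kernels with a gluing lemma from Section~\ref{gluing}. Your case (i) is correct (the paper indeed uses Lemma~\ref{gluing.lem.3} there). However, your assignment of gluing lemmas to cases (ii)--(iv) is wrong, and this is not mere bookkeeping: each gluing lemma has a rigid structural form (outer $\esup$ versus outer integral over $x$; inner $\esup$ versus inner integral in each factor), and the expression delivered by \cite[Theorem 2.9]{gmu} only matches one of them. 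The parameter that you are not tracking is the comparison of $q$ with $r$: when $q<r$ the characterization has an \emph{outer integral} in $x$ against $d\bigl(-\|u\|_{r,(0,x)}^{-r\rw p}\bigr)$, not an outer supremum, so the supremum-type Lemmas~\ref{gluing.lem.0}, \ref{gluing.lem.2}, \ref{gluing.lem.3} simply do not apply. Concretely: in case (ii) ($1\le q<r$) the correct tool is Lemma~\ref{gluing.lem} (outer integral, one integral factor to the power $\beta-1$, one $\esup$ factor), with $\beta=\tfrac{r\rw q}{r\rw p}$ and $a=V^{r\rw p}$, not Lemma~\ref{gluing.lem.0}; in case (iii) ($r\le q<1$) both factors are integrals and the outer functional is an $\esup$, which is exactly Lemma~\ref{gluing.lem.0} with $\alpha=q'$, $\beta=r\rw p$, not Lemma~\ref{gluing.lem.2} (there is no $\esup$ factor on the $u$-side here, since $p<r$ forces an integral there); in case (iv) ($q<\min\{1,r\}$) one needs the fully integral Lemma~\ref{gluing.lem.4} with $\alpha=r\rw p$, $\beta=q'$, $\gamma=r\rw q$, again not Lemma~\ref{gluing.lem.0}.

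A second, smaller omission: in the two cases with an outer integral (ii) and (iv), after gluing the first summand still carries the density $\|w\|_{q,(x,\infty)}^{r\rw q}V(x)^{r\rw p}\,d\bigl(-\|u\|_{r,(0,x)}^{-r\rw p}\bigr)$ and must be integrated by parts to be recast as an integral against $d\bigl(-\|w\|_{q,(x,\infty)}^{r\rw q}\bigr)$ before it can be read off as $\|f\|_{\ces_{p',r\rw p,r\rw q}(\omega_1,\omega_2,v)}$. Your step three (``purely interpretative'') skips this, and without it the weights you extract will not match the stated $\omega_1$. So while the skeleton of your argument is the intended one, as written it would fail in three of the four cases at the decoupling step; you need to re-derive, for each case, which of the four functionals ($\esup$ or integral, inner and outer) actually appears in \cite[Theorem 2.9]{gmu} and select the gluing lemma whose left-hand side has that exact shape.
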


\begin{proof}
{\rm (i)} Let $\max\{1,r\} \leq q$. Applying Lemma \ref{gluing.lem.3} with
$$
\b = r \rw p,~ a(t) =  V(t), ~ g(t) = V(t)^{r \rw p} \bigg( \int_0^t u^r\bigg)^{\frac{-r}{r-p}} u^r(t) ~ \mbox{and} ~ h (t) = \|w\|_{q,(t,\infty)} \quad (t > 0),
$$
by \cite[Theorem 2.9, (i)]{gmu}, we obtain that
\begin{align*}
\|\Id\|_{\cop_r(u) \rw \ces_{p,q}(w,v)} &  \ap \sup_{x \in (0,\infty)} \bigg( \int_{(0,x)} V(t)^{r \rw p} d \bigg( - \|u\|_{r,(0,t)}^{- r \rw	p}\bigg) \bigg)^{\frac{1}{r \rw p}} \|w\|_{q,(x,\infty)}\\
& \hspace{0.5cm} + \sup_{x \in (0,\infty)} \bigg( \int_{(x,\infty)} d \bigg( - \|u\|_{r,(0,t)}^{- r \rw p}\bigg) \bigg)^{\frac{1}{r \rw p}} V(x) \|w\|_{q,(x,\infty)} \\
&  \hspace{1cm} + \|u\|_{r,\I}^{-1} \sup_{t\in \I} V(t)	\|w\|_{q,(t,\infty)} \\
\end{align*}
Consequently, 
\begin{align*}
\|f\|_{M(\cop_{r}(u),\ces_{p,q}(w,v))} & \ap \bigg\| \big\| \|f\|_{p',v,(0,\cdot)} \big\|_{r \rw p,\omega_2,(0,\cdot)} \bigg\|_{\infty, \omega_1,(0,\infty)} + \big\| \|f\|_{p',v,(0,\cdot)} \big\|_{\infty,\omega_3,(0,\infty)}\\
& \hspace{0.5cm}+ \|u\|_{r,(0,\infty)}^{-1} \big\| \|f\|_{p',v,(0,\cdot)} \big\|_{\infty, \omega_1,(0,\infty)}\\
& = \|f\|_{\ces_{p', r \rw p, \infty}(\omega_1,\omega_2,v)} + \|f\|_{\ces_{p',\infty}(\omega_3,v)} \\
& \hspace{0.3cm}+ \|u\|_{r,\I}^{-1} \|f\|_{\ces_{p',\infty}(\omega_1, v)}.
\end{align*}

{\rm (ii)} Let $1\le q < r$. Applying Lemma \ref{gluing.lem} with
$$
\b = \frac{r \rw q}{r \rw p}, ~ a(x) = V(x)^{r \rw p}, ~ g(t) = V(t)^{r \rw p} \bigg( \int_0^t u^r\bigg)^{\frac{-r}{r-p}} u^r(t), ~ \mbox{and} ~ h(t) = \|w\|_{q,(t,\infty)}^{r \rw p},
$$
noting that $\b - 1 = \frac{r \rw q}{q \rw p}$, by \cite[Theorem 2.9, (ii)]{gmu}, we obtain that
\begin{align*}
\|\Id\|_{\cop_r(u) \rw \ces_{p,q}(w,v)} & \\
& \hspace{-2cm} \ap \bigg( \int_{(0,\infty)} \bigg( \int_{(0,x)} V(t)^{r \rw p} d\bigg( - \|u\|_{r,(0,t)}^{- r \rw p} \bigg) \bigg)^{\frac{r \rw q}{q \rw p}} \|w\|_{q,(x,\infty)}^{r \rw q} V(x)^{r \rw p} d\bigg( - \|u\|_{r,(0,x)}^{- r \rw p} \bigg) \bigg)^{\frac{1}{r \rw q}} \\
& \hspace{-1.5cm} + \bigg( \int_{(0,\infty)} \bigg( \sup_{t \in (0,x)} V(t)^{r \rw p} \|w\|_{q,(t,\infty)}^{r \rw p} \bigg)^{\frac{r \rw q}{r \rw p}} \bigg( \int_{(x,\infty)}  d\bigg( - \|u\|_{r,(0,t)}^{- r \rw p} \bigg) \bigg)^{\frac{r \rw q}{q \rw p}} \,d\bigg( - \|u\|_{r,(0,x)}^{- r \rw p} \bigg) \bigg)^{\frac{1}{r \rw q}} \\
&  \hspace{-1.5cm} + \|u\|_{r,\I}^{-1} \sup_{t\in \I} V(t) \|w\|_{q,(t,\infty)}. 
\end{align*}
Integrating by parts in the first integral on the right hand side, we arrive at
\begin{align*}
\|\Id\|_{\cop_r(u) \rw \ces_{p,q}(w,v)} & \\
& \hspace{-2cm} \ap \bigg( \int_{(0,\infty)} \bigg( \int_{(0,x)} V(t)^{r \rw p} d \bigg( - \|u\|_{r,(0,t)}^{- r \rw p} \bigg) \bigg)^{\frac{r \rw q}{r \rw p}} d \bigg(- \|w\|_{q,(x,\infty)}^{r \rw q} \bigg) \bigg)^{\frac{1}{r \rw q}} \\
& \hspace{-1.5cm} + \bigg( \int_{(0,\infty)} \bigg( \sup_{t \in (0,x)} V(t)^{r \rw p} \|w\|_{q,(t,\infty)}^{r \rw p} \bigg)^{\frac{r \rw q}{r \rw p}} \bigg( \int_{(x,\infty)}  d\bigg( - \|u\|_{r,(0,t)}^{- r \rw p} \bigg) \bigg)^{\frac{r \rw q}{q \rw p}} \,d\bigg( - \|u\|_{r,(0,x)}^{- r \rw p} \bigg) \bigg)^{\frac{1}{r \rw q}} \\
&  \hspace{-1.5cm} + \|u\|_{r,\I}^{-1} \sup_{t\in \I} V(t) \|w\|_{q,(t,\infty)}.
\end{align*}   
Thus
\begin{align*}
\|f\|_{M(\cop_{r}(u),\ces_{p,q}(w,v))} \ap & \, \bigg\| \big\| \|f\|_{p',v,(0,\cdot)}\big\|_{r \rw p, \omega_2,(0,\cdot)} \bigg\|_{r \rw q, \omega_1, (0,\infty)}  + \bigg\| \big\| \|f\|_{p',v,(0,\cdot)} \big\|_{\infty,\omega_4,(0,\cdot)} \bigg\|_{r \rw q,\omega_3, (0,\infty)} \\
& + \|u\|_{r,\I}^{-1}  \bigg\| \|f\|_{p',v,(0,\cdot)} \bigg\|_{\infty,\omega_4,(0,\infty)} \\
= & \, \|f\|_{\ces_{p',r \rw p, r \rw q}(\omega_1,\omega_2,v)} + \|f\|_{\ces_{p',\infty, r \rw q}(\omega_3,\omega_4,v)} + \|u\|_{r,\I}^{-1} \|f\|_{\ces_{p',\infty}(\omega_4, v)}.
\end{align*}
     
{\rm (iii)} Let  $r \le q < 1$. Applying Lemma \ref{gluing.lem.0} with
$$
\a = q',~ \b = r \rw p, ~ a(x) = V(x), ~ g(t) = V(t)^{r \rw p} \bigg( \int_0^t u^r\bigg)^{\frac{-r}{r-p}} u^r(t), ~ \mbox{and} ~ h(t) = \bigg( \int_t^\i w^q \bigg)^{q'} w^q(t),
$$
by \cite[Theorem 2.9, (iii)]{gmu}, we obtain that
\begin{align*}
\|\Id\|_{\cop_r(u) \rw \ces_{p,q}(w,v)} \ap & \sup_{x \in (0,\infty)} \bigg( \int_{(0,x)} V(t)^{r \rw p} d \bigg( - \|u\|_{r,(0,t)}^{- r \rw p}\bigg)\bigg)^{\frac{1}{r \rw p}} \bigg(\int_{(x,\infty)} d \bigg( - \|w\|_{q, (t,\infty)}^{r \rw q}\bigg)\bigg)^{\frac{1}{q'}}  \\
& + \sup_{x \in (0,\infty)} \bigg( \int_{(x,\infty)} d\bigg( - \|u\|_{r,(0,t)}^{- r \rw p}\bigg) \bigg)^{\frac{1}{r \rw p}}  \bigg( \int_{(0,x)} V(t)^{q'} d\bigg( - \|w\|_{q, (t,\infty)}^{q'}\bigg) \bigg)^{\frac{1}{q'}} \\
 & \hspace{0.5cm} + \|u\|_{r,\I}^{-1} \bigg( \int_{(0,\infty)} V(t)^{q'} d\bigg( - \|w\|_{q, (t,\infty)}^{q'}\bigg) \bigg)^{\frac{1}{q'}}.
\end{align*}
Consequently
\begin{align*}
\|f\|_{M(\cop_{r}(u),\ces_{p,q}(w,v))}  \ap &  \bigg\| \big\| \|f\|_{p', v, (0,\cdot)} \big\|_{r \rw p, \omega_2, (0,\cdot)} \bigg\|_{\infty,\omega_1,(0,\infty)}  + \bigg\| \big\| \|f\|_{p', v, (0,\cdot)} \big\|_{q', \omega_4,(0,\cdot)} \bigg\|_{\infty,\omega_3,(0,\infty)} \\
& + \|u\|_{r,\I}^{-1} \big\| \|f\|_{p',v,(0,\cdot)} \big\|_{q',\omega_4,(0,\infty)} \\
= & \|f\|_{\ces_{p',r \rw p, \infty}(\omega_1,\omega_2,v)} + \|f\|_{\ces_{p',q', \infty}(\omega_3,\omega_4,v)}  +  \|u\|_{r,\I}^{-1} \|f\|_{\ces_{p',q'}(\omega_4,v)}.
\end{align*}

{\rm (iv)} Let $q < \min\{1,r\}$. Applying Lemma \ref{gluing.lem.4} with 
$$
\a = r \rw p, ~ \b = q',  ~ \ga = r \rw q,  
$$
and
$$
a(x) = V(x), ~ g(t) = V(t)^{r \rw p} \bigg( \int_0^t u^r\bigg)^{\frac{-r}{r-p}} u^r(t), ~h(t) = \bigg( \int_t^\i w^q \bigg)^{q'} w^q(t)
$$
by \cite[Theorem 2.9, (iv)]{gmu}, we obtain that
\begin{align*}
\|\Id\|_{\cop_r(u) \rw \ces_{p,q}(w,v)} & \\
& \hspace{-2.5cm} \ap \bigg(\int_{(0,\infty)} \bigg( \int_{(0,x)} V(t)^{r \rw p} d\bigg( - \|u\|_{r,(0,t)}^{- r \rw p}\bigg)\bigg)^{\frac{r \rw q}{r \rw p} - 1}  \|w\|_{q, (x,\infty)}^{r \rw q} V(x)^{r \rw p} d\bigg( - \|u\|_{r,(0,x)}^{-r \rw	p}\bigg)\bigg)^{\frac{1}{r \rw q}}\\
& \hspace{-2.cm} + \bigg(  \int_{(0,\infty)} \bigg( \int_{(x,\infty)} d \bigg( - \|u\|_{r,(0,t)}^{- r \rw p}\bigg)\bigg)^{\frac{r \rw q}{r \rw p} - 1}  \bigg( \int_{(0,x)}  V(t)^{q'} d \bigg( - \|w\|_{q, (t,\infty)}^{q'}\bigg)\bigg)^{\frac{r \rw q}{q'}}\,d \bigg( - \|u\|_{r,(0,x)}^{-r \rw p}\bigg) \bigg)^{\frac{1}{r \rw q}}\\
& \hspace{-2cm} +\|u\|_{r,\I}^{-1} \bigg( \int_{(0,\infty)} V(t)^{q'} d \bigg( - \|w\|_{q, (t,\infty)}^{q'}\bigg) \bigg)^{\frac{1}{q'}}.
\end{align*}
Integrating by parts at the first integral on the right hand side, we get that
\begin{align*}
\|\Id\|_{\cop_r(u) \rw \ces_{p,q}(w,v)} & \\
& \hspace{-2.5cm} \ap \bigg(\int_{(0,\infty)} \bigg( \int_{(0,x)} V(t)^{r \rw p} d\bigg( - \|u\|_{r,(0,t)}^{- r \rw p}\bigg) \bigg)^{\frac{r \rw q}{r \rw p}} d \bigg( - \|w\|_{q, (x,\infty)}^{r \rw q} \bigg) \bigg)^{\frac{1}{r \rw q}}\\
& \hspace{-2cm} + \bigg(  \int_{(0,\infty)} \bigg( \int_{(x,\infty)} d \bigg( - \|u\|_{r,(0,t)}^{- r \rw p}\bigg)\bigg)^{\frac{r \rw q}{q \rw p}}  \bigg( \int_{(0,x)}  V(t)^{q'} d \bigg( - \|w\|_{q, (t,\infty)}^{q'}\bigg)\bigg)^{\frac{r \rw q}{q'}}\,d \bigg( - \|u\|_{r,(0,x)}^{-r \rw p}\bigg) \bigg)^{\frac{1}{r \rw q}}\\
& \hspace{-2cm} +\|u\|_{r,\I}^{-1} \bigg( \int_{(0,\infty)} V(t)^{q'} d \bigg( - \|w\|_{q, (t,\infty)}^{q'}\bigg) \bigg)^{\frac{1}{q'}}.
\end{align*}
Consequently
\begin{align*}
\|f\|_{M(\cop_{r}(u),\ces_{p,q}(w,v))} \ap & \bigg\| \big\| \|f\|_{p', v, (0,\cdot)} \big\|_{r \rw p, \omega_2, (0,\cdot)} \bigg\|_{r \rw q, \omega_1,(0,\infty)}  + \bigg\| \big\| \|f\|_{p', v, (0,\cdot)} \big\|_{q', \omega_4, (0,\cdot)} \bigg\|_{\infty,\omega_3,(0,\infty)} \\
& + \|u\|_{r,\I}^{-1} \big\| \|f\|_{p',v,(0,\cdot)} \big\|_{q', \omega_4, (0,\infty)} \\
= & \|f\|_{\ces_{p', r \rw p, r \rw q}(\omega_1, \omega_2, v)} + \|f\|_{\ces_{p', q', r\rw q}(\omega_3, \omega_4, v)}  +  \|u\|_{r,\I}^{-1} \|f\|_{\ces_{p', q'}(\omega_4, v)}.
\end{align*}
\end{proof}

\begin{thm}\label{maintheorem1.3}
Let $0 < r < p = 1 < q < \infty$. Assume that $v \in \W\I$, $u \in \dual{\O_{r}}$ and $w \in \O_{q}$. Then
$$
M \big (\cop_{r}(u), \ces_{p,q}(w,v) \big) = L_{\infty}(\omega),
$$
where
$$
\omega : = A_{r,r}(u) \cdot A_{q,q}^* (w) \cdot v.
$$
Moreover
\begin{align*}
\|f\|_{M(\cop_{r}(u), \ces_{p,q}(w,v))} & \ap \|f\|_{\infty,\omega,(0,\infty)}.
\end{align*}
\end{thm}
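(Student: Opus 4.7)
The plan is to follow the same recipe as in the preceding theorems of this section: rewrite the multiplier norm as an embedding constant,
$$
\|f\|_{M(\cop_r(u),\,\ces_{1,q}(w,v))} \;=\; \|\Id\|_{\cop_r(u)\,\rw\,\ces_{1,q}(w,\,fv)},
$$
then invoke the appropriate embedding characterization from \cite{gmu} for the parameter range $0<r<1=p<q<\infty$, and finally collapse the resulting expression using the gluing machinery of Section \ref{gluing}. The saving feature here is that $p=1$ forces $p'=\infty$, so every $L_{p'}$ norm in the embedding estimates degenerates into an essential supremum, which is exactly what the Fubini-type identities \eqref{Fubini.1}, \eqref{Fubini.2} were built for.

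Concretely, the $p=1$ analogue of the result used in the proof of Theorem \ref{maintheorem1.1}(i) gives, after substituting $v\mapsto fv$,
$$
\|\Id\|_{\cop_r(u)\rw\ces_{1,q}(w,fv)} \;\approx\; \esup_{x\in\I}\!\bigg(\esup_{t\in\I}{\mathcal V}_f(x,t)V_f(t)\|u\|_{r,(0,t)}^{-1}\bigg)\!\bigg(\esup_{t\in\I}{\mathcal V}_f(t,x)\|w\|_{q,(t,\i)}\bigg),
$$
where $V_f(x):=\|fv\|_{\infty,(0,x)}$. Applying Lemma \ref{gluing.lem.1} with $a=V_f$, $g(t)=V_f(t)\|u\|_{r,(0,t)}^{-1}$, and $h(t)=\|w\|_{q,(t,\i)}$ — exactly mirroring the reduction performed in Theorem \ref{maintheorem1.1}(i) — collapses this double expression into the single-layer quantity
$$
\esup_{x\in\I} V_f(x)\,\|u\|_{r,(0,x)}^{-1}\,\|w\|_{q,(x,\i)} \;=\; \esup_{x\in\I}\bigg(\esup_{s\in(0,x)} f(s)v(s)\bigg)\|u\|_{r,(0,x)}^{-1}\|w\|_{q,(x,\i)}.
$$

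To finish, I would observe that the outer weight $F(x):=\|u\|_{r,(0,x)}^{-1}\|w\|_{q,(x,\i)}$ is a product of two non-increasing functions and hence itself non-increasing. Therefore identity \eqref{Fubini.1}, applied with this $F$ and $G=fv$, allows one to collapse the remaining nested essential supremum:
$$
\esup_{x\in\I} F(x)\esup_{s\in(0,x)}(fv)(s) \;=\; \esup_{x\in\I} F(x)\,f(x)v(x) \;=\; \|f\|_{\infty,\omega,\I},
$$
where $\omega = A_{r,r}(u)\cdot A_{q,q}^*(w)\cdot v$ by definition. This yields both inequalities and completes the characterization.

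The main obstacle is narrow and technical: one must verify that the $p=1$ version of \cite[Theorem 2.8(i)]{gmu} is indeed available and yields the stated two-supremum form in this borderline case (the inner $L_{p,v}$-norm on $(0,\cdot)$ becomes $\esup$ rather than an integral). Everything after that first invocation is routine, because Lemma \ref{gluing.lem.1} and formula \eqref{Fubini.1} were designed precisely to handle the combinations of suprema that appear. No new regularity hypothesis on $V$ is needed — which is consistent with the absence of any $\vp\in Q_{V^{1/p'}}$ assumption in the statement, in contrast to Theorems \ref{maintheorem1.1} and \ref{maintheorem1.2}.
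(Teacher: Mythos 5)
Your overall strategy (multiplier norm $=$ embedding constant, cite the relevant embedding theorem from \cite{gmu} with $v\mapsto fv$, collapse via the Fubini-type identities) is the same as the paper's, and your final collapsing step
$\esup_{x}F(x)\esup_{s\in(0,x)}(fv)(s)=\esup_{x}F(x)f(x)v(x)$ with $F(x)=\|u\|_{r,(0,x)}^{-1}\|w\|_{q,(x,\infty)}$ non-increasing is exactly how the paper finishes. The gap is precisely the step you flag as ``narrow and technical'': you start from a hypothetical $p=1$ analogue of \cite[Theorem 2.8(i)]{gmu} in the two-supremum form with ${\mathcal V}_f$ and $V_f$, and then spend Lemma \ref{gluing.lem.1} undoing it. That analogue is not available: Theorem 2.8 is stated for $p<1$ under the non-degeneracy hypothesis $\vp_1\in Q_{V^{1/p'}}$, and at $p=1$ (so $p'=\infty$, $V^{1/p'}\equiv \mathbf 1$) the class $Q_{\mathbf 1}$ is empty --- the defining limits $\lim_{t\to\infty}1/\vp(t)=0$ and $\lim_{t\to\infty}\vp(t)/U(t)=\lim_{t\to\infty}\vp(t)=0$ are incompatible --- so the hypothesis cannot even be formulated in this borderline case. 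This is why \cite{gmu} treats $0<r<p=1<q$ in a separate statement.

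The paper instead invokes \cite[Theorem 2.10]{gmu}, which for this exact parameter range directly yields the one-layer characterization
\begin{align*}
\|\Id\|_{\cop_{r}(u)\rw\ces_{1,q}(w,fv)}\ap \sup_{t\in\I}\big\|\,\|u\|_{r,(0,\cdot)}^{-1}\big\|_{\infty,fv,(0,t)}\,\|w\|_{q,(t,\infty)},
\end{align*}
after which a single application of \eqref{Fubini.1} gives $\|f\|_{\infty,\omega,\I}$; no gluing lemma is needed. To repair your argument, replace the first invocation by this citation (or prove the two-supremum formula for $p=1$ from scratch, which is the real content you are currently assuming). Everything downstream of that point in your write-up is correct.
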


\begin{proof}
By \cite[Theorem 2.10]{gmu}, on using \eqref{Fubini.1}, we obtain that
\begin{align*}
\|f\|_{M(\cop_{r}(u), \ces_{p,q}(w,v))} & \ap \sup_{t \in \I} \big\| \|u\|_{r,(0,\cdot)}^{-1}\big\|_{\i,f v,(0,t)} \|w\|_{q,(t,\i)} \\
& = \sup_{t \in \I}  f(t) v(t) \|u\|_{r,(0,t)}^{-1} \|w\|_{q,(t,\i)} \\
& = \|f\|_{\infty,\omega,(0,\infty)}.
\end{align*}
\end{proof}

\begin{thm}\label{maintheorem1.4}
Let $0<r, q < \i$, and $1= p < \min\{r,q\}$. Let $u \in \dual{\O_{r}}$ and $w\in \O_{q}$. Assume that $v \in \W\I	\cap C\I$ and $0 < \|w^{-1}\|_{q',(x,\infty)} < \infty,\, x > 0$.
	
{\rm (i)} If $r\le q$, then
\begin{align*}
M(\cop_{r}(u), \ces_{p,q}(w,v)) & \\
& \hspace{-2cm} = \left\{
\begin{array}{cc}
\ces_{\infty,r', \infty}(\omega_1,\omega_2,v) \bigcap L_{\infty}(\omega_3) \bigcap L_{\infty}(\omega_4) & ~ \mbox{if}  ~ \|u\|_{r,(0,\infty)} < \infty, \\
\ces_{\infty,r', \infty}(\omega_1,\omega_2,v) \bigcap L_{\infty}(\omega_3)  & ~ \mbox{if}  ~ \|u\|_{r,(0,\infty)} =\infty,
\end{array}
\right.
\end{align*}
where
$$
\omega_1:= A_{q,q}^*(w), \quad \omega_2 := A_{r,1}(u), \quad \omega_3(x) := v(x)\big\| A_{r',r'}^*(A_{r,1(u)})(t) \cdot \omega_1(t)\big\|_{\infty,(x,\infty)}, \quad \omega_4 := A_{q,q}^*(w)\cdot v.
$$
Moreover
\begin{align*}
\|f\|_{M(\cop_{r}(u), \ces_{p,q}(w,v))} \ap & \|f\|_{\ces_{\infty,r', \infty}(\omega_1,\omega_2,v)} + \|f\|_{\infty,\omega_3,(0,\infty)} +  \|u\|_{r,\I}^{-1} \|f\|_{\infty,\omega_4,(0,\infty)}.
\end{align*}
	
{\rm (ii)} If $q < r$, then
\begin{align*}
M(\cop_{r}(u), \ces_{p,q}(w,v)) & \\
& \hspace{-6cm} = \left\{
\begin{array}{cc}
\ces_{\infty,r', r \rw q}(\omega_1,\omega_2,v) \bigcap \ces_{\infty,\infty, r\rw q}(\omega_3,\omega_4,v) \bigcap L_{\infty}(\omega_5) & ~ \mbox{if}  ~ \|u\|_{r,(0,\infty)} < \infty, \\
\ces_{\infty,r', r \rw q}(\omega_1,\omega_2,v) \bigcap \ces_{\infty,\infty, r\rw q}(\omega_3,\omega_4,v)  & ~ \mbox{if}  ~ \|u\|_{r,(0,\infty)} =\infty,
\end{array}
\right.
\end{align*}
where
\begin{align*}
\omega_1:= A_{q,r}^*(w), ~~ &\omega_2 := A_{r,1}(u), ~~ \omega_3 := [A_{r',r'}^*(A_{r,1}(u))]^{\frac{r'}{q'}}\cdot[A_{r,1}(u)]^{\frac{r'}{r \rw q}},\\ &\omega_4 := A_{q,q}^*(w), ~~ \omega_5:= A_{q,q}^*(w) \cdot v. 
\end{align*}
Moreover
\begin{align*}
\|f\|_{M(\cop_{r}(u), \ces_{p,q}(w,v))} \ap & \|f\|_{\ces_{\infty,r', r \rw q}(\omega_1,\omega_2,v)} + \|f\|_{\ces_{\infty,\infty, r\rw q}(\omega_3,\omega_4,v)} + \|u\|_{r,\I}^{-1} \|f\|_{\infty,\omega_5,(0,\infty)}.
\end{align*}
\end{thm}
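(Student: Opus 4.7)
The plan is to follow the template developed throughout Section \ref{multipliers}: reduce the multiplier norm to an embedding norm via
$$
\|f\|_{M(\cop_{r}(u), \ces_{1,q}(w,v))} = \|\Id\|_{\cop_{r}(u) \rw \ces_{1,q}(w, fv)},
$$
invoke the corresponding embedding characterization from \cite{gmu} (the analogue of \cite[Theorem 2.9]{gmu} for the endpoint case $1 = p < \min\{r, q\}$), then apply one of the gluing lemmas from Section \ref{gluing} to simplify the resulting mixed integral/supremum expression, and finally re-read it as a sum of quasinorms in the target spaces. The hypothesis $v \in C\I$ and the condition $0 < \|w^{-1}\|_{q',(x,\infty)} < \infty$ for every $x > 0$ ensure that the quasiconcave functions governing the embedding theorem lie in the appropriate $Q_{V^{1/p'}}$ class with $V(x) = \esup_{(0,x)} v$; the continuity of $v$ matters because $p' = \infty$ forces the inner Lebesgue norm to collapse to an essential supremum.

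For part (i), $r \le q$, the embedding from \cite{gmu} yields an expression in which the outermost operation is an essential supremum over $x$ of a product involving the factor $\esup_{t} \mathcal{V}(x,t) V(t) \|u\|_{r,(0,t)}^{-1}$ and a suitable Copson/Ces\`{a}ro-type factor in $w$, with $V$ replaced by $\|fv\|_{\infty,(0,\cdot)}$. Since $r \rw p = r'$ in this range, I would apply Lemma \ref{gluing.lem.3} with $\beta = r'$, $a(t) = V(t)$, $g$ equal to the Copson-$u$ measure and $h$ to the $\|w\|_{q,(t,\infty)}$ factor. This splits the expression into two separated outer suprema: the first, on factoring $\omega_1 = A_{q,q}^*(w)$ and $\omega_2 = A_{r,1}(u)$, is exactly $\|f\|_{\ces_{\infty,r',\infty}(\omega_1,\omega_2,v)}$; the second, carrying an extra $a(x) = V(x)$ in front of the inner $\esup_{(0,x)} fv$, collapses via $V(x) \esup_{(0,x)} fv \approx v(x) \esup_{(0,x)} fv$ (using continuity of $v$) to $\|f\|_{\infty,\omega_3,(0,\infty)}$, with $\omega_3$ receiving the remaining $\esup_{t \in (x,\infty)}$ factor. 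The last, "boundary" summand $\|u\|_{r,\I}^{-1} \esup_x V(x)\|w\|_{q,(x,\infty)}$ that survives in the embedding theorem precisely when $\|u\|_{r,\I} < \infty$ produces the additional $L_\infty(\omega_4)$ component, in complete analogy with Theorem \ref{maintheorem1.3}.

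For part (ii), $q < r$, the outer structure from the embedding theorem is an $L_{r \rw q}$-integral rather than a pure supremum, so the appropriate tool is Lemma \ref{gluing.lem}, applied with the natural parameter matching (again $a(t) = V(t)$, with $\beta$ chosen so that $\beta - 1 = r \rw q / (q \rw p)$ as in the proof of Theorem \ref{maintheorem1.2}\,(ii)), and with $g,h$ chosen so that, after an integration by parts identical to the one used in that earlier proof, one recovers two separated integral expressions. The first reads off as $\|f\|_{\ces_{\infty,r',r \rw q}(\omega_1,\omega_2,v)}$; the second, after dispensing with the $a(x) = V(x)$ factor by continuity of $v$ and consolidating the $d(-\|u\|_{r,(0,x)}^{-r'})$ measure, produces $\|f\|_{\ces_{\infty,\infty,r\rw q}(\omega_3,\omega_4,v)}$ — the delicate composite $\omega_3 = [A_{r',r'}^*(A_{r,1}(u))]^{r'/q'} \cdot [A_{r,1}(u)]^{r'/(r \rw q)}$ emerges exactly from this recombination of the $a$-factor with the outer Copson measure and the inner $(\int_0^x a^{\beta} h)^{1/\beta}$ term. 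The $L_\infty(\omega_5)$ summand in the case $\|u\|_{r,\I} < \infty$ is once more the boundary term of the embedding characterization.

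The main obstacle is bookkeeping: matching the parameters in the gluing lemmas so that the separated right-hand sides produce \emph{precisely} the claimed weights, especially the composite $\omega_3$ in both parts (built from $A^*_{r',r'}$ applied to the already composite $A_{r,1}(u)$) and the raising to the fractional power $r'/(r \rw q)$ in part (ii). A careful tracking of the exponents $r$, $r'$, $r \rw p$, $r \rw q$, $q'$ and their H\"{o}lder-conjugate interplay at $p=1$ is essential. A secondary technical point is verifying, at the endpoint $p' = \infty$, that the admissibility/non-degeneracy conditions of Remark \ref{limsupcondition} — rather than those of Remark \ref{nondegrem} — are indeed the ones that make the \cite{gmu} embedding theorem applicable in this regime, which is exactly where the hypotheses on $v$ and on $\|w^{-1}\|_{q',(x,\infty)}$ intervene.
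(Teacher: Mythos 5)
Your proposal follows essentially the same route as the paper: reduce the multiplier norm to $\|\Id\|_{\cop_r(u)\rw\ces_{1,q}(w,fv)}$, invoke \cite[Theorem 2.11]{gmu}, apply Lemma \ref{gluing.lem.3} in part (i) and Lemma \ref{gluing.lem} (with an integration by parts) in part (ii), and read off the resulting three terms as the claimed quasi-norms, with the boundary term supplying the extra $L_\infty$ component when $\|u\|_{r,\I}<\infty$. The only corrections needed are bookkeeping ones you already anticipate: in part (ii) the gluing lemma must be applied with $a(t)=V(t)^{r'}$ (not $V(t)$), $\b=\frac{r\rw q}{r'}$, $g(t)=V(t)^{r'}\,d\big(-\|u\|_{r,(0,t)}^{-r'}\big)$ and $h(t)=\|w\|_{q,(t,\infty)}^{r'}$, i.e.\ exactly the $p=1$ specialization of the parameters used for Theorem \ref{maintheorem1.2}\,(ii), and the final collapse of $\|fv\|_{\infty,(0,\cdot)}$ to $f(x)v(x)$ is effected by \eqref{Fubini.1}--\eqref{Fubini.2} rather than by the continuity of $v$ alone.
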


\begin{proof}
{\rm (i)} Let $r \le q$. Applying Lemma \ref{gluing.lem.3} with
$$
\b = r',~ a(t) =  V(t), ~ g(t) = V(t)^{r'} d \bigg( - \|u\|_{r,(0,t)}^{- r'}\bigg), ~ \mbox{and} ~ h (t) = \|w\|_{q,(t,\infty)} \quad (t > 0),
$$
by \cite[Theorem 2.11, (i)]{gmu}, we obtain that 
\begin{align*}
\|\Id\|_{\cop_r(u) \rw \ces_{p,q}(w,v)}  \ap & \, \sup_{x \in (0,\infty)} \bigg( \int_{(0,x)} V(t)^{r'} d \bigg( - \|u\|_{r,(0,t)}^{- r'}\bigg) \bigg)^{\frac{1}{r'}} \|w\|_{q,(x,\infty)} \\
& + \sup_{x \in (0,\infty)}  V(x) \bigg( \int_{(x,\infty)} d \bigg( - \|u\|_{r,(0,t)}^{- r'}\bigg) \bigg)^{\frac{1}{r'}} \|w\|_{q,(x,\infty)} \\
& + \|u\|_{r,\I}^{-1} \sup_{x\in \I} V(x)	\|w\|_{q,(x,\infty)}.
\end{align*}
Consequently, in view of \eqref{Fubini.1} and \eqref{Fubini.2}, we arrive at 
\begin{align*}
\|f\|_{M(\cop_{r}(u), \ces_{p,q}(w,v))} &\ap   \bigg\| \big\| \|f\|_{\infty, v, (0,\cdot)} \big\|_{r', \omega_2,(0,\cdot)} \bigg\|_{\infty,\omega_1,(0,\infty)} +  \|f\|_{\infty,\omega_3,(0,\infty)}  + \|u\|_{r,\I}^{-1} \| f \|_{\infty,\omega_4,(0,\infty)} \\
&=  \|f\|_{\ces_{\infty,r', \infty}(\omega_1,\omega_2,v)} +  \|f\|_{\infty,\omega_3,(0,\infty)}  + \|u\|_{r,\I}^{-1} \| f \|_{\infty,\omega_4,(0,\infty)}.
\end{align*}
	
{\rm (ii)} Let $q < r$. Applying Lemma \ref{gluing.lem} with
$$
\b =\frac{r \rw q}{ r'},~ a(t) =  V(t)^{r'}, ~ g(t) = V(t)^{r'} d \bigg( - \|u\|_{r,(0,t)}^{-r'}\bigg), ~ \mbox{and} ~ h (t) = \|w\|_{q,(t,\infty)}^{r'} \quad (t > 0),
$$
by \cite[Theorem 2.11, (ii)]{gmu}, we obtain that
\begin{align*}
\|\Id\|_{\cop_r(u) \rw \ces_{p,q}(w,v)}  & \\
& \hspace{-1.5cm} \ap \bigg(\int_{(0,\infty)} \bigg( \int_{(0,x)} V(t)^{r'} d \bigg( - \|u\|_{r,(0,t)}^{- r'}\bigg)\bigg)^{\frac{r \rw q}{q'}} \|w\|_{q,(x,\infty)}^{r \rw q} V(x)^{r'} d \bigg( - \|u\|_{r,(0,x)}^{- r'}\bigg)\bigg)^{\frac{1}{r \rw q}}\\
& \hspace{-1cm} + \bigg( \int_{(0,\infty)} \bigg( \sup_{t \in (0,x)} V(t)^{r'} \|w\|_{q,(t,\infty)}^{r'}\bigg)^{\frac{r \rw q}{r'}} \bigg( \int_{(x,\infty)} d \bigg( - \|u\|_{r,(0,t)}^{- r'}\bigg)\bigg)^{\frac{r \rw q}{q'}}  \, d \bigg( - \|u\|_{r,(0,x)}^{- r'}\bigg) \bigg)^{\frac{1}{r \rw q}}\\
& \hspace{-1cm} + \|u\|_{r,\I}^{-1} \sup_{t \in \I} V(t) \|w\|_{q,(t,\infty)}.
\end{align*}
Integrating by parts at the first integral on the right hand side, we get that
\begin{align*}
\|\Id\|_{\cop_r(u) \rw \ces_{p,q}(w,v)}  & \\
& \hspace{-1.5cm} \ap \bigg(\int_{(0,\infty)} \bigg( \int_{(0,x)} V(t)^{r'} d\bigg( - \|u\|_{r,(0,t)}^{- r'}\bigg)\bigg)^{\frac{r \rw q}{r'}}  d \bigg( - \|w\|_{q, (x,\infty)}^{r \rw q} \bigg) \bigg)^{\frac{1}{r \rw q}}	\\
& \hspace{-1cm} + \bigg( \int_{(0,\infty)} \bigg( \sup_{t \in (0,x)} V(t)^{r'} \|w\|_{q,(t,\infty)}^{r'}\bigg)^{\frac{r \rw q}{r'}} \bigg( \int_{(x,\infty)} d \bigg( - \|u\|_{r,(0,t)}^{- r'}\bigg)\bigg)^{\frac{r \rw q}{q'}}  \, d \bigg( - \|u\|_{r,(0,x)}^{- r'}\bigg) \bigg)^{\frac{1}{r \rw q}}\\
& \hspace{-1cm} + \|u\|_{r,\I}^{-1} \sup_{t \in \I} V(t) \|w\|_{q,(t,\infty)}.
\end{align*}
Therefore, in view of \eqref{Fubini.1}, we arrive at 
\begin{align*}
\|f\|_{M(\cop_{r}(u), \ces_{p,q}(w,v))} &\ap \bigg\| \big\| \|f\|_{\infty, v, (0,\cdot)} \big\|_{r', \omega_2,(0,\cdot)} \bigg\|_{r \rw q,\omega_1,(0,\infty)}  + \bigg\| \big\| \|f\|_{\infty, v, (0,\cdot)} \big\|_{\infty, \omega_4,(0,\cdot)} \bigg\|_{r\rw q,\omega_3,(0,\infty)}\\
&\hspace{1cm} + \|u\|_{r,\I}^{-1} \|f\|_{\infty,\omega_5} \\
& =  \|f\|_{\ces_{\infty,r', r \rw q}(\omega_1,\omega_2,v)} + \|f\|_{\ces_{\infty,\infty, r\rw q}(\omega_3,\omega_4,v)} + \|u\|_{r,\I}^{-1} \|f\|_{\infty,\omega_5,(0,\infty)}.
\end{align*}
\end{proof}

\begin{bibdiv}
      \begin{biblist}

\bib{askeyboas}{article}{
    author={Askey, R.},
    author={Boas, R.P., Jr.},
    title={Some integrability theorems for power series with positive
        coefficients},
    conference={
        title={Mathematical Essays Dedicated to A. J. Macintyre},
    },
    book={
        publisher={Ohio Univ. Press, Athens, Ohio},
    },
    date={1970},
    pages={23--32},
    review={\MR{0277956 (43 \#3689)}},
}



\bib{astasmal2009}{article}{
    author={Astashkin, S.V.},
    author={Maligranda, L.},
    title={Structure of Ces\`aro function spaces},
    journal={Indag. Math. (N.S.)},
    volume={20},
    date={2009},
    number={3},
    pages={329--379},
    issn={0019-3577},
    review={\MR{2639977 (2011c:46056)}},
    doi={10.1016/S0019-3577(10)00002-9},
}

\bib{asmalsurvey}{article}{
    author={Astashkin, S.V.},
    author={Maligranda, L.},
    title={Structure of Ces\`{a}ro function spaces: a survey},
    journal={Banach Center Publ.},
    volume={102},
    date={2014},
    pages={13--40},
}


\bib{bennett1996}{article}{
    author={Bennett, G.},
    title={Factorizing the classical inequalities},
    journal={Mem. Amer. Math. Soc.},
    volume={120},
    date={1996},
    number={576},
    pages={viii+130},
    issn={0065-9266},
    review={\MR{1317938 (96h:26020)}},
    doi={10.1090/memo/0576},
}

\bib{boas1967}{book}{
    author={Boas, R.P., Jr.},
    title={Integrability theorems for trigonometric transforms},
    series={Ergebnisse der Mathematik und ihrer Grenzgebiete, Band 38},
    publisher={Springer-Verlag New York Inc., New York},
    date={1967},
    pages={v+66},
    review={\MR{0219973 (36 \#3043)}},
}

\bib{boas1970}{article}{
    author={Boas, R.P., Jr.},
    title={Some integral inequalities related to Hardy's inequality},
    journal={J. Analyse Math.},
    volume={23},
    date={1970},
    pages={53--63},
    issn={0021-7670},
    review={\MR{0274685 (43 \#447)}},
}

\bib{gkp_2009}{article}{
	author={Gogatishvili, A.},
	author={Kufner, A.},
	author={Persson, L.-E.},
	title={Some new scales of weight characterizations of the class $B_p$},
	journal={Acta Math. Hungar.},
	volume={123},
	date={2009},
	number={4},
	pages={365--377},
	issn={0236-5294},
	review={\MR{2506756}},
}

\bib{gmu}{article}{
	author={Gogatishvili, A.},
	author={Mustafayev, R.Ch.},
	author={\"Unver, T.},
	title={Embeddings between weighted Copson and Ces\`aro function spaces},
	journal={Czechoslovak Math. J.},
	volume={67(142)},
	date={2017},
	number={4},
	pages={1105--1132},
	issn={0011-4642},
	review={\MR{3736022}},
}

\bib{gogperstepwall}{article}{
	author={Gogatishvili, A.},
	author={Persson, L.-E.},
	author={Stepanov, V. D.},
	author={Wall, P.},
	title={Some scales of equivalent conditions to characterize the Stieltjes
		inequality: the case $q < p$},
	journal={Math. Nachr.},
	volume={287},
	date={2014},
	number={2-3},
	pages={242--253},
	issn={0025-584X},
	review={\MR{3163577}},
	doi={10.1002/mana.201200118},
}

\bib{gp_2003}{article}{
	author={Gogatishvili, A.},
	author={Pick, L.},
	title={Discretization and anti-discretization of rearrangement-invariant
		norms},
	journal={Publ. Mat.},
	volume={47},
	date={2003},
	number={2},
	pages={311--358},
	issn={0214-1493},
	review={\MR{2006487}},
}

\bib{gp_2006}{article}{
	author={Gogatishvili, A.},
	author={Pick, L.},
	title={Embeddings and duality theorems for weak classical Lorentz spaces},
	journal={Canad. Math. Bull.},
	volume={49},
	date={2006},
	number={1},
	pages={82--95},
	issn={0008-4395},
	review={\MR{2198721}},
}

\bib{ghs_1996}{article}{
	author={Goldman, M.L.},
	author={Heinig, H.P.},
	author={Stepanov, V.D.},
	title={On the principle of duality in Lorentz spaces},
	journal={Canad. J. Math.},
	volume={48},
	date={1996},
	number={5},
	pages={959--979},
	issn={0008-414X},
	review={\MR{1414066}},
}

\bib{grosse}{book}{
    author={Grosse-Erdmann, K.-G.},
    title={The blocking technique, weighted mean operators and Hardy's
        inequality},
    series={Lecture Notes in Mathematics},
    volume={1679},
    publisher={Springer-Verlag, Berlin},
    date={1998},
    pages={x+114},
    isbn={3-540-63902-0},
    review={\MR{1611898 (99d:26024)}},
}


\bib{jagers}{article}{
    author={Jagers, A.A.},
    title={A note on Ces\`aro sequence spaces},
    journal={Nieuw Arch. Wisk. (3)},
    volume={22},
    date={1974},
    pages={113--124},
    issn={0028-9825},
    review={\MR{0348444 (50 \#942)}},
}


\bib{kamkub}{article}{
    author={Kami{\'n}ska, A.},
    author={Kubiak, D.},
    title={On the dual of Ces\`aro function space},
    journal={Nonlinear Anal.},
    volume={75},
    date={2012},
    number={5},
    pages={2760--2773},
    issn={0362-546X},
    review={\MR{2878472 (2012m:46034)}},
    doi={10.1016/j.na.2011.11.019},
}

\bib{Leibowitz}{article}{
	author={Leibowitz, G.M.},
	title={A note on the Ces\`aro sequence spaces},
	journal={Tamkang J. Math.},
	volume={2},
	date={1971},
	pages={151--157},
}




\bib{shiue1}{article}{
	author={Shiue, J.-S.},
	title={On the Ces\`aro sequence spaces},
	journal={Tamkang J. Math.},
	volume={1},
	date={1970},
	number={1},
	pages={19--25},
}

\bib{shiue}{article}{
    author={Shiue, J.-S.},
    title={A note on Ces\`aro function space},
    journal={Tamkang J. Math.},
    volume={1},
    date={1970},
    number={2},
    pages={91--95},
    issn={0049-2930},
    review={\MR{0276751 (43 \#2491)}},
}

\bib{syzhanglee}{article}{
    author={Sy, P.W.},
    author={Zhang, W.Y.},
    author={Lee, P.Y.},
    title={The dual of Ces\`aro function spaces},
    language={English, with Serbo-Croatian summary},
    journal={Glas. Mat. Ser. III},
    volume={22(42)},
    date={1987},
    number={1},
    pages={103--112},
    issn={0017-095X},
    review={\MR{940098 (89g:46059)}},
}


\end{biblist}
\end{bibdiv}

\

Amiran Gogatishvili \\
Institute of Mathematics, Academy of Sciences of the Czech Republic,    \v Zitn\'a~25,  115~67 Praha~1, Czech Republic \\
E-mail: gogatish@math.cas.cz \\

Rza Mustafayev\\
Department of Mathematics, Faculty of Science, Karamanoglu Mehmetbey University, Karaman, 70100, Turkey \\
E-mail: rzamustafayev@gmail.com\\

Tugce {\"U}nver \\
Department of Mathematics, Faculty of Science and Arts, Kirikkale
University, 71450 Yahsihan, Kirikkale, Turkey\\
E-mail: tugceunver@gmail.com \\

\end{document}